\newtheorem{theorem}{Theorem}
\newtheorem{lemma}[theorem]{Lemma}
\newtheorem{corollary}[theorem]{Corollary}
\newtheorem{conjecture}[theorem]{Conjecture}
\theoremstyle{definition}
\newtheorem{definition}[theorem]{Definition}
\newtheorem{remark}[theorem]{Remark}
\newtheorem{example}[theorem]{Example}
\newtheorem{question}[theorem]{Question}
\def \kbar {\bar{\kappa}}
\def \O {\mathcal{O}}
\DeclareMathOperator{\Supp}{Supp}
\DeclareMathOperator{\Pic}{Pic}
\def\pp{\mathbb{P}}
\def\qq{\mathbb{Q}}
\def\zz{\mathbb{Z}}
\def\trunloc{\lambda_v^{(1)}}
\def\isom{\cong}
\def\vec#1{\overrightarrow{#1}}
\DeclareMathOperator{\id}{id}
\renewcommand{\theenumi}{(\alph{enumi})}
\begin{document}

\author{Aaron Levin\thanks{Supported in part by NSF grant DMS-1102563.} \and Yu Yasufuku\thanks{Supported in part by JSPS
Grant-in-Aid 15K17522 and by Nihon University College of Science and Technology Grant-in-Aid for Fundamental Science Research.}}

\title{Integral points and orbits of endomorphisms on the projective plane}
\date{}
\maketitle

\begin{abstract}
We analyze when integral points on the complement of a finite union of curves in  $\pp^2$ are potentially dense.  We divide the analysis of these affine surfaces based on their logarithmic Kodaira dimension $\kbar$.
When $\kbar = -\infty$, we completely characterize the potential density of integral points in terms of the number of irreducible components on the surface at infinity and the number of multiple members in a pencil naturally associated to the surface.  When integral points are not potentially dense, we show that they lie on finitely many effectively computable curves.   When $\kbar = 0$, we prove that integral points are always potentially dense.  The bulk of our analysis concerns the subtle case of $\kbar=1$.  We determine the potential density of integral points in a number of cases and develop tools for studying integral points on surfaces fibered over a curve.   Finally, nondensity of integral points in the case $\kbar=2$ is predicted by the Lang-Vojta conjecture, to which we have nothing new to add.

In a related direction, we study integral points in orbits under endomorphisms of $\mathbb{P}^2$.  Assuming the Lang--Vojta conjecture, we prove that an orbit under an endomorphism $\phi$ of $\mathbb{P}^2$ can contain a Zariski-dense set of integral points (with respect to some nontrivial effective divisor) only if there is a nontrivial completely invariant proper Zariski-closed set with respect to $\phi$.  This may be viewed as a generalization of a result of Silverman on integral points in orbits of rational functions. We provide many specific examples, and end with some open problems.\\

\noindent Mathematics Subject Classification (2010): 14J20, 14R05, 14G40, 11G35, 37P55
\end{abstract}

\section{Introduction}

In this paper we will study integral points on affine open subsets of $\mathbb{P}^2$, and the more specific problem of integral points that lie in an orbit of an endomorphism of $\mathbb{P}^2$.  Our analyses are based on structure theorems for affine surfaces classified by their logarithmic Kodaira dimension.

For curves, there are two fundamental results characterizing the finiteness of integral and rational points.  Siegel's theorem asserts that an affine curve $C\subset\mathbb{A}^N$ over a number field $k$ has only finitely many integral points if either the curve $C$ has positive genus or the curve $C$ is rational and has more than two points at infinity.  For curves of genus $g\geq 2$, Siegel's theorem is superseded by Faltings's theorem that the set of rational points $C(k)$ is finite.  Both theorems may be unified into the single statement that a curve of log general type has only finitely many integral points (note that for a projective curve, integral points are the same as rational points).  In higher dimensions, the 
Lang--Vojta conjecture predicts that this unified statement continues to hold, with finiteness replaced by Zariski non-density:

\begin{conjecture}[Lang--Vojta]
\label{VC}
Let $V$ be a variety defined over a number field $k$ and let $S$ be a finite set of places of $k$ containing the archimedean places.  If $V$ is of log general type, then any set of $S$-integral points on $V$ is not Zariski dense in $V$.
\end{conjecture}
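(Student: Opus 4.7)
The final statement is the Lang--Vojta conjecture itself, which is a major open problem whose full resolution is not currently in sight; the paper explicitly says it has nothing new to contribute toward it. So rather than pretend to sketch a proof, let me outline the standard strategy one would pursue and indicate where the unavoidable obstacles lie.

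The plan is to reduce to a clean geometric setup and then invoke a height inequality of Vojta type. First, using Hironaka's resolution of singularities and a standard enlargement of $S$, I would replace $V$ by a smooth quasi-projective model and then choose a smooth projective compactification $\str{V}$ such that the boundary $D = \str{V} \setminus V$ is a reduced simple normal crossings divisor. The log general type hypothesis then says exactly that $K_{\str{V}} + D$ is a big divisor on $\str{V}$. After a further finite étale cover one can even assume, by a Kawamata--Viehweg--type argument, that $K_{\str{V}} + D$ is of the form (big and nef) $+$ (effective), which is the geometric input one wants when applying the main arithmetic tool.

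The arithmetic input I would then invoke is Vojta's main conjecture on heights: for any ample divisor $A$ on $\str{V}$ and any $\varepsilon > 0$, there is a proper Zariski-closed $Z \subsetneq \str{V}$ and a constant $C$ such that
\[
h_{K_{\str{V}} + D}(P) \leq \varepsilon\, h_A(P) + C
\]
for every $P \in \str{V}(k) \setminus Z$ that is $(D,S)$-integral. Combining this with the bigness of $K_{\str{V}} + D$ (which forces $h_{K_{\str{V}} + D}$ to dominate $h_A$ up to $O(1)$ on a nonempty Zariski open set) one immediately obtains that the integral points outside $Z$ form a set of bounded height, hence finite by Northcott, so the full set of integral points lies in $Z \cup (\text{finite set})$, which is not Zariski dense. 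An iterated application to each positive-dimensional component of $Z$ (which is itself quasi-projective and, by the general expectation, contains a dense open of log general type or is handled inductively) then yields nondensity.

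The main obstacle is, of course, that Vojta's height inequality is itself essentially equivalent to the conjecture one is trying to prove; it is known in very few cases (subvarieties of abelian and semiabelian varieties by Faltings--Vojta, complements of sufficiently many components of divisors in $\pp^n$ via the Schmidt subspace theorem and its Corvaja--Zannier refinements, function field analogues, etc.). A more modest fallback I would attempt in specific examples is to fiber $V$ over a curve or abelian variety and apply the known cases on the base and on generic fibers, combined with the Chevalley--Weil theorem to pass through étale covers that amplify the boundary until the unibranch components at infinity are numerous enough for the subspace theorem to bite. This is precisely the type of fibration analysis the paper develops for $\kbar = 1$, and in the $\kbar = 2$ regime it appears to be the only presently feasible route toward partial results, with a general unconditional proof remaining out of reach.
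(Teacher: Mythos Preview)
Your assessment is correct: the statement is a \emph{conjecture}, not a theorem, and the paper offers no proof of it whatsoever. The paper explicitly says in the abstract and introduction that for $\kbar=2$ ``nondensity of integral points \ldots\ is predicted by the Lang--Vojta conjecture, to which we have nothing new to add,'' and throughout the paper Conjecture~\ref{VC} is used only as a hypothesis (e.g., in Theorem~\ref{mtheorem}). So there is nothing to compare your proposal against.

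Your outline of the conditional deduction from Vojta's height conjecture is the standard one and is accurate as far as it goes; as you note, that reduction is essentially tautological since Vojta's height inequality is the deeper input. One small caution: the inductive step you sketch---applying the conjecture again to each positive-dimensional component of the exceptional set $Z$---presupposes that those components are themselves of log general type, which is not automatic and is part of the content of Lang's stronger ``exceptional set'' conjectures rather than of Conjecture~\ref{VC} as stated. But since no proof is expected here, this is a minor point of exposition rather than a genuine gap.
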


This conjecture is a consequence of a much more general height inequality conjectured by Vojta \cite[Conjecture 3.4.3]{vojta}.  When $V$ is a projective surface the conjecture was formulated by Bombieri.

In higher dimensions, outside of some important special cases (e.g., subvarieties of semiabelian varieties \cite{Fal, Fal2, vojta_semiabel1, vojta_semiabel2}), not much is known towards Conjecture \ref{VC}.  For instance, if $V=\mathbb{P}^2\setminus D$, where $D$ is a nonsingular plane curve, then $V$ is of log general type if and only if $\deg D\geq 4$.  However, there is not a single such $V$ for which Conjecture \ref{VC} is known for all applicable $k$ and $S$.  In view of this, in studying integral points on affine subsets of $\mathbb{P}^2$ we will often work under the assumption of Conjecture \ref{VC}.

Suppose now that $V$ is an affine surface given as the complement in $\mathbb{P}^2$ of a (possibly reducible) curve $D$ defined over a number field $k$. A basic invariant of $V$ is the log Kodaira dimension $\bar{\kappa}(V)$,
whose definition will be recalled in Section 2.  Here we just note that
\color{black}
$\bar{\kappa}(V)\in \{-\infty,0,1,2\}$ and $\bar{\kappa}(V)=2$ if and only if $V$ is of log general type.  If $V=\mathbb{P}^2\setminus D$, where $D$ is a normal crossings divisor, then
\begin{align*}
\bar{\kappa}(V)=
\begin{cases}
-\infty, \quad & \text{ if }\deg D=1,2\\
0, \quad & \text{ if }\deg D=3\\
2, \quad & \text{ if }\deg D\geq 4.
\end{cases}
\end{align*}

However, if $D$ is not a normal crossings divisor, then computing $\bar{\kappa}(V)$ is more subtle.  A first goal of this paper is to understand integral points on $V$ via the invariant $\bar{\kappa}(V)$.  If $\bar{\kappa}(V)=2$ then, as discussed, this is accomplished (conjecturally) by Lang--Vojta's conjecture.  To avoid situations where the lack of integral points is caused by special arithmetic properties of certain number fields or certain sets of primes, we analyze \textit{potential density} of integral points, namely whether there exist a number field $L\supset k$ and a finite set of places $S$ of $L$ for which $V(\O_{L,S})$ is Zariski-dense.

When $\bar{\kappa}(V)=-\infty$, we prove the following result using structure theory for affine surfaces:
\begin{theorem}\label{kbar_infty}
Let $D$ be an effective divisor on $\mathbb{P}^2$ defined over $\overline{\mathbb{Q}}$ and let $V=\mathbb{P}^2\setminus D$. Suppose that $\bar{\kappa}(V)=-\infty$.  Let $\Lambda$ be the associated pencil of Miyanishi and Sugie (Theorem \ref{kojima_infty}).  Let $D'$ be the union of $D$ and the multiple members of $\Lambda$.  Let $r$ be the number of irreducible components of $D'$.  Then integral points on $V$ are potentially dense if and only if $r\leq 2$.  Moreover, if $r\geq 3$, then for any finite set of places $S$ of $k$ containing the archimedean places, $V(\O_{k,S})$ is contained in the union of finitely many effectively computable curves.
\end{theorem}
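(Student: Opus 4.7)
The plan is to exploit the $\mathbb{A}^1$-fibration on $V$ produced by the Miyanishi--Sugie/Kojima pencil $\Lambda$ (Theorem \ref{kojima_infty}). Resolving the base points of $\Lambda$ gives a morphism $\pi\colon X\to\pp^1$ from a smooth projective surface $X$ birational to $\pp^2$, with general fibre $\pp^1$. By the structure theorem together with $\kbar(V)=-\infty$, the restriction of $\pi$ to $V$ minus the multiple members of $\Lambda$ is an $\mathbb{A}^1$-fibration onto an open subset $B\subset\pp^1$, whose complement $\pp^1\setminus B$ consists of the images of those fibres of $\pi$ that are either entirely contained in $D$ or are multiple members of $\Lambda$. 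The first combinatorial step is to show that the irreducible components of $D'$ are in bijection with $\pp^1\setminus B$, so that $|\pp^1\setminus B|=r$; this amounts to verifying that no two components of $D'$ lie in the same fibre of $\pi$, a property built into the specific form of the Miyanishi--Sugie pencil.

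For the density half, suppose $r\leq 2$, so that $B$ is $\mathbb{A}^1$ or $\mathbb{G}_m$ (the case $r=0$ with $B=\pp^1$ is excluded because $V$ is affine and hence $D\neq\emptyset$). After possibly extending $k$ to a number field $L$ and enlarging $S$, one arranges that $B(\O_{L,S})$ is Zariski-dense in $B$ and that the fibration $\pi$ admits an $S$-integral section over some nonempty affine open $U\subset B$. Using the translation action on each $\mathbb{A}^1$-fibre, one can then shift the chosen section by $\O_{L,S}$-valued regular functions on $U$ to obtain a Zariski-dense set of $\O_{L,S}$-integral points of $V$; this is the standard construction of integral points on an $\mathbb{A}^1$-bundle whose base has Zariski-dense integral points.

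For the non-density half, suppose $r\geq 3$. Then $B\subset\pp^1$ is the complement of at least three geometric points, and for any finite set of places $S$ of $k$ (enlarged as necessary to absorb the integral model of the pencil), Siegel's theorem in its effective Baker-type form for the $S$-unit equation gives that $B(\O_{k,S})$ is finite and effectively computable. Any $S$-integral point $P\in V(\O_{k,S})$ projects to an integral point $\pi(P)\in B(\O_{k,S})$, hence lies on the member $\pi^{-1}(\pi(P))$ of the pencil $\Lambda$, viewed as a curve in $\pp^2$; as $\pi(P)$ ranges over the finite set $B(\O_{k,S})$, one obtains the desired finite union of effectively computable curves in $\pp^2$ containing $V(\O_{k,S})$. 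The principal obstacle throughout is the combinatorial identity $|\pp^1\setminus B|=r$, whose proof requires unpacking the structure of the Miyanishi--Sugie pencil---in particular that distinct components of $D'$ lie in distinct fibres---and a related subtle point is the controlled enlargement of $S$ needed to transfer $S$-integrality from $V$ to $B$ and to realise $\pi$ as an honest $\mathbb{A}^1$-bundle over the integral model of $B$.
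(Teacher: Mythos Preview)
Your density argument for $r\leq 2$ is correct in spirit, though the paper gets there more directly: Kojima's structure theorem (Theorem~\ref{kojima_infty}(e)) already gives an isomorphism $\mathbb{P}^2\setminus D'\cong \mathbb{P}^2\setminus\{r\text{ lines through a point}\}$, which for $r\leq 2$ is $\mathbb{A}^2$ or $\mathbb{G}_m\times\mathbb{A}^1$, and density of integral points on these is immediate.

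There is, however, a genuine gap in your non-density argument. You assert that an $S$-integral point $P\in V(\O_{k,S})$ projects to an integral point $\pi(P)\in B(\O_{k,S})$, where $B$ is the complement in $\mathbb{P}^1$ of the images of the components of $D'$. This fails precisely when $D'$ contains multiple members of $\Lambda$ that are \emph{not} components of $D$. Concretely, suppose $D$ is irreducible and $\Lambda$ has two multiple members $aF$ and $bG$ with $F,G\not\subset\Supp D$ (case~(a) in the paper's proof, with $\gcd(a,b)=1$ forced by Theorem~\ref{kojima_infty}(b)). Then $r=3$, but $D$-integrality of $P$ imposes no constraint on the $v$-adic distance from $P$ to $F$ or $G$; hence $\pi(P)$ is integral only with respect to the single point $\pi(D)$, i.e.\ $\pi(P)\in\mathbb{A}^1(\O_{k,S})$, which is certainly not finite. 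No enlargement of $S$ repairs this.

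What survives is that the multiplicity $a$ forces $\lambda_v(\pi(F),\pi(P))$ to be (up to bounded error) $a$ times an integral local height, so $\pi(P)$ is an integral point on the $M$-curve $(\mathbb{P}^1;\pi(D),\infty;\pi(F),a;\pi(G),b)$ in Darmon's sense. The paper packages this via the gcd-weight of $(D,\Lambda)$: one checks in each of the three cases (one, two, or $\geq 3$ components of $D$) that the gcd-weight exceeds $2$, and then invokes Theorem~\ref{pencildegen}(c), whose proof rests on effective finiteness for unit equations and superelliptic equations (Theorem~\ref{effectivetheorem}, essentially Bilu). Your direct appeal to Siegel's theorem on $\mathbb{P}^1$ minus three points is only valid in the sub-case where $D$ itself already has $\geq 3$ irreducible components.
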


When $\bar{\kappa}(V)=0$ it turns out that integral points on $V$ are always potentially dense:
\begin{theorem}\label{kbar_0}
Let $D$ be an effective divisor on $\mathbb{P}^2$ defined over $\overline{\mathbb{Q}}$ and let $V=\mathbb{P}^2\setminus D$. Suppose that $\bar{\kappa}(V)=0$. Then integral points on $V$ are potentially dense.
\end{theorem}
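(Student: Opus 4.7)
The plan is to classify the pairs $(\pp^2, D)$ with $\kbar(V) = 0$ and then to exhibit a Zariski-dense set of integral points in each case. The first step is a discrepancy computation on a minimal log resolution, which forces $\deg D_{\mathrm{red}} = 3$ together with only mild singularities: in the normal crossings case this is immediate from $K_{\pp^2} + D = (\deg D - 3) H$, and each worse configuration (a cuspidal cubic, three concurrent lines, a conic tangent to a line, etc.) introduces an anti-effective correction after log resolution that forces $\kbar(V) = -\infty$ rather than $0$. This leaves, up to projective equivalence, four possibilities for $D$: (a) three lines in general position; (b) a conic and a transverse line; (c) an irreducible nodal cubic; and (d) a smooth cubic.

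Cases (a)--(c) can be settled by exhibiting either an affine group structure or a convenient fibration. In case (a), $V \cong \mathbb{G}_m^2$ and powers of any $L$-point of infinite order give Zariski-dense $S$-integral points. In case (b), projecting from one of the two intersection points realizes $V$ as fibered over an open of $\pp^1$ with generic fiber $\mathbb{G}_m$, and density follows by combining $S$-units in the fibers with $L$-rational points of the base. In case (c), a log resolution at the node identifies $V$ with the complement in $\mathbb{F}_1$ of the $(-1)$-section $C_0$ together with a smooth rational section $\tilde D \in |C_0 + 3F|$ (meeting $C_0$ in two points); the ruling of $\mathbb{F}_1$ again restricts to a fibration on $V$ with generic fiber $\mathbb{G}_m$, and the same bootstrap yields density.

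The main case is (d), $D$ a smooth cubic. Here I would choose a number field $L$ over which $D$ has positive Mordell--Weil rank; such an $L$ always exists, since any elliptic curve acquires a non-torsion point over some finite extension of its field of definition. For each $Q \in D(L)$, the tangent line $T_Q$ meets $D$ at $Q$ with multiplicity $2$ and, by the chord-tangent construction, at a third point $-2Q \in D(L)$ (except at the nine flexes, where $T_Q$ meets $D$ only at $Q$ with multiplicity $3$). Thus $T_Q \cap V$ is isomorphic over $L$ to $\mathbb{G}_m$ or to $\mathbb{A}^1$, and for any $S$ with $|S| \geq 2$ the set $(T_Q \cap V)(\O_{L,S})$ is infinite by Dirichlet's unit theorem, hence Zariski-dense in $T_Q$. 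Since $D(L)$ is Zariski-dense in $D$ and the map $Q \mapsto T_Q$ is non-constant, the lines $T_Q$ form an infinite family of distinct lines; no proper Zariski-closed subset of $V$ can contain infinitely many of them, so the union $\bigcup_{Q \in D(L)} (T_Q \cap V)(\O_{L,S})$ is Zariski-dense in $V$.

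The hard part will be the classification step: one must verify, case by case through discrepancy calculations, that no exotic non-normal-crossings configuration of $D$ supports $\kbar(V) = 0$. Once the four cases above are in hand, the arithmetic density arguments are essentially routine.
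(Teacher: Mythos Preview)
Your overall strategy is close in spirit to the paper's, but the paper bypasses both of the hard steps you identify by citing two structural results: Kojima's theorem (Theorem~\ref{kojima_0}), which says that either $D$ is a smooth cubic or $V$ contains an open subset isomorphic to $\mathbb{G}_m\times\mathbb{G}_m$, and Beukers' theorem (Theorem~\ref{thm:beukers}) for the smooth cubic case.  With these in hand the proof is two lines.  Your proposed discrepancy classification into exactly four types is plausibly correct, but it is essentially a reproof of Kojima's theorem and is considerably more work than you indicate; note in particular that Kojima's statement does not assert $\deg D_{\mathrm{red}}=3$, only that $V\supset\mathbb{G}_m^2$, so your sharper conclusion needs an extra argument.

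More seriously, your argument in case~(d) has a genuine gap.  You assert that for any $S$ with $|S|\ge 2$ the set $(T_Q\cap V)(\O_{L,S})$ is infinite ``by Dirichlet's unit theorem''.  This does not follow.  Although $T_Q\setminus D\cong\mathbb{G}_m$ over $L$, the induced integral structure coming from the embedding $T_Q\hookrightarrow\pp^2\setminus D$ is the one for which a point is $v$-integral precisely when it avoids both $Q$ and $-2Q$ modulo $v$.  Concretely, writing $T_Q\cong\pp^1$ and $Q=[a:b]$, $-2Q=[c:d]$, one needs $bX-aY$ and $dX-cY$ simultaneously to be $S$-units; if $ad-bc$ is not an $S$-unit this can have only finitely many solutions (e.g.\ over $\mathbb{Q}$ with $S=\{\infty\}$, the complement of $\{0,n\}$ in $\pp^1$ has a single $S$-integral point once $n>2$).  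The primes dividing $ad-bc$ depend on $Q$, and as $Q$ runs over an infinite Mordell--Weil group they cannot be absorbed into a single finite $S$.  This is exactly the difficulty that Beukers' theorem overcomes, and it is why the paper invokes that result rather than arguing directly; the seeding mechanism of Lemma~\ref{lem:twopts} (via a horizontal $C_0$) is unavailable here because the tangent lines to a smooth cubic do not form a linear pencil.
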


This agrees with a very general conjecture of Campana \cite[Th.~5.1, Conj.~9.20]{campana} that if $V$ is a projective variety and $\bar{\kappa}(V)=0$, then rational points on $V$ are potentially dense.

When $\bar{\kappa}(V) = 1$, the situation is more subtle.  In this case, we know from Kawamata \cite{kawamata} that there is a pencil $\Lambda$ of curves in $\pp^2$ whose restriction to $V$ yields a $\mathbb{G}_m$-fibration.  Kojima \cite{Koj2} has proved a structure theorem for this case, describing a certain open subset $\mathbb{P}^2\setminus D'\subset V$, but unlike the $\kbar = -\infty$ case, additional components in $D'$ come from \emph{singular} fibers, rather than multiple fibers. This makes it harder to retrieve Diophantine information about $D$ from that of $D'$.

In Section \ref{sec:pencilweights}, we will define two weights associated to $(D, \Lambda)$, the gcd-weight and the Campana weight.   These weights measure the multiplicities of the members of $\Lambda$, using either the gcd of the multiplicities appearing in a member of the pencil or the minimum (following Campana \cite{campana}).   Our definitions of the weights also take into account the divisor $D$ with respect to which integrality is defined.  We prove the following result relating these weights to integral points:

\begin{theorem}[cf. Theorem \ref{pencildegen}]\label{thm:intropencil}
Let $D$ be an effective divisor on $\mathbb{P}^2$ defined over a number field $k$ and let $V=\mathbb{P}^2\setminus D$.  Let $S$ be a finite set of places of $k$ containing the archimedean places.  Let $\Lambda$ be a pencil of curves on $\mathbb{P}^2$ such that the base points  of $\Lambda$ are contained in the support of $D$.
\begin{itemize}
\item[(i)] If the gcd-weight of $(D, \Lambda)$ is greater than $2$, then the $S$-integral points of $\mathbb{P}^2\setminus D$ are contained in the union of finitely many members of $\Lambda$.  Furthermore, if the support of $D$ contains the support of a member of $\Lambda$, then this finite union is effectively computable.
\item[(ii)] If the Campana weight of $(D, \Lambda)$ is greater than $2$, then the $abc$-conjecture implies that the $S$-integral points of $\mathbb{P}^2\setminus D$ are contained in the union of finitely many members of $\Lambda$.
\end{itemize}

\end{theorem}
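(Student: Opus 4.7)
Since the base points of $\Lambda$ lie in $\Supp(D)$, the rational map $\pi \colon \pp^2 \dashrightarrow \pp^1$ defined by $\Lambda$ restricts to a morphism $\pi \colon V \to \pp^1$. It suffices to show that $\pi(V(\O_{k,S}))$ is a finite set, since then $V(\O_{k,S})$ lies in finitely many fibers of $\pi$, each of which is a member of $\Lambda$. For each $a \in \pp^1(\bar k)$ write $\pi^*(a) = \sum_j n_{a,j} C_{a,j}$, set $I_a = \{j : C_{a,j} \not\subset \Supp(D)\}$, and let $m_a = \gcd_{j \in I_a} n_{a,j}$ and $\mu_a = \min_{j \in I_a} n_{a,j}$, with the convention that $m_a = \mu_a = \infty$ when $I_a = \emptyset$.

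The heart of the argument is a valuation estimate carried out on a resolution of $\pi$: for $P \in V(\O_{k,S})$ and $v \notin S$ with $v(\pi(P) - a) > 0$, the reduction $\bar P$ of $P$ modulo $v$ lies on $\pi^{-1}(a)$, and integrality on $V$ forces $\bar P \notin \Supp(D)$, so $\bar P$ lies on some $C_{a,j}$ with $j \in I_a$ (possibly at a crossing of two such components). A local coordinate computation then yields $v(\pi(P) - a) = \sum_j n_{a,j} e_j$ for some nonnegative integers $e_j$ (indexed by $j \in I_a$ with $\bar P \in C_{a,j}$), not all zero. Consequently $v(\pi(P) - a)$ is divisible by $m_a$ and is at least $\mu_a$ whenever it is positive. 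When $I_a = \emptyset$, no such $\bar P$ exists, so $\pi(P) - a$ is an $S$-unit.

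These conditions turn $\pi(P)$ into a ``gcd-Campana'' integral point (part (i), with multiplicities $m_a$) or a Campana integral point (part (ii), with multiplicities $\mu_a$) of the pair $(\pp^1, \Delta)$, where $\Delta = \sum_a (1 - 1/w_a)[a]$. The weight hypothesis translates precisely to $\deg(K_{\pp^1} + \Delta) > 0$, i.e., this Campana orbifold on $\pp^1$ is of log general type. For part (i), the gcd divisibility lifts $\pi(P)$ (after a finite extension of $k$ and $S$) to an $S$-integral point on a ramified cover $X \to \pp^1$ branched over each $a$ with index $m_a$; the weight hypothesis ensures that $X$ minus the preimages of $\Supp(\Delta)$ is of log general type, and Siegel's theorem on $X$ then gives finiteness of $\pi(P)$. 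For part (ii), the analogous finiteness of Campana integral points on a log general type Campana orbifold on $\pp^1$ is a known consequence of the $abc$ conjecture. The effective assertion in (i), assuming $\Supp(D)$ contains $\Supp(F_0)$ for some member $F_0$ of $\Lambda$, is obtained by using $F_0$ as the denominator of $\pi$: then $\pi(P)$ is an honest $S$-integral point of a punctured $\pp^1$, and effective Siegel (Baker's method) on the cover $X$ applies.

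The main obstacle is establishing the valuation estimate at points where several components of $F_a$ cross, which requires resolving $\pi$ to a morphism from a smooth surface and tracking multiplicities on the exceptional locus; this is tractable because the gcd and min statements are preserved under blow-ups, since any new component appearing in the total transform of $F_a$ has multiplicity in the $\zz_{\geq 0}$-span of the original $n_{a,j}$'s. A secondary technical point is realizing the cover $X$ in part (i) over a number field, but this costs at worst a harmless finite extension of $k$ and $S$.
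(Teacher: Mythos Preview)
Your plan is essentially the paper's own argument. The paper resolves $\pi$ to a morphism $\tilde\phi\colon V'\to\pp^1$ and observes (its Lemma~14) that the Campana and gcd weights of $(D,\Lambda)$ agree with those of $(\pi^*D,\tilde\phi)$, since the exceptional divisors over base points lie in $\pi^*D$ and are therefore excluded---exactly your ``the gcd and min statements are preserved under blow-ups'' observation. For (ii) the paper runs the truncated-height chain of inequalities coming from the general $abc$ conjecture directly (its Theorem~12), which is precisely the ``Campana integral points on a log general type orbifold over $\pp^1$ are finite under $abc$'' statement you cite as known. For the unconditional part of (i) the paper invokes Darmon--Granville's $M$-curve theorem, which is the cover-and-Siegel argument you sketch.

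One point to tighten: your effectivity step says ``effective Siegel (Baker's method) on the cover $X$ applies,'' but effective Siegel is not known for arbitrary curves of genus $\geq 2$, and a cover $X\to\pp^1$ branched to the prescribed indices may well have large genus. The paper (following Bilu) does not invoke effective Siegel on a single abstract cover; instead it uses the hypothesis that one multiplicity is $\infty$ to normalize $\pi$ so that $\pi(P)\in\O_{k,S}$, and then splits into four explicit cases---three or more $\infty$'s (unit equations), two $\infty$'s plus one finite multiplicity (reduces to unit equations), one $\infty$ plus a multiplicity $\geq 3$ (superelliptic $x^m-y^n=1$), and one $\infty$ plus three multiplicities equal to $2$ (a pair of Pell-type equations defining an elliptic curve). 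Each of these has an effective finiteness theorem via Baker. Your plan should either cite Bilu's theorem directly or reproduce this case split; ``Baker's method on $X$'' alone is not enough.
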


Theorem \ref{thm:intropencil} (i) will be used in the proof of Theorem \ref{kbar_infty}.  In the $\kbar = 1$ case, we will use both parts of Theorem \ref{thm:intropencil}, but mainly (ii).   Among our results in this case, we just mention the following result here:

\begin{theorem}[cf. Theorems \ref{thm:bicuspidal} and \ref{thm:unicuspidal}]\label{kbar_1}
Let $D$ be a rational curve defined over $\overline{\mathbb{Q}}$ having just cusps as singularities, and let $V=\mathbb{P}^2\setminus D$. Suppose that $\bar{\kappa}(V)=1$.
 Then assuming the $abc$ conjecture, integral points on $V$ are potentially dense only if
    $D$ is projectively equivalent to one of the five types of curves listed in Theorems \ref{thm:bicuspidal} \eqref{bicuspidali} and
        \ref{thm:unicuspidal}.
\end{theorem}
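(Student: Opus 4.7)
The plan is to combine the classification of rational cuspidal plane curves $D$ with $\kbar(\pp^2\setminus D)=1$ (recorded in Theorems~\ref{thm:bicuspidal} and \ref{thm:unicuspidal}) with the Campana-weight criterion of Theorem~\ref{thm:intropencil}(ii), proving the contrapositive of the stated implication.

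First, by Kawamata's theorem (and the subsequent analysis of Kojima~\cite{Koj2}) the assumption $\kbar(V)=1$ gives a pencil $\Lambda$ of curves on $\pp^2$ whose restriction to $V=\pp^2\setminus D$ is a $\mathbb{G}_m$-fibration.  The induced morphism $V\to\pp^1$ extends across every point of $V$, so the base locus of $\Lambda$ is contained in $\Supp(D)$, and hence $(D,\Lambda)$ satisfies the hypothesis of Theorem~\ref{thm:intropencil}.  Next, invoking the classification of rational cuspidal plane curves whose complement has $\kbar=1$ summarized in Theorems~\ref{thm:bicuspidal} and \ref{thm:unicuspidal}, one obtains an explicit finite list of such $D$, each unicuspidal or bicuspidal, and for each member of the list the pencil $\Lambda$ can be read off the minimal embedded resolution of $D$.

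With this enumeration in hand, the heart of the argument is a case-by-case computation: for every family on the list that is \emph{not} projectively equivalent to one of the five exceptional types of Theorems~\ref{thm:bicuspidal}\eqref{bicuspidali} and \ref{thm:unicuspidal}, I would show that the Campana weight of $(D,\Lambda)$ exceeds~$2$.  Granted this, Theorem~\ref{thm:intropencil}(ii), applied over an arbitrary number field $L\supset k$ and arbitrary finite set of places $S$ of $L$ containing the archimedean ones, gives (under the $abc$ conjecture) that the $S$-integral points of $V$ lie in the union of finitely many members of $\Lambda$.  Since these members are curves in the surface $V$, any such set of integral points is confined to a proper Zariski-closed subset of $V$, and is therefore not Zariski dense; as this holds for every choice of $(L,S)$, integral points on $V$ are not potentially dense.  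This contradicts the hypothesis of potential density, forcing $D$ to be one of the five listed types.

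The main obstacle is the explicit Campana-weight computation in each case.  The weight depends on the multiplicities appearing in each reducible/multiple member of $\Lambda$ and on how these members intersect $\Supp(D)$, so extracting it requires careful bookkeeping of the exceptional divisors and the multiplicities of the fibers produced by the $\mathbb{G}_m$-fibration attached to each cuspidal curve.  The five exceptional types should emerge precisely as those curves for which the Campana weight fails to exceed~$2$ and the criterion of Theorem~\ref{thm:intropencil}(ii) is therefore inconclusive; verifying that the classification list cuts cleanly at this threshold, rather than leaving additional families where the criterion is silent, is where the bulk of the work will lie.
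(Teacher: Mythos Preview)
Your approach is essentially the paper's: enumerate the curves via Tono's classification (Theorems~\ref{tonobicusp} and \ref{Tonounicuspidal}), construct the pencil $\Lambda$ explicitly from the polynomial generators $F_1,F_2$ (or their unicuspidal analogues) appearing in those equations rather than invoking Kawamata's abstract pencil, and then check case by case that the Campana weight of $(D,\Lambda)$ exceeds~$2$ outside the five exceptional families, whereupon Theorem~\ref{pencildegen} applies. One refinement in the paper worth noting: in the unicuspidal case the \emph{gcd}-weight already exceeds~$2$ in every family except $n=s=2$, so Theorem~\ref{thm:unicuspidal} is proved unconditionally via Theorem~\ref{pencildegen}(b), and the $abc$ conjecture is needed only for the bicuspidal analysis.
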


We also analyze potential density of integral points for other varieties $V$ with $\kbar(V) = 1$, such as when $\deg D$ is small or when $D$ consists of a line and an irreducible curve.  Furthermore, in many cases of Theorem \ref{kbar_1}, we can actually construct Zariski-dense sets of integral points, providing a partial converse to the theorem. We will defer the precise statements to Theorems \ref{thm:bicuspidal}--\ref{thm:linecurve} and the examples in Section \ref{sec:examples}.

Our proofs of Theorems \ref{kbar_infty}--\ref{kbar_1} combine results from affine algebraic geometry with results from Diophantine analysis. From geometry, key ingredients in our proofs include the structure theory of surfaces $V=\mathbb{P}^2\setminus D$ with $\bar{\kappa}(V)=-\infty$ (Miyanishi-Sugie \cite{miya_sugie}) and  $\bar{\kappa}(V)\leq 1$ (Kojima \cite{Koj2}). For Theorem \ref{kbar_1}, another key result is the classification of cuspidal plane curves whose complements have $\kbar = 1$, due to Tono \cite{tonothesis, tono2}.  From Diophantine analysis, we use various results including Darmon-Granville's results \cite{DG}, unit equations, and Baker's theory to prove Theorem \ref{thm:intropencil}.

We now discuss integral points in an orbit under an endomorphism of $\mathbb{P}^2$.  Here we denote by $\phi^n$ the $n$-th iterate of $\phi$, and by $\O_{\phi}(\alpha)$ the orbit $$\{\alpha, \phi(\alpha), \phi^2(\alpha),\ldots\}$$of $\alpha$. Our starting point is Silverman's theorem on integral points in orbits of rational functions.

\begin{theorem}[Silverman \cite{sil}]
Let $\phi(z)\in k(z)$ be a rational function of degree $d\geq 2$, over a number field $k$, with the property that $\phi^2(z)$ is not a polynomial.  Let $S$ be a finite set of places of $k$ containing the archimedean places.  Then for any $\alpha\in k$, the orbit $\O_{\phi}(\alpha)$ contains only finitely many $S$-integral points.
\end{theorem}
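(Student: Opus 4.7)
The plan is to reduce the statement to Siegel's theorem on $\pp^{1}$. The key geometric input I would establish is the following.

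\emph{Claim.} There exists $n\geq 1$ such that $(\phi^{n})^{-1}(\infty)\subset\pp^{1}(\overline{k})$ contains at least three distinct points.

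The hypothesis that $\phi^{2}$ is not a polynomial gives $|(\phi^{2})^{-1}(\infty)|\geq 2$. Arguing by contradiction, suppose $|(\phi^{n})^{-1}(\infty)|\leq 2$ for every $n\geq 1$. Tracking preimages through $\phi$ forces every point of $(\phi^{n})^{-1}(\infty)$, for $n$ large enough that the cardinality has stabilized, to be totally ramified under $\phi$; since Riemann--Hurwitz allows at most two totally ramified points for a degree-$d$ map on $\pp^{1}$, the sets $(\phi^{n})^{-1}(\infty)$ eventually stabilize to a $\phi$-invariant pair. Up to a $\overline{k}$-M\"obius conjugation this places $\phi$ in one of the normal forms $z\mapsto cz^{\pm d}$, with $\infty$ mapped by the conjugation into the exceptional set $\{0,\infty\}$. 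A direct computation in each normal form shows that $\phi^{2}$ is then a polynomial, contradicting the hypothesis.

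Granted the claim, fix such an $n$ and set $D:=(\phi^{n})^{-1}(\infty)$. Enlarge $k$ to a number field $L$ over which every point of $D$ is rational, and enlarge $S$ to a finite set $S'$ of places of $L$ containing all places above $S$. Siegel's theorem applied to $\pp^{1}\setminus D$ gives that $(\pp^{1}\setminus D)(\O_{L,S'})$ is finite. Next I would invoke the functoriality of Weil functions for the morphism $\phi^{n}\colon\pp^{1}\to\pp^{1}$: for every place $v$ of $L$ and every $Q\in\pp^{1}(L)\setminus D$,
\[
\lambda_{v,\infty}\bigl(\phi^{n}(Q)\bigr)=\sum_{P\in D}e_{P}\,\lambda_{v,P}(Q)+O_{v}(1),
\]
with ramification indices $e_{P}\geq 1$ and with the $O_{v}(1)$ uniformly bounded in $v$ and $Q$. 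If $\phi^{N}(\alpha)$ is $S$-integral for some $N\geq n$, the left-hand side is uniformly bounded at all $v\notin S'$; since every $e_{P}$ is positive and the $\lambda_{v,P}$ are nonnegative at non-archimedean $v$ (up to bounded error), each term $\lambda_{v,P}(\phi^{N-n}(\alpha))$ is uniformly bounded as well. After enlarging $S'$ once more to absorb the uniform constants, $\phi^{N-n}(\alpha)$ is an $(S',D)$-integral point of $\pp^{1}\setminus D$ and therefore lies in the finite set supplied by Siegel.

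To finish: either only finitely many $N$ produce an $S$-integral $\phi^{N}(\alpha)$ and the statement is immediate, or infinitely many do, in which case the pigeonhole forces $\phi^{N_{1}-n}(\alpha)=\phi^{N_{2}-n}(\alpha)$ for some $N_{1}<N_{2}$; then $\alpha$ is $\phi$-preperiodic and $\O_{\phi}(\alpha)$ is itself finite. The main obstacle in this plan is the Claim, which is the only place where the dynamical hypothesis ``$\phi^{2}$ is not a polynomial'' is actually used; once the Claim is secured, the remainder is a standard combination of Siegel's theorem with the functorial behavior of Weil functions under morphisms.
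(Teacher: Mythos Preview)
The paper does not prove this theorem; it is quoted with attribution to Silverman \cite{sil} and used only as motivation for Question~\ref{q:comp_inv} and Theorem~\ref{thm:orbit}. So there is no ``paper's own proof'' to compare against.

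Your argument is correct and is essentially Silverman's original proof. The Claim is the standard exceptional-set analysis: once the cardinalities $|(\phi^n)^{-1}(\infty)|$ stabilize at $2$, the points of the stabilized set are totally ramified, Riemann--Hurwitz forces them to coincide with the (at most two) totally ramified points, and complete invariance of this pair follows; conjugating to $\{0,\infty\}$ yields $\phi(z)=cz^{\pm d}$, whence $\phi^2$ is a polynomial. One small wording issue: you say the points of the stabilized set are ``totally ramified under $\phi$'', but what you actually use (and what holds) is that the points of $(\phi^{n+1})^{-1}(\infty)$ are totally ramified \emph{points}; this is what Riemann--Hurwitz bounds. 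The deduction that $\infty$ lies in the exceptional pair is correct, and the case check that $\phi^2$ is a polynomial in both normal forms is routine.

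The second half is exactly the pullback argument encoded in the paper's Lemma~\ref{pblemma}: functoriality of local heights under $\phi^n$ transfers $(\infty,S)$-integrality of $\phi^N(\alpha)$ to $(D,S')$-integrality of $\phi^{N-n}(\alpha)$, and Siegel on $\mathbb{P}^1\setminus D$ (genus $0$, $\geq 3$ punctures) finishes. Your dichotomy at the end (finitely many integral iterates, or preperiodicity) is clean.
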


If $\phi^2(z)\in k[z]$, then for an appropriate choice of $S$ and $\alpha$, $\O_{\phi}(\alpha)$ will contain infinitely many $S$-integral points.  Note also that if $\phi^2(z)$ is a polynomial, then $E= \{\infty, \phi(\infty)\}$ is a \textit{completely invariant} set for $\phi$, that is, $\phi^{-1}(E) = E = \phi(E)$.  Thus Silverman's theorem easily implies:
\begin{corollary}
\label{Sil}
Let $\phi:\mathbb{P}^1\to\mathbb{P}^1$ be an endomorphism defined over a number field $k$.  Let $S$ be a finite set of places of $k$ containing the archimedean places.  Let $D$ be a nontrivial effective divisor on $\mathbb{P}^1$.  If $\O_{\phi}(P)$ contains infinitely many $S$-integral points in $(\mathbb{P}^1\setminus D)(\O_{k,S})$ for some point $P$, then there exists a nonempty completely invariant finite subset of $\mathbb{P}^1$ under $\phi$.
\end{corollary}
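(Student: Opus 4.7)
The plan is to reduce Corollary \ref{Sil} directly to Silverman's theorem. First I would reduce to the case where integrality is measured against a single $k$-rational point. Pick any point $Q$ in the support of $D$; after replacing $k$ by a finite extension and $S$ by the places of that extension above $S$, we may assume $Q\in\mathbb{P}^1(k)$, since this does not affect whether a completely invariant subset of $\mathbb{P}^1(\bar{k})$ for $\phi$ exists. Any $S$-integral point of $\mathbb{P}^1\setminus D$ is a fortiori $S$-integral with respect to $\{Q\}$, so the hypothesis of infinitely many integral points in $\mathcal{O}_\phi(P)$ is preserved. Choosing a $k$-automorphism of $\mathbb{P}^1$ sending $Q$ to $\infty$ and conjugating $\phi$ by it, we may assume $Q=\infty$, so that $S$-integrality becomes membership in $\mathcal{O}_{k,S}\subset\mathbb{A}^1=\mathbb{P}^1\setminus\{\infty\}$.

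Next I would ensure that $P$ itself is $k$-rational. Since $\mathcal{O}_\phi(P)$ contains infinitely many points of $\mathcal{O}_{k,S}\subset k$, some iterate $\phi^{n_0}(P)$ lies in $k$; because $\phi$ is defined over $k$, every later iterate $\phi^n(P)$ with $n\geq n_0$ is then also in $k$. Replacing $P$ by $\phi^{n_0}(P)$, we may assume $P\in k$, and the tail of the orbit still contains infinitely many $S$-integral points. If $\deg\phi=1$, then $\phi$ is an automorphism of $\mathbb{P}^1$ and its fixed points form a nonempty, finite, completely invariant set, so the corollary is trivial; hence we may assume $\deg\phi\geq 2$.

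At this point I would invoke Silverman's theorem: if $\phi^2$ were not a polynomial in our chosen coordinate, then $\mathcal{O}_\phi(P)$ would contain only finitely many $S$-integral points, contradicting the hypothesis. Hence $\phi^2\in k[z]$, which is equivalent to $(\phi^2)^{-1}(\infty)=\{\infty\}$ as a set, i.e. $\phi^{-1}(\phi^{-1}(\infty))=\{\infty\}$. Each point of $\phi^{-1}(\infty)$ has nonempty preimage under $\phi$ forced to lie in $\{\infty\}$, so $\phi^{-1}(\infty)$ consists of a single point $a$ with $\phi^{-1}(a)=\{\infty\}$; consequently $\phi(a)=\infty$ and $\phi(\infty)=a$, and $E=\{\infty,a\}$ (collapsing to $\{\infty\}$ if $a=\infty$) is a nonempty completely invariant finite subset of $\mathbb{P}^1$ under $\phi$. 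The substantive input is Silverman's theorem; the only real obstacle is the bookkeeping in the first two paragraphs, namely reducing from integrality with respect to the divisor $D$ to integrality with respect to a single point at infinity and arranging that $P$ lies in $k$.
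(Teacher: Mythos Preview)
Your proof is correct and follows the same approach as the paper. The paper does not give a detailed proof; it simply notes in the sentence preceding the corollary that if $\phi^2$ is a polynomial then $E=\{\infty,\phi(\infty)\}$ is completely invariant, and then says Silverman's theorem ``easily implies'' the corollary---your argument fills in precisely the bookkeeping (reducing $D$ to a single point, moving that point to $\infty$, arranging $P\in k$, and handling $\deg\phi=1$) that the paper leaves implicit.
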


In view of Corollary \ref{Sil}, the second author has asked the following higher-dimensional analogue:
\begin{question}[cf. {\cite[Question 2]{yasu_taiwan}}]\label{q:comp_inv}
Let $\phi:\mathbb{P}^n\to\mathbb{P}^n$ be an endomorphism defined over a number field $k$.  Let $S$ be a finite set of places of $k$ containing the archimedean places.  Let $D$ be a nontrivial effective divisor on $\mathbb{P}^n$.  If $\O_{\phi}(P)\cap (\mathbb{P}^n\setminus D)(\O_{k,S})$ is Zariski dense in $\mathbb{P}^n$ for some point $P$, then does there exist a nonempty completely invariant proper Zariski-closed subset of $\mathbb{P}^n$ under $\phi$?
\end{question}

Assuming Lang--Vojta's conjecture, we will use our analysis of integral points on open affine subsets of $\mathbb{P}^2$ to answer this question positively.
\begin{theorem}[cf. Theorem \ref{mtheorem}]\label{thm:orbit}
Assume Lang--Vojta's conjecture.  Let $\phi:\mathbb{P}^2\to\mathbb{P}^2$ be an endomorphism of degree $>1$ defined over a number field $k$.  Let $S$ be a finite set of places of $k$ containing the archimedean places.  Let $D$ be a nontrivial effective divisor on $\mathbb{P}^2$ defined over $k$.  If $\O_{\phi}(P)\cap (\mathbb{P}^2\setminus D)(\O_{k,S})$ is Zariski dense in $\mathbb{P}^2$ for some point $P$, then there exists a nonempty completely invariant proper Zariski-closed subset of $\mathbb{P}^2$ under $\phi$.
\end{theorem}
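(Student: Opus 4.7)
The plan is to argue by contrapositive: assuming $\mathcal{O}_\phi(P)\cap (\mathbb{P}^2\setminus D)(\O_{k,S})$ is Zariski dense in $\mathbb{P}^2$, I will exhibit a nonempty completely invariant proper Zariski-closed subset of $\mathbb{P}^2$. The strategy is to iterate integrality backwards along $\phi$, apply Lang--Vojta at each stage, and distill an invariant configuration of components from the resulting Kodaira-dimension bound.

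First, standard functoriality of local height functions produces, for each $n\ge 0$, a finite set of places $S_n\supseteq S$ (depending only on $\phi$, $n$, and $S$) such that whenever $\phi^n(Q)$ is $S$-integral with respect to $D$, the point $Q$ itself is $S_n$-integral with respect to $\phi^{-n}(D)$. Let $R:=\{m\ge 0:\phi^m(P)\in(\mathbb{P}^2\setminus D)(\O_{k,S})\}$ and set $T_n:=\{\phi^{m-n}(P):m\in R,\ m\ge n\}$; then $T_n\subseteq(\mathbb{P}^2\setminus\phi^{-n}(D))(\O_{k,S_n})$. Because $\phi^n$ is a finite surjective morphism of $\mathbb{P}^2$, it sends proper closed subvarieties to proper closed subvarieties, so if $T_n$ were contained in some proper closed $W\subsetneq\mathbb{P}^2$ then $\phi^n(W)$ would be a proper closed subvariety containing the Zariski-dense set $\{\phi^m(P):m\in R,\ m\ge n\}$, a contradiction. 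Thus $T_n$ is Zariski dense for every $n$, and applying Lang--Vojta to $\mathbb{P}^2\setminus\phi^{-n}(D)$ forces
\[
\bar{\kappa}(\mathbb{P}^2\setminus\phi^{-n}(D))\le 1\qquad\text{for every } n\ge 0.
\]

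Set $F_n:=\Supp(\phi^{-n}(D))$ and call an irreducible curve $C\subset\mathbb{P}^2$ \emph{persistent} if it is a component of $F_n$ for infinitely many $n$; let $\mathcal{C}$ denote the set of persistent curves. From $\phi^{-1}(F_n)=F_{n+1}$ and, for $n\ge 1$, $C\subseteq F_n\Rightarrow \phi(C)\subseteq F_{n-1}$ (the latter uses that $\phi$ has finite fibres, so $\phi(C)$ is again an irreducible curve), one checks that $\mathcal{C}$ is closed under taking irreducible components of $\phi$-preimages as well as under $\phi$-images. Consequently, provided $\mathcal{C}$ is both finite and nonempty, the reduced divisor $E:=\bigcup_{C\in\mathcal{C}}C$ is a nonempty proper Zariski-closed subset of $\mathbb{P}^2$ satisfying $\phi^{-1}(E)=\phi(E)=E$, which is the required completely invariant proper closed subset.

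The main obstacle is to establish that $\bigcup_{n\ge 0}F_n$ has only finitely many distinct irreducible components; both the finiteness of $\mathcal{C}$ and (by pigeonhole, since each $F_n$ is nonempty because $D$ is nontrivial) its nonemptiness then follow. For this one combines the Kodaira-dimension bound $\bar{\kappa}(\mathbb{P}^2\setminus F_n)\le 1$ with the structure theorems used in Theorems \ref{kbar_infty} and \ref{kbar_1}: such an $F_n$ is subordinate to a canonical pencil $\Lambda_n$ (Miyanishi--Sugie in the $\bar{\kappa}=-\infty$ case, Kojima/Kawamata for $\bar{\kappa}\in\{0,1\}$), and the functoriality $\phi^{-1}(F_n)=F_{n+1}$ should force $\phi^{*}\Lambda_n\simeq\Lambda_{n+1}$ up to an automorphism of the base $\mathbb{P}^1$. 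This compatibility means the $\Lambda_n$ all arise from a single $\phi$-equivariant pencil on $\mathbb{P}^2$, whose distinguished members (multiple fibres, singular fibres, members containing $F_n$) provide only finitely many irreducible curves in total. Making this pencil compatibility precise -- using the Campana and gcd weight bounds of Theorem \ref{thm:intropencil} to restrict admissible pencils under pullback and to rule out an ever-growing supply of new components -- is the delicate point where the Diophantine input from Lang--Vojta meets the affine-geometric structure theory of $\mathbb{P}^2$.
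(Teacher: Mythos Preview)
Your argument has a genuine gap at the ``main obstacle'' step. You stake everything on proving that $\bigcup_{n\ge 0} F_n$ has only finitely many irreducible components, so that persistent curves exist and furnish the invariant closed set. But this is simply false in general: case~\eqref{mtheoremP} of Theorem~\ref{mtheorem} exhibits precisely the situation where $\deg C_{i+1}=d\deg C_i$ for all $i\ge i_0$, so the $C_i$ are pairwise distinct curves of unbounded degree and your set $\mathcal{C}$ of persistent curves is empty. The pencil-compatibility heuristic you sketch cannot rescue this, since there is no single $\phi$-equivariant pencil controlling all $F_n$ in that case (and in any event the structure theorems you invoke do not supply a canonical pencil uniformly for all $\bar{\kappa}\le 1$).

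The paper's route is quite different and avoids this trap. First, rather than bounding $\bar{\kappa}$, it uses Vojta's elementary result (Lemma~\ref{Vlemma}, via unit equations) together with Lemma~\ref{pblemma} to bound the number of irreducible components of each $F_n$ by $3$. Then, for each irreducible component $C$ of some fixed $F_n$, it dichotomizes: either the degrees of $C_i=\phi^{-i}(C)$ fail to grow multiplicatively infinitely often, forcing $C_i$ into the ramification locus repeatedly and hence $\phi^{-m}(C)=C$ for some $m$ (so $C$ itself is the invariant curve, and Lemma~\ref{invlemma} makes it a line); or the degrees do grow, in which case Lang--Vojta plus Wakabayashi (Lemma~\ref{intlemma}) forces each $C_i$ to be rational with at most two singular points. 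Since singularities pull back to singularities (Lemma~\ref{singlemma}) and the count cannot increase, the singular points $P_i$ satisfy $\phi^{-1}(P_i)=\{P_{i+1}\}$, and the Amerik--Campana bound on totally ramified points makes $\{P_i\}$ finite. That finite set of \emph{points}, not curves, is the completely invariant closed set in this branch.
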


Finally, we remark that our results should admit, via Vojta's dictionary \cite{vojta}, corresponding results in Nevanlinna theory and for holomorphic maps $f:\mathbb{C}\to\mathbb{P}^2\setminus D$.  We will not pursue this here, but content ourselves with a few remarks.  Recall that via Vojta's dictionary, an infinite (or Zariski-dense) set of $(D,S)$-integral points on $\mathbb{P}^2$ corresponds to a holomorphic map $f:\mathbb{C}\to\mathbb{P}^2\setminus D$ that is nonconstant (or with Zariski-dense image).  Since our proofs combine geometric results with arguments from Diophantine approximation involving (local) height functions, all of our proofs and results on nondensity of integral points on $\mathbb{P}^2\setminus D$ should translate, in a straightforward manner, to corresponding proofs involving Nevanlinna theory and results on holomorphic maps $f:\mathbb{C}\to\mathbb{P}^2\setminus D$.  One notable difference is that in Nevanlinna theory the analogue of the $abc$ conjecture is known (Nevanlinna's Second Main Theorem with truncated counting functions).  Thus, our results on integral points that are conditional on the $abc$ conjecture will yield unconditional corresponding results for holomorphic curves.

The paper is organized as follows.  In Section 2, we recall some basic definitions from algebraic geometry and Diophantine geometry.  In Section 3, we collect together theoretical results on integral points, including a geometric lemma (Lemma \ref{lem:twopts}) used to produce Zariski dense sets of integral points.  In this section, we also define the two notions of weights of pencils, and discuss their relation to integral points (Theorem \ref{pencildegen}). In Section 4, we quote various structure theorems for affine subsets of $\pp^2$.  Section 5 is the heart of the paper and we work towards classifying divisors $D$ for which $\pp^2 \setminus D$ has potentially dense set of integral points, using results of Sections 3 and 4.  In Section 6, we prove a more precise version of Theorem \ref{thm:orbit} (Theorem \ref{mtheorem}).  In Section 7, we discuss a series of specific examples.  We end with several open problems for further study.

\section{Definitions and Notation}

In this section, we recall some definitions and notations from algebraic geometry and Diophantine geometry.  See \cite{bomgub,hinsil,iitaka} for further details.
We begin by defining the Kodaira-Iitaka dimension $\kappa$ of a divisor.
\begin{definition}
Let $D$ be a divisor on a smooth projective variety $X$ over an algebraically closed field of characteristic $0$.  If $\dim H^0(X,\O(nD))=0$ for all $n>0$ then we define $\kappa(X,D)=-\infty$.  Otherwise, define $\kappa=\kappa(X,D)$ to be the integer for which
\begin{equation*}
\limsup_{n\to\infty} \frac{\dim H^0(X,\O(nD))}{n^{\kappa}}
\end{equation*}
exists and is nonzero.
\end{definition}

It is well-known that $\kappa(X,D)\in \{-\infty, 0,1,\ldots, \dim X\}$.  We now define the logarithmic Kodaira dimension of a quasi-projective variety.

\begin{definition}
Let $V$ be a smooth quasi-projective variety over an algebraically closed field of characteristic $0$.  Let $V=X\setminus D$, where $X$ is a smooth projective variety and $D$ is a normal crossings divisor on $X$.  Then the \textit{log Kodaira dimension} $\kbar(V)$ of $V$ is defined to be the Kodaira-Iitaka dimension of the divisor $K_X + D$, where $K_X$ is the canonical divisor of $X$.
\end{definition}

Note that there always exists a choice of $X$ and $D$ as in the definition (by Hironaka's results) and it is known that $\kappa(K_X+D)$ is independent of the choice of $X$ and $D$ (so that $\kbar(V)$ is well-defined).  We say that $V$ is of \textit{log general type} if $\kbar(V)$ is as large as possible, that is, if $\kbar(V) = \dim V$.

Let $k$ be a number field, and let us now assume that the smooth projective variety $X$ and the effective divisor $D$ are both defined over $k$.  There are several essentially equivalent ways of defining a set of integral points on $X\setminus D$, including the natural scheme-theoretic definition coming from a choice of integral model of $X\setminus D$.  Here we will follow Vojta \cite{vojta} and define sets of integral points via local height functions.  Let $M_k$ be the canonical set of places of $k$, consisting of one place for each prime ideal $\mathfrak{p}$ of $\mathcal{O}_k$, one place for each real embedding $\sigma:k \to \mathbb{R}$, and one place for each pair of conjugate embeddings $\sigma,\overline{\sigma}:k \to \mathbb{C}$.  For each $v\in M_k$, let  $|\cdot|_v$ denote the corresponding absolute value, normalized so that $|p|_v=p^{-[k_v:\qq_v]/[k:\qq]}$ if $v$ corresponds to a prime $\mathfrak{p}$ and $\mathfrak{p}$ lies above a rational prime $p$, and $|x|_v=|\sigma(x)|^{[k_v:\qq_v]/[k:\qq]}$ if $v$ corresponds to an embedding $\sigma$.  With this normalization, the product formula
\begin{equation*}
\prod_{v\in M_k}|x|_v=1
\end{equation*}
holds for all $x\in k^*$. Let $\Supp D$ denote the support of $D$.  From the theory of heights, for each place $v\in M_k$ we can associate to $D$ a \textit{local height function} $\lambda_v(D,-): X(k)\setminus \Supp D \rightarrow \mathbb{R}$, unique up to a bounded function, such that
\begin{align*}
\sum_{v\in M_k} \lambda_v(D,P) = h(D, P)+O(1)
\end{align*}
for all $P\in X(k)\setminus \Supp D$, where $h(D,-)$ is a Weil height with respect to $D$.  We will not give a precise definition here, but $\lambda_v(D,P)$ is, up to a bounded function, $-\log |f(P)|_v$, where $f$ is a local equation of $D$ around $P$.  We recall that both global and local heights are functorial with respect to pullbacks by morphisms, in the following sense: if $\phi:Y\to X$ is a morphism of smooth projective varieties with $\phi(Y)\not\subset \Supp D$, then
\begin{align*}
\lambda_{v}(\phi^*D,P)&=\lambda_{v}(D,\phi(P))+O(1),\\
h(\phi^*D,P)&=h(D,\phi(P))+O(1).
\end{align*}
In fact, since the global (Weil) height depends only on the linear equivalence class of the divisor $D$, if one works with divisor classes (or line bundles) the condition $\phi(Y)\not\subset \Supp D$ may be avoided for the global height.

Now, let $S$ be a finite subset of $M_k$ containing all the archimedean places.  Then \textit{a set of ($k$-rational)} $(D,S)$-\textit{integral points} on $X$ is defined to be a set of the form
\[
\left\{P\in X(k)\setminus \Supp D: \sum_{v\in M_k\setminus S} \lambda_v(D,P) \le C\right\}
\]
for some choice of local height functions and for some constant $C$.  This notion depends only on the support of $D$.  In fact, it depends only on the variety $V=X\setminus D$.   Thus, for a smooth quasi-projective variety $V$ over $k$ we will call a subset  $R\subset V(k)$ a set of $S$-integral points on $V$ if there exist a smooth projective variety $X$ and an effective divisor $D$ on $X$ such that $V=X\setminus D$ and $R$ is a set of $(D,S)$-integral points on $X$.  We say that integral points on $V$ are \textit{potentially dense} if there  exists a Zariski-dense set of $S$-integral points on $V$ for some number field $k$ and some finite subset $S\subset M_k$.

On $\mathbb{P}^n$, one can choose the local height functions in a canonical way, and thus define sets of integral points unambiguously.  For a polynomial $f\in k[x_0,\ldots, x_n]$ and $v\in M_k$, we let $|f|_v$ denote the maximum of the $v$-adic absolute values of the coefficients of $f$.  Now let $D$ be a hypersurface in $\mathbb{P}^n$ defined by a homogeneous polynomial $f\in k[x_0,\ldots, x_n]$ of degree $d$.  For $v\in M_k$ and $P=(x_0,\ldots, x_n)\in\mathbb{P}^n(k)\setminus \Supp D$, $x_0,\ldots, x_n\in k$, we define the local height function
\begin{equation}
\label{hdv}
\lambda_{v}(D,P)=\log \frac{|f|_v\max_i |x_i|_v^d}{|f(P)|_v}.
\end{equation}
This definition is independent of the choice of the defining polynomial $f$ and the choice of the coordinates for $P$.  Note also that for a nonarchimedean $v$, $\lambda_{v}(D,P)\geq 0$ for all $P\in \mathbb{P}^n(k)\setminus \Supp D$.  Then we can define {\it the} set of $S$-integral points $(\mathbb{P}^n\setminus D)(\O_{k,S})$ to be the set of points
\begin{align*}
\left\{P\in \mathbb{P}^n(k)\setminus \Supp D: \sum_{v\in M_k\setminus S} \lambda_v(D,P) =0\right\}.
\end{align*}

Finally, we recall the notion of local heights with truncation.  This is an analog of the truncated counting function in Nevanlinna theory.  For each non-archimedean $v\in M_k$, we define the minimum positive valuation $\nu_v$ to be
\begin{equation}\label{eq:minval}
\nu_v = \min \{-\log |x|_v: x\in k^*, -\log |x|_v>0\}.
\end{equation}
We can then define the \textit{truncated local height} for a non-archimedean $v$ by
\[
\lambda_v^{(1)}(D,P) = \min\left(\lambda_v(D,P), \nu_v\right).
\]
This captures the nontrivial contribution to the $v$-adic local height, but only counted with the smallest possible contribution.

\section{Integral points}

In this section, we collect together theoretical results on integral points that we will use.  These results will be combined with the geometric structure theorems of the next section to yield a study of integral points on affine subsets of $\mathbb{P}^2$.  Some of our results are also conditional on well-known conjectures, whose statements we now recall.

\subsection{Conjectures}

We discuss several variants of the $abc$ conjecture.\footnote{Mochizuki \cite{mochi} has claimed a proof of this conjecture.}  The following is the most standard formulation:

\begin{conjecture}[Masser--Oesterl\'e $abc$ conjecture]
For all $\epsilon>0$, there is a constant $C>0$ such that for all $a,b,c\in\mathbb{Z}$ with $a+b+c=0$ and $\mathrm{gcd}(a,b,c)=1$, we have
\begin{align*}
\max\{|a|,|b|,|c|\}\leq C \prod_{p|abc}p^{1+\epsilon}.
\end{align*}
\end{conjecture}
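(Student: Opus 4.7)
The statement recorded above is the classical Masser--Oesterl\'e $abc$ conjecture; it is invoked in this paper as a hypothesis rather than established, and the footnote already acknowledges that the only currently claimed proof (Mochizuki's IUT programme) lies entirely outside the techniques used elsewhere in this article. I therefore will not propose a genuine self-contained proof, but rather sketch the two lines of attack that are standard and indicate precisely the step where each one stalls.

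The most direct attempt treats $a+b+c=0$ as an $S$-unit equation with $S = \{p : p \mid abc\} \cup \{\infty\}$: dividing by $c$ yields $a/c + b/c = -1$ with $a/c, b/c \in \mathcal{O}_S^*$, and the task is to bound the height of such solutions linearly in $\log N$, where $N = \prod_{p \mid abc} p$. Baker's theory of linear forms in $p$-adic logarithms, sharpened in the work of Stewart and Yu, yields effective bounds of the shape
\[
\log \max\{|a|,|b|,|c|\} \;\leq\; C(\epsilon)\cdot N^{1/3 + \epsilon},
\]
and the exponent can be nudged downward but not all the way to zero. The structural reason is that Baker's method controls each $p$-adic contribution essentially independently, whereas $abc$ requires one to extract a global saving from the simultaneous additive rigidity of $a,b,c$ across all primes dividing $abc$.

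A second, geometric route, originating with Elkies, uses Belyi's theorem to show that the $abc$ conjecture is equivalent to a Vojta-type height inequality for rational points on $\mathbb{P}^1 \setminus \{0,1,\infty\}$, placing it squarely within the Lang--Vojta circle (Conjecture \ref{VC}) that is invoked throughout the present paper. This reduction is conceptually clarifying but tautological for the purpose of proving $abc$, since the input needed is already the full strength of Vojta's height inequality for three marked points on the projective line. The common obstruction in both approaches, and what I expect to be the main obstacle in any realistic attack, is passing from a fractional-power savings $N^{\theta}$ with $\theta>0$ down to the logarithmic savings $\log N$ that $abc$ demands; no tool currently available in $S$-unit-equation theory or classical Diophantine approximation appears capable of crossing that gap, which is precisely why the statement is recorded here as a conjecture and used only hypothetically in the results that follow.
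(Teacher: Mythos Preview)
Your assessment is correct: the statement is the Masser--Oesterl\'e $abc$ conjecture, and the paper does not prove it. It is recorded here solely as a standing hypothesis, to be invoked in later results (notably Theorem~\ref{morphismabc} and Theorem~\ref{pencildegen}(a)), and the paper offers no argument toward its proof beyond the footnote acknowledging Mochizuki's claimed proof. There is therefore nothing to compare your proposal against.

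Your additional discussion of the Stewart--Yu bounds via linear forms in logarithms and the Elkies--Belyi reduction to Vojta's inequality is accurate and well summarized, but it goes beyond anything the paper itself attempts; the paper simply states the conjecture and moves on. If your intent was to match the paper's treatment, a single sentence noting that the conjecture is assumed rather than proved would have sufficed.
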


Translating the conjecture into the language of heights (see \cite{vojta_abc} for the details) and allowing for arbitrary number fields and (finite) sets of places $S$, we obtain the following formulation of the $abc$ conjecture for number fields.

\begin{conjecture}[$abc$ conjecture for number fields]
\label{abc}
Let $k$ be a number field and let $S$ be a finite set of places of $k$ containing the archimedean places.  Let $\epsilon >0$.  There exists a constant $C$ such that for all $x\in \pp^1(k)\setminus \{0,1,\infty\}$,
\begin{equation*}
(1-\epsilon) h(x)\le \sum_{v\in M_k\setminus S} \left(\trunloc((0), x)+\trunloc((1), x)+\trunloc((\infty), x)\right) \,\,+\,\, C.
\end{equation*}
\end{conjecture}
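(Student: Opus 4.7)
The plan is to derive this formulation from the classical Masser--Oesterl\'e $abc$ conjecture via the standard translation into height-theoretic language, following Vojta. This translation gives the conclusion in the special case $k=\qq$, $S=\{\infty\}$ directly; the statement over arbitrary $(k,S)$ is usually taken as the correct conjectural extension, and I will point out below why there is no easy reduction.

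First I would unwind the three local heights at $0,1,\infty\in\pp^1$. For a nonarchimedean $v\in M_k$ and $x\in k^*$ with $x\ne 1$, one has
\begin{align*}
\lambda_v((0),x) &= \max(-\log|x|_v,\,0) + O(1),\\
\lambda_v((\infty),x) &= \max(\log|x|_v,\,0) + O(1),\\
\lambda_v((1),x) &= \max(-\log|1-x|_v,\,0) + O(1),
\end{align*}
and the truncated versions $\trunloc$ replace any positive value by $\nu_v$, the minimum positive value in $-\log|k^*|_v$. Thus $\sum_{v\notin S}\bigl(\trunloc((0),x)+\trunloc((1),x)+\trunloc((\infty),x)\bigr)$ equals, up to $O(1)$, $\frac{1}{[k:\qq]}\log N_{k/\qq}\bigl(\mathrm{rad}(\mathfrak{a}\mathfrak{b}\mathfrak{c})\bigr)$, where $\mathfrak{a},\mathfrak{c}$ are coprime integral ideals away from $S$ with $(x)=\mathfrak{a}\mathfrak{c}^{-1}$, and $\mathfrak{b}$ is the numerator ideal of $1-x$.

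Second, I would match the left-hand side using the product formula, which gives $h(x)=\frac{1}{[k:\qq]}\log N_{k/\qq}\max(\mathfrak{a},\mathfrak{c})$ (suitably interpreted). Specializing to $k=\qq$, $S=\{\infty\}$, writing $x=a/c$ in lowest terms and setting $b:=c-a$, one obtains $h(x)=\log\max(|a|,|c|)$, and the inequality to be proved becomes $\max(|a|,|c|)\le C\cdot\mathrm{rad}(abc)^{1+\epsilon}$, which together with $|b|\le|a|+|c|$ is exactly Masser--Oesterl\'e.

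The main obstacle is the passage to arbitrary $(k,S)$. The $abc$ conjecture over $\qq$ does not formally imply its number-field analogue: applying the norm $N_{k/\qq}$ turns an identity $\alpha+\beta=\gamma$ in $k$ into a polynomial relation among conjugates, which produces $\qq$-rational triples but destroys the sharp exponent $1+\epsilon$. For this reason the statement over general $(k,S)$ is presented as a conjecture rather than a theorem; a proof would either require a direct argument over $k$ or a genuine extension of the classical $abc$ conjecture to every number field, which remains open.
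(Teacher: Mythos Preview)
The statement you were asked about is a \emph{conjecture}, not a theorem; the paper does not prove it and could not, since it is open. What the paper does is exactly what you do: state the Masser--Oesterl\'e conjecture, remark that translating it into the language of heights (citing Vojta) and allowing arbitrary number fields and finite sets $S$ yields this formulation, and note that the special case $k=\qq$, $S=\{\infty\}$ recovers the original conjecture. Your write-up is more detailed than the paper's one-sentence pointer to \cite{vojta_abc}, but the content and conclusion---that the general-$(k,S)$ version is a genuine conjectural extension and not a consequence of the rational case---are the same.
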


The special case $k=\mathbb{Q}$ and $S=\infty$ is precisely the conjecture of Masser and Oesterl\'e.  Using Belyi maps, Conjecture \ref{abc} is equivalent to the following conjecture, which replaces the three points $\{0,1,\infty\}$ by an arbitrary finite set of points on the projective line (see \cite{vF} for a precise relationship between the two conjectures).

\begin{conjecture}[general $abc$ conjecture]
\label{genabc}
Let $k$ be a number field and let $S$ be a finite set of places of $k$ containing the archimedean places.  Let $\alpha_1,\ldots, \alpha_q \in \pp^1(k)$ be distinct points and let $\epsilon >0$.  There exists a constant $C$ such that for all $x\in \pp^1(k)\setminus \{\alpha_1,\ldots, \alpha_q\}$,
\begin{equation}\label{ineq:trunroth}
(q-2-\epsilon) h(x) \le \sum_{\ell = 1}^q \sum_{v\in M_k\setminus S} \trunloc((\alpha_\ell), x)+C.
\end{equation}
\end{conjecture}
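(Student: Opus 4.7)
The plan is to deduce Conjecture \ref{genabc} from Conjecture \ref{abc} via Belyi's theorem; the reverse direction is immediate by specializing to $\{\alpha_1,\alpha_2,\alpha_3\}=\{0,1,\infty\}$. Belyi's theorem furnishes a morphism $\beta\colon\pp^1\to\pp^1$, defined over a finite extension $k'/k$, of some degree $d$, ramified only over $\{0,1,\infty\}$ and satisfying $\beta(\alpha_\ell)\in\{0,1,\infty\}$ for every $\ell$. Let $B=\beta^{-1}(\{0,1,\infty\})$; then $\{\alpha_1,\ldots,\alpha_q\}\subseteq B$ and Riemann--Hurwitz gives $|B|=d+2$. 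Enlarge $S$ to a finite set $S'$ of places of $k'$ containing the archimedean places and those of bad reduction for the pair $(\beta,B)$, so that for every $v\notin S'$ each $x\in\pp^1(k')$ is $v$-adically close to at most one element of $B$.

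Next I would apply Conjecture \ref{abc} to $\beta(x)$ for $x\in\pp^1(k')\setminus B$, combining it with functoriality of heights and a truncation estimate. Functoriality gives $h(\beta(x))=d\cdot h(x)+O(1)$, and the ``at most one close preimage'' observation yields, for each $v\notin S'$,
\[
\sum_{c\in\{0,1,\infty\}}\trunloc((c),\beta(x)) \le \sum_{P\in B}\trunloc((P),x),
\]
since the left-hand side is nonzero precisely when $\beta(x)$ is $v$-adically close to some $c$, which forces $x$ to be close to a unique $P\in\beta^{-1}(c)$ contributing $\nu_v$ on the right. Summing over $v\in M_{k'}\setminus S'$ and invoking Conjecture \ref{abc} produces
\[
(|B|-2-\epsilon')\, h(x) \le \sum_{P\in B}\sum_{v\in M_{k'}\setminus S'}\trunloc((P),x) + C',
\]
i.e., Conjecture \ref{genabc} for the enlarged point set $B$ (using $|B|-2=d$).

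Finally I would reduce from $B$ to the original set $\{\alpha_1,\ldots,\alpha_q\}$ and descend from $k'$ back to $k$. For each of the $|B|-q$ extra points $P\in B\setminus\{\alpha_1,\ldots,\alpha_q\}$, the trivial bound $\sum_{v\notin S'}\trunloc((P),x)\le h(x)+O(1)$ allows me to move those contributions to the left-hand side, shrinking the coefficient of $h(x)$ from $|B|-2$ down to $q-2$; the residual defect can be absorbed into $\epsilon$ by choosing the $\epsilon$ in Conjecture \ref{abc} sufficiently small. Restricting to $k$-rational points and absorbing the boundedly many terms indexed by $v\in S'\setminus S$ into the constant then yields the inequality over $(k,S)$. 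I expect the main technical obstacle to be the careful truncation bookkeeping under pullback, especially at ramified fibers of $\beta$ and across the finite exceptional set of places where multiple preimages might cluster; this is precisely the analysis worked out in \cite{vF}.
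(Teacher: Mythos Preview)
The statement you are addressing is a \emph{conjecture}, and the paper does not prove it. The paper simply states Conjecture~\ref{genabc}, remarks in one sentence that it is equivalent to Conjecture~\ref{abc} via Belyi maps, and refers the reader to \cite{vF} for the details. There is thus no ``paper's own proof'' to compare against.

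That said, what you have written is precisely the standard Belyi-map argument that the paper is alluding to, and your outline is correct: pull back by a Belyi cover $\beta$ sending the $\alpha_\ell$ into $\{0,1,\infty\}$, use Riemann--Hurwitz to get $|B|=d+2$, compare truncated local heights fiberwise (away from finitely many bad places a point can be $v$-close to at most one preimage, and truncation kills the ramification index), and then trade each extraneous point of $B$ for a loss of $h(x)+O(1)$ on the left. The arithmetic $(|B|-2)-(|B|-q)=q-2$ is exactly right. Your closing caveat is also well placed: the genuine work lies in the truncation bookkeeping at ramified fibers and in the descent of normalized truncated heights from $k'$ to $k$, and that is what \cite{vF} carries out carefully. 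So your proposal is sound; just be aware that you are filling in an argument the paper intentionally outsources.
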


It is this form of the $abc$ conjecture that will be the most convenient for our applications.  We note that Conjecture \ref{genabc} is the special case for $\mathbb{P}^1$ of a general conjecture of Vojta valid for all smooth complete varieties \cite[Conjecture 2.3]{vojta_abc}.

\subsection{Campana and gcd multiplicities and weights}\label{sec:weights}

In this section, we define two weights associated to a fibration over $\pp^1$, and discuss their importance relative to integral points.  In a general setting, Campana \cite{campana} associates to a fibration $f:V\to W$ a certain $\mathbb{Q}$-divisor on $W$ reflecting the multiple fibers of the fibration, and uses this divisor to give the base of the fibration the structure of an orbifold (in the sense of Campana), called the orbifold base of the fibration.  One can then define the canonical bundle and Kodaira dimension of this orbifold base.  As in the Lang-Vojta conjecture (Conjecture \ref{VC}), when the Kodaira dimension of the orbifold base is maximal, one expects integral points on the orbifold base to be sparse, and then the same conclusion holds for the variety $V$.  We now make this more precise in the special case of a fibration over $\pp^1$, where all of the relevant information is captured in the definition of certain weights associated to the fibration.  We first define a weight using the minimum of the multiplicities in a fiber, following Campana \cite{campana}.

\begin{definition}
Let $D$ be an effective divisor on a nonsingular projective variety $V$ over a field $k$ of characteristic $0$.  Let $\phi:V\to\mathbb{P}^1$ be a nonconstant morphism over $k$.  For each point $P\in \pp^1(\bar{k})$, we define the \textit{Campana multiplicity} $m_{D, \phi}(P)$ to be the infimum of the multiplicities of the irreducible components of the fiber over $P$, excluding any irreducible components of $D$.  The infimum of the empty set is defined as usual to be $\infty$, so if all components of a fiber $\phi^{-1}(P)$ are inside $D$, then $m_{D, \phi}(P)$ is defined to be $\infty$.  Then the \textit{Campana weight} of $(D,\phi)$ is
\[
\sum_{P\in \pp^1(\bar{k})} \left(1- \frac {1}{m_{D,\phi}(P)}\right).
\]
\end{definition}

\begin{remark}
Campana dealt with the case $D=0$ in the above definition, where the infimum is taken over all irreducible components of the fibers.  If we use the convention that every component of $D$ has multiplicity $\infty$ in any fiber, then we can still define $m_{D,\phi}(P)$ to be the infimum of the multiplicities of \textit{all} the irreducible components of the fiber over $P$, including those inside $D$.
\end{remark}

The significance of the Campana weight of $(D,\phi)$ is contained in the next result, conditional on the $abc$ conjecture.

\begin{theorem}
\label{morphismabc}
Assume the $abc$ conjecture (Conjecture \ref{genabc}).  Let $D$ be an effective divisor on a nonsingular projective variety $V$, both defined over a number field $k$.  Let $\phi:V\to\mathbb{P}^1$ be a nonconstant morphism over $k$.  Let $S$ be a finite set of places of $k$ containing the archimedean places.  If the Campana weight of $(D, \phi)$ is greater than $2$, then any set of $S$-integral points on $V\setminus D$ is contained in finitely many fibers of $\phi$.  In particular, any set of $S$-integral points on $V\setminus D$ is Zariski-non-dense.
\end{theorem}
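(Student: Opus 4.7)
The plan is to push $S$-integral points $P\in (V\setminus D)(\O_{k,S})$ down to $\pp^1$ via $\phi$ and apply the general $abc$ conjecture (Conjecture \ref{genabc}) to their images $\phi(P) \in \pp^1(k)$. Let $\alpha_1,\ldots,\alpha_q \in \pp^1(\bar k)$ be the (finitely many) points with Campana multiplicity $m_i := m_{D,\phi}(\alpha_i) > 1$; such points are finite in number since the generic fiber of $\phi$ is reduced and does not contain a component of $D$. After a harmless finite extension of $k$ (with $S$ extended accordingly), I assume all $\alpha_i$ are $k$-rational. The goal is to show $h(\phi(P))$ is bounded, whence Northcott forces $\phi(P)$ into a finite set, placing $P$ in finitely many fibers of $\phi$.

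The core step is a local estimate, for each $v \notin S$, comparing the truncated local height $\lambda_v^{(1)}(\alpha_i,\phi(P))$ on $\pp^1$ with the untruncated local height $\lambda_v(\alpha_i,\phi(P)) = \lambda_v(\phi^*\alpha_i,P)$ on $V$. Write $\phi^*\alpha_i = \sum_j n_{ij} C_{ij}$ as a sum of irreducible components. If the truncated height is nonzero, then $P$ is $v$-adically close to some $C_{ij}$. When $C_{ij} \not\subset \Supp D$, the definition of Campana multiplicity gives $n_{ij} \geq m_i$, and combining additivity of local heights with $\lambda_v(C_{ij},P)\geq \nu_v$ yields
\begin{equation*}
\lambda_v^{(1)}(\alpha_i,\phi(P)) \leq \nu_v \leq \tfrac{1}{m_i}\lambda_v(\alpha_i,\phi(P)) + O(1).
\end{equation*}
When every nearby component lies in $\Supp D$ (which includes the case $m_i=\infty$, under the convention $1/m_i = 0$), the truncated height instead contributes to $\lambda_v(D,P)$. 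Summing over $v \notin S$ and invoking both $\sum_{v \notin S}\lambda_v(D,P) = O(1)$ (integrality of $P$) and $\sum_{v \notin S}\lambda_v(\alpha_i,\phi(P)) \leq h(\phi(P)) + O(1)$ yields the global bound
\begin{equation*}
\sum_{v\notin S} \lambda_v^{(1)}(\alpha_i,\phi(P)) \leq \tfrac{1}{m_i}\, h(\phi(P)) + O(1).
\end{equation*}

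Summing this over $i=1,\ldots,q$ and applying Conjecture \ref{genabc} to $\phi(P)$ with the set $\{\alpha_1,\ldots,\alpha_q\}$ gives
\begin{equation*}
(q - 2 - \epsilon)\, h(\phi(P)) \;\leq\; \sum_{i=1}^q \tfrac{1}{m_i}\, h(\phi(P)) + O(1),
\end{equation*}
which rearranges to $\bigl[\sum_{i=1}^q (1 - 1/m_i) - 2 - \epsilon\bigr]\, h(\phi(P)) \leq O(1)$. The bracketed coefficient is the Campana weight of $(D,\phi)$ minus $2-\epsilon$, which is positive for small enough $\epsilon$ by hypothesis; hence $h(\phi(P))$ is bounded, completing the proof. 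The principal technical obstacle is the $O(1)$-accounting in the local height comparison near points where components of the fiber intersect, particularly where a component of $D$ meets a non-$D$ component: this is handled by carefully choosing representatives of local height functions and by absorbing all ``$D$-component'' contributions into the integrality bound $\sum_{v\notin S}\lambda_v(D,P) = O(1)$, with the option of first passing to a model on which the fiber has simple normal crossings to streamline the bookkeeping.
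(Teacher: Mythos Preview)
Your proposal is correct and follows the same approach as the paper: apply Conjecture~\ref{genabc} to $\phi(P)$, pull back via functoriality, kill the $D$-components by integrality, and extract the factor $1/m_i$ from the remaining multiplicities. The paper avoids your local case analysis by working globally throughout---after dropping the $D$-part it uses that $\lambda_v^{(1)}$ depends only on the support to replace $\sum_j n_{\ell j}F_{\ell j}$ by $\sum_j F_{\ell j}$, then bounds $h\bigl(\sum_j F_{\ell j},P\bigr)\le (\inf_j n_{\ell j})^{-1}\,h\bigl(\sum_j n_{\ell j}F_{\ell j},P\bigr)$ via linearity of the global height, which cleanly sidesteps the $O(1)$ bookkeeping near intersecting fiber components that you flag as the main obstacle.
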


\begin{proof}
Let $\alpha_1,\ldots, \alpha_q$ be the points of $\pp^1(\bar{k})$ over which $\phi$ has multiple fibers or contains an irreducible component of $D$. Let
\[
\phi^*((\alpha_\ell)) = \sum_i m_{\ell i} {D}_{\ell i}  + \sum_j n_{\ell j} {F}_{\ell j}
\]
be the decomposition into irreducible components, where the $D_{\ell i}$'s are the components contained in $D$ and the $F_{\ell j}$'s are the remaining components.

Now let $R$ be a $(D,S)$-integral set of points.  For $P\in R$, we derive the following from Conjecture \ref{genabc} and functoriality of heights with respect to pullbacks by morphisms:
\begin{align}
(q-2-\epsilon) h(\phi(P))  &\le \sum_{\ell = 1}^q \sum_{v\in M_k\setminus S} \trunloc((\alpha_\ell), \phi(P))+O(1) \qquad (\because \eqref{ineq:trunroth})\notag \\
&= \sum_{\ell = 1}^q \sum_{v\in M_k\setminus S} \trunloc\left(\sum_i m_{\ell i} D_{\ell i}  + \sum_j n_{\ell j} F_{\ell j}, P\right)+O(1)\notag \\
&= \sum_{\ell = 1}^q \sum_{v\in M_k\setminus S} \trunloc\left(\sum_j n_{\ell j} F_{\ell j}, P\right) +O(1) \qquad (\because P \text{ $S$-integral})\notag \\
&= \sum_{\ell = 1}^q \sum_{v\in M_k\setminus S} \trunloc\left(\sum_j F_{\ell j}, P\right)+O(1)\notag \\
&\le \sum_{\ell = 1}^q h\left(\sum_j F_{\ell j}, P\right)+O(1)\notag \\
&\le \sum_{\ell = 1}^q \frac 1{\inf_j n_{\ell j}} h\left(\sum_j n_{\ell j} F_{\ell j}, P\right)+O(1) \label{ineq:inf}\\
&\le \sum_{\ell = 1}^q \frac 1{\inf_j n_{\ell j}} h\left(\sum_i m_{\ell i} D_{\ell i}  + \sum_j n_{\ell j} F_{\ell j}, P\right)+O(1)\notag \\
&= \sum_{\ell = 1}^q \frac 1{\inf_j n_{\ell j}} h((\alpha_\ell), \phi(P)) +O(1)\notag\\
&= \left(\sum_{\ell = 1}^q \frac 1{m_{D,\phi}(\alpha_\ell)}\right) h(\phi(P))+O(1).\notag
\end{align}
Note that the inequality \eqref{ineq:inf} holds even when $\{n_{\ell j}\}_j$ is the empty set for some $\ell$.  Since the Campana weight of $(D,\phi)$ is assumed to be greater than $2$, $\sum_{\ell = 1}^q \frac 1{m_{D,\phi}(\alpha_\ell)} < q-2$, and by choosing $\epsilon$ sufficiently small, we conclude that $h(\phi(P))$ is bounded for $P\in R$.  Hence, $P$ must be contained in the union of finitely many fibers of $\phi$.
\end{proof}

To obtain unconditional (and even effective) results, we consider another version of multiplicities and weights, replacing $\inf$ by $\gcd$.  This construction is in fact more classical in algebraic geometry.

\begin{definition}
Let $D$ be an effective divisor on a nonsingular projective variety $V$ over a field $k$ of characteristic $0$.  Let $\phi:V\to\mathbb{P}^1$ be a nonconstant morphism over $k$.  For each point $P\in \pp^1(\bar{k})$, we define the \textit{classical multiplicity} $m^-_{D, \phi}(P)$ to be the greatest common divisor of the multiplicities of the irreducible components of the fiber over $P$, excluding any irreducible components of $D$.  By convention, we define the $\gcd$ of the empty set to be $\infty$, so if all components of a fiber $\phi^{-1}(P)$ are inside $D$, then $m^-_{D,\phi}(P)$ is defined to be $\infty$.  Then the \textit{gcd-weight} of $(D,\phi)$ is
\[
\sum_{P\in \pp^1(\bar{k})} \left(1- \frac {1}{m^-_{D,\phi}(P)}\right).
\]
\end{definition}

One can give an unconditional version of Theorem \ref{morphismabc} using the gcd-weight of $(D,\phi)$.  This is essentially due to Darmon and Granville \cite{DG} (see also \cite{Darmon}).

\begin{theorem}
\label{theoremDG}
Let $D$ be an effective divisor on a nonsingular projective variety $V$, both defined over a number field $k$.  Let $\phi:V\to\mathbb{P}^1$ be a nonconstant morphism over $k$.  Let $S$ be a finite set of places of $k$ containing the archimedean places.  If the gcd-weight of $(D, \phi)$ is greater than $2$, then any set of $S$-integral points on $V\setminus D$ is contained in finitely many fibers of $\phi$.  In particular, any set of $S$-integral points on $V\setminus D$ is Zariski-non-dense.
\end{theorem}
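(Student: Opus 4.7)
The plan is to follow the Darmon--Granville strategy: construct a suitable ramified cover of $\pp^1$ on which $\phi(P)$ lifts to an integral point for every $(D,S)$-integral $P\in V\setminus D$, and then conclude via Siegel's theorem on the cover.

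First, I would list the finitely many points $\alpha_1,\ldots,\alpha_q\in\pp^1(\bar k)$ at which $m_\ell:=m^-_{D,\phi}(\alpha_\ell)>1$ (allowing $m_\ell=\infty$). After replacing $k$ by a finite extension and enlarging $S$ accordingly, I may assume the $\alpha_\ell$ are $k$-rational. By Riemann's existence theorem, there exists a smooth projective curve $C$ over $\bar k$ with a finite cover $\pi:C\to\pp^1$ that is unramified outside $\{\alpha_\ell:m_\ell<\infty\}$ and has every preimage of such an $\alpha_\ell$ carrying ramification index exactly $m_\ell$. Set $T=\{\alpha_\ell:m_\ell=\infty\}$ and $C^\circ=C\setminus\pi^{-1}(T)$. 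A standard descent argument lets me assume $C$, $\pi$, and $C^\circ$ are all defined over $k$. By the logarithmic Riemann--Hurwitz formula,
$$-\chi(C^\circ)=\deg(\pi)\left(\sum_{\ell}\left(1-\tfrac{1}{m_\ell}\right)-2\right),$$
which is strictly positive by the hypothesis on the gcd-weight. Hence $C^\circ$ is a hyperbolic affine curve, and Siegel's theorem (combined with Faltings when $g(C)\ge 2$) forces $C^\circ(\O_{k',S'})$ to be finite for every number field $k'\supset k$ and every finite set of places $S'\supset S$.

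The key arithmetic step is the lifting. Decompose $\phi^*(\alpha_\ell)=\sum_i m_{\ell i}D_{\ell i}+\sum_j n_{\ell j}F_{\ell j}$ with $D_{\ell i}\subset\Supp D$ and $F_{\ell j}\not\subset\Supp D$, so that $\gcd_j n_{\ell j}=m_\ell$. For $P\in(V\setminus D)(\O_{k,S})$ and $v\notin S$, functoriality of local heights gives
$$\lambda_v((\alpha_\ell),\phi(P))=\sum_i m_{\ell i}\,\lambda_v(D_{\ell i},P)+\sum_j n_{\ell j}\,\lambda_v(F_{\ell j},P)+O(1).$$
Integrality kills the first sum up to a bounded contribution concentrated at finitely many places; the second sum is a non-negative integer combination of the $n_{\ell j}$ (after normalizing by $\nu_v$), hence is an integer multiple of $\gcd_j n_{\ell j}=m_\ell$. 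After absorbing the bounded errors by enlarging $S$ once and for all, the order of vanishing of $\phi(P)-\alpha_\ell$ at every $v\notin S$ is therefore divisible by $m_\ell$ for each $\ell$ with $m_\ell<\infty$ (points $P$ with $\phi(P)\in T$ already lie in one of finitely many fibers and may be discarded).

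By the ramified form of the Chevalley--Weil theorem---the divisibility just established being exactly the hypothesis needed to extract $m_\ell$-th roots of local uniformizers at each $\alpha_\ell$---there is a further fixed finite enlargement of $k$ and $S$ after which $\phi(P)$ lifts to a point of $C^\circ(\O_{k,S})$. Finiteness of this set then forces $\phi(P)$ to take only finitely many values, and so $P$ lies in finitely many fibers of $\phi$. I expect the arithmetic step to be the main obstacle: the crux of the argument is that the order of $\phi(P)-\alpha_\ell$ at good primes is an \emph{integer multiple of $m_\ell$}, not merely bounded below by $m_\ell$, and this is precisely what distinguishes the unconditional gcd case from the conditional Campana case, where only $\inf_j n_{\ell j}$ (rather than $\gcd_j n_{\ell j}$) would be available and no analogous arithmetic descent is possible.
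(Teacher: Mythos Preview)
Your proposal is correct and follows the same Darmon--Granville strategy the paper uses. The only difference is that the paper invokes Darmon's ``Faltings plus epsilon'' theorem for $M$-curves as a black box after establishing that $\phi(R)$ lands in the integral points of $(\pp^1;P_1,m_1;\ldots;P_n,m_n)$, whereas you unpack that cited result by explicitly constructing the ramified cover via Riemann's existence theorem and then applying a Chevalley--Weil lifting plus Siegel/Faltings; the arithmetic reduction step (showing $\ord_v(\phi(P)-\alpha_\ell)$ is divisible by $m_\ell$ at good primes, which is where the gcd rather than the infimum is essential) is identical in both.
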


\begin{proof}
Let $R$ be a set of $S$-integral points on $V\setminus D$ and let $P\in R$.  Let
\begin{align*}
\{P_1,\ldots, P_n\}=\{P\in \mathbb{P}^1(\bar{k})\mid m^-_{D, \phi}(P)>1\}
\end{align*}
and replace $k$ by the finite extension $k(P_1,\ldots, P_n)$.  Set $m_i=m^-_{D, \phi}(P_i)$.  Then for each $i$, we have $\Supp \phi^*P_i \subset \Supp D$ if $m_i=\infty$, and otherwise $\phi^*P_i=D_i+m_iF_i$ for some effective divisors $D_i$ and $F_i$ on $V$ with $\Supp D_i\subset \Supp D$.  It follows from functoriality of local heights that
\begin{align}
\lambda_v(P_i,\phi(P))&=m_i\lambda_v(F_i,P)+O_v(1), && m_i<\infty, \label{constants1}\\
\lambda_v(P_i,\phi(P))&=O_v(1), && m_i=\infty, \label{constants2}
\end{align}
for all $v\in M_k\setminus S$, where $O_v(1)=0$ for all but finitely many $v$.  Since $\lambda_v(P_i, \phi(P))$ can be taken as the product of a $v$-adic intersection pairing $(P_i, \phi(P))_v$ and $\nu_v$ of \eqref{eq:minval}, it follows that for some finite set of places $S'$ of $k$ containing the archimedean places, we can choose an integral model on $\pp^1$ so that the set $\phi(R)$ is contained in a set of $S'$-integral points on the $M$-curve $(\mathbb{P}^1; P_1, m_1;\cdots ;P_n, m_n)$, in the language of Darmon \cite{Darmon}. Our assumption on the gcd-weight of $(D,\phi)$ implies that this $M$-curve has negative Euler characteristic, and so by \cite[Theorem (Faltings plus epsilon)]{Darmon}, $\phi(R)$ is a finite set.  Thus, $R$ lies in finitely many fibers of $\phi$.
\end{proof}

Under the additional assumption that one of the multiplicities is infinite, one obtains an effective result.  This is essentially due to Bilu \cite[Th. 1.2] {Bilu}, who proved an explicit quantitative result when $V$ is a curve.

\begin{theorem}
\label{effectivetheorem}
Let $D$ be an effective divisor on a nonsingular projective variety $V$, both defined over a number field $k$.  Let $\phi:V\to\mathbb{P}^1$ be a nonconstant morphism over $k$.  Let $S$ be a finite set of places of $k$ containing the archimedean places.  If the gcd-weight of $(D, \phi)$ is greater than $2$ and $m^-_{D, \phi}(P)=\infty$ for some point $P\in \mathbb{P}^1(\bar{k})$, then any set of $S$-integral points on $V\setminus D$ is contained in finitely many effectively computable fibers of $\phi$.
\end{theorem}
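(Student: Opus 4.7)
The plan is to follow the proof of Theorem \ref{theoremDG} almost verbatim, and observe that the additional hypothesis $m^-_{D,\phi}(P_0)=\infty$ for some $P_0\in\mathbb{P}^1(\bar k)$ allows one to replace the non-effective appeal to Darmon--Granville with Bilu's effective result.

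First I would repeat the reduction step: after enlarging $k$ to include all the finitely many $P_i\in\mathbb{P}^1(\bar k)$ with $m_i:=m^-_{D,\phi}(P_i)>1$ and enlarging $S$ to a suitable $S'$, and for each finite $m_i$ writing $\phi^*P_i=D_i+m_iF_i$ with $\Supp D_i\subset \Supp D$ (while $\Supp \phi^*P_0\subset\Supp D$), the functoriality identities \eqref{constants1} and \eqref{constants2} show that the image $\phi(R)\subset\mathbb{P}^1(k)$ is contained in a set of $S'$-integral points on the $M$-curve $\mathcal{M}=(\mathbb{P}^1;P_0,\infty;P_1,m_1;\ldots;P_n,m_n)$ in the sense of Darmon \cite{Darmon}. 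The hypothesis that the gcd-weight exceeds $2$ says exactly that the Euler characteristic of $\mathcal{M}$ is negative.

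Next I would invoke Bilu's effective theorem \cite[Th.~1.2]{Bilu}, which gives an effective bound on the height of $S'$-integral points on an $M$-curve of negative Euler characteristic, \emph{provided at least one of the assigned multiplicities is $\infty$}. That proviso is exactly what the hypothesis $m^-_{D,\phi}(P_0)=\infty$ supplies (the point $P_0$ plays the role of a genuine puncture on the base). The underlying mechanism is that deleting $P_0$ turns $\phi(R)$ into a set of $S'$-integral points on $\mathbb{A}^1$ satisfying, at each remaining $P_i$, the weighted divisibility condition $m_i\mid \ord_{P_i}(\cdot)$; passing to an appropriate Kummer cover and then applying Baker's method to the resulting integral points on a curve of negative Euler characteristic yields effective height bounds. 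Thus the set $\phi(R)\subset\mathbb{P}^1(k)$ is finite and effectively computable.

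Finally, since $R$ is contained in $\phi^{-1}(\phi(R))$, $R$ lies in the union of finitely many effectively computable fibers of $\phi$, as desired. The main obstacle is bookkeeping rather than conceptual: one must verify carefully that the field extension $k(P_0,P_1,\ldots,P_n)$, the enlarged set of places $S'$, and the bounded constants $O_v(1)$ appearing in \eqref{constants1}--\eqref{constants2} can all be made explicit in terms of the data $(V,D,\phi,k,S)$, so that Bilu's effective bound produces an \emph{effectively} computable finite list of fibers rather than merely a finite one.
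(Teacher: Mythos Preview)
Your proposal is correct and is in fact the route the paper itself signals just before the theorem, where it attributes the result ``essentially'' to Bilu \cite[Th.~1.2]{Bilu}. However, the paper's own proof does \emph{not} invoke Bilu as a black box. Instead it carries out an explicit case analysis on the number of points $P_i$ with $m^-_{D,\phi}(P_i)=\infty$: (I) three or more such points reduces to the $S$-unit equation $u+v=1$; (II) exactly two such points reduces to $u-1=z^m$ with $u$ a unit; (III) exactly one such point together with some finite $m_j\geq 3$ reduces to a superelliptic equation $x^m-y^n=1$; (IV) exactly one such point together with at least three points of multiplicity $2$ reduces to a pair of simultaneous Pell-type equations defining an affine elliptic curve. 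In each case the relevant Diophantine equation is known to have an effectively computable finite solution set (via Baker's theory, effective Siegel for elliptic curves, etc.).

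The trade-off is clear: your approach is shorter and shows the conceptual reason the extra hypothesis buys effectivity (the infinite multiplicity is a genuine puncture, so a Kummer cover converts the $M$-curve problem into an ordinary integral-points problem on a curve amenable to Baker's method), whereas the paper's approach is self-contained and makes visible exactly which classical equations arise, which is useful if one wants explicit bounds in concrete examples. Your last paragraph about bookkeeping the constants in \eqref{constants1}--\eqref{constants2} is well taken and applies equally to both approaches; the paper simply absorbs this into the remark preceding the proof.
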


Implicit in Theorem \ref{effectivetheorem}, we assume that a set of $S$-integral points on $V\setminus D$ is given in an explicit fashion such that the constants in \eqref{constants1} and \eqref{constants2} in the proof of Theorem \ref{theoremDG} are effectively computable.

\begin{proof}
As before, let $R$ be a set of $S$-integral points on $V\setminus D$ and let
\begin{align*}
\{P_1,\ldots, P_n\}=\{P\in \mathbb{P}^1(\bar{k})\mid m^-_{D, \phi}(P)>1\}.
\end{align*}
Replacing $k$ by the finite extension $k(P_1,\ldots, P_n)$, we may assume that $P_1,\ldots, P_n$ are $k$-rational.  We consider four cases.

Case I:  There are at least three distinct points $P_i$ with $m^-_{D, \phi}(P_i)=\infty$.  After an automorphism of $\mathbb{P}^1$, we can assume that $m^-_{D, \phi}(0)=m^-_{D, \phi}(1)=m^-_{D, \phi}(\infty)=\infty$.  Then we can enlarge $S$ so that $\phi(P)$ and $1-\phi(P)$ are $S$-units for all $P\in R$.  Thus, setting $u=\phi(P)$ and $v=1-\phi(P)$, we obtain a solution to the $S$-unit equation
\begin{align*}
u+v=1, \quad u,v\in \O_{k,S}^*.
\end{align*}
As the $S$-unit equation has finitely many solutions, which are effectively computable, it follows that $\phi(R)$ is a finite set and $R$ is contained in finitely many effectively computable fibers of $\phi$.\\

Case II: There are exactly two distinct points $P_i$ with $m^-_{D, \phi}(P_i)=\infty$.  After an automorphism of $\mathbb{P}^1$, we can assume that $m^-_{D, \phi}(0)=m^-_{D, \phi}(\infty)=\infty$.  Since the gcd-weight of $(D, \phi)$ is greater than $2$, there is at least one other point $P_i$ with $m^-_{D, \phi}(P_i)=m$, $1<m<\infty$.  After an automorphism of $\mathbb{P}^1$, we may assume that $m^-_{D, \phi}(1)=m$.  Then we can enlarge $S$ so that  $\phi(P)$ is an $S$-unit and the ideal generated by $\phi(P)-1$ is an $m$th power of an ideal in $\O_{k,S}$ for all $P\in R$.  In fact, after enlarging $S$ so that $\O_{k,S}$ is a principal ideal domain and adjoining the $m$th roots of  the finitely many generators of $\O_{k,S}^*$, we obtain a number field $L$ and a finite set of places $T$ of $L$ such that $\phi(P)$ is a $T$-unit and $\phi(P)-1$ an $m$th power in $\O_{L,T}$ for all $P\in R$.  Let $u=\phi(P)$.  Then $u$ satisfies the equation
\begin{align}
\label{uniteq1}
u-1=z^m, \quad u\in \O_{L,T}^*, z\in \O_{L,T}.
\end{align}

Rewriting this as $u = z^m+1$ and noting that the right hand side has at least two distinct roots (over $\bar{k}$), the equation \eqref{uniteq1} easily reduces to unit equations.  It follows that $\phi(R)$ is a finite set and $R$ is contained in finitely many effectively computable fibers of $\phi$.\\

Case III: There is exactly one point $P_i$ with $m^-_{D, \phi}(P_i)=\infty$ and there is at least one point $P_j$ with finite multiplicity $m^-_{D, \phi}(P_j)>2$.  Note that by our assumptions there must exist a third point $P_k$, distinct from $P_i$ and $P_j$, with finite multiplicity $m^-_{D, \phi}(P_k)>1$. After an automorphism of $\mathbb{P}^1$, we can assume that
\begin{align*}
m^-_{D, \phi}(\infty)&=\infty,\\
m^-_{D, \phi}(0)&=m\geq 2,\\
m^-_{D, \phi}(1)&=n\geq 3.
\end{align*}
Then arguing as before, there exist a number field $L$ and a finite set of places $T$ of $L$, such that $\phi(P)=x^m$ for some $x\in \O_{L,T}$ and $x^m-1=y^n$ for some $y\in \O_{L,T}$ for all $P\in R$.  Since $m,n>1$ and $(m,n)\neq (2,2)$, the superelliptic equation
\begin{align*}
x^m-y^n=1, \quad x,y\in \O_{L,T}
\end{align*}
has only finitely many effectively computable solutions.  It follows that $\phi(R)$ is a finite set and $R$ is contained in finitely many effectively computable fibers of $\phi$.\\

Case IV:  There is exactly one point $P_i$ with $m^-_{D, \phi}(P_i)=\infty$ and there are at least three points $P_j$ with multiplicity $m^-_{D, \phi}(P_j)=2$.  After an automorphism of $\mathbb{P}^1$, we can assume that $m^-_{D, \phi}(\infty)=\infty$ and $m^-_{D, \phi}(0)=m^-_{D, \phi}(a)=m^-_{D, \phi}(b)=2$ for some distinct $a,b\in k^*$.  Then arguing as before, there exist a number field $L$ and a finite set of places $T$ of $L$ such that for all $P\in R$, $\phi(P)=x^2$ for some $x\in \O_{L,T}$ and
\begin{align}
x^2-a&=y^2 \label{doublePell1},\\
x^2-b&=z^2 \label{doublePell2},
\end{align}
for some $y,z\in \O_{L,T}$.  As is well-known, the equations \eqref{doublePell1} and \eqref{doublePell2} yield an affine model of an elliptic curve.  Since Siegel's theorem is effective for elliptic curves, we again obtain that $\phi(R)$ is a finite set and $R$ is contained in finitely many effectively computable fibers of $\phi$.\\

It's clear that every possible case is covered by Cases I-IV, finishing the proof.
\end{proof}

\subsection{Pencils of plane curves and integral points}\label{sec:pencilweights}

In this section we reformulate the definitions and results of the last section in the context of pencils of plane curves.

\begin{definition}
Let $\Lambda$ be a pencil of curves on $\mathbb{P}^2$ and let $D$ be an effective divisor on $\mathbb{P}^2$ containing the base points of $\Lambda$.  For each member $C\in \Lambda$, let $m_{D,\Lambda}(C)$ (resp. $m^-(D,\Lambda)(C)$) be the infimum (resp.\ gcd) of the multiplicities of the irreducible components of $C$, excluding irreducible components which are also components of $D$.  Then the \textit{Campana weight} of $(D,\Lambda)$ is
\begin{align*}
\sum_{C\in \Lambda} \left(1- \frac {1}{m_{D,\Lambda}(C)}\right)
\end{align*}
and the \textit{gcd weight} of $(D,\Lambda)$ is
\begin{align*}
\sum_{C\in \Lambda} \left(1- \frac {1}{m^-_{D,\Lambda}(C)}\right).
\end{align*}
\end{definition}

We connect these definitions with the results of the last section through the following lemma.

\begin{lemma}
\label{pencil}
Let $\Lambda$ be a pencil of curves on $\mathbb{P}^2$ and let $D$ be an effective divisor on $\mathbb{P}^2$ containing the base points of $\Lambda$.  Let $\phi=\phi_\Lambda:\pp^2\dashrightarrow \pp^1$ be the corresponding rational map.  Let $\tilde{\phi}:V\to \mathbb{P}^1$ be a morphism resolving the indeterminacy locus of $\phi$, that is a map for which
\begin{equation*}
\xymatrix{V \ar[drr]^{\tilde{\phi}} \ar[d]_{\pi} \\ \pp^2 \ar@{-->}[rr]_{\phi} & &\pp^1}
\end{equation*}
commutes where $\pi: V\rightarrow \pp^2$ is a birational morphism of nonsingular varieties.  The Campana weight of $(D,\Lambda)$ is the same as the Campana weight of $(\pi^*D, \tilde{\phi})$.  The gcd weight of $(D,\Lambda)$ is the same as the gcd weight of $(\pi^*D, \tilde{\phi})$.
\end{lemma}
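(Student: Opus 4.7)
The plan is to compare, for each $P \in \mathbb{P}^1(\bar k)$, the multiplicity $m_{D,\Lambda}(C_P)$ associated to the member $C_P \in \Lambda$ corresponding to $P$ with $m_{\pi^*D,\tilde\phi}(P)$ (and similarly for $m^-$), and to show they are equal. Summing $1 - 1/m$ over all $P$ then gives the equality of weights in both versions. The key geometric input is that the base locus of $\Lambda$ is contained in $\Supp D$, so $\pi$ can be taken to be a composition of blow-ups centered at points lying over $\Supp D$. In particular, every exceptional divisor of $\pi$ lies in $\pi^{-1}(\Supp D) = \Supp \pi^*D$, and $\pi$ is an isomorphism away from $\Supp D$.

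First I would write $C_P = \sum_i m_i C_i$ as its irreducible decomposition on $\mathbb{P}^2$ and study the relationship between $\tilde\phi^*(P)$ and $\pi^*C_P$. Choosing generators $s_0, s_1$ of the pencil and letting $t$ be the greatest common divisor of $\pi^*s_0$ and $\pi^*s_1$ (whose divisor is exceptional, since $\pi$ resolves the indeterminacy), we have $\tilde\phi = [\pi^*s_0/t : \pi^*s_1/t]$, and therefore
\[
\pi^*C_P \;=\; \tilde\phi^*(P) \,+\, \dv(t),
\]
where $\dv(t)$ is an effective divisor supported on the exceptional locus of $\pi$. Since $\pi^*C_P = \sum_i m_i \widetilde{C}_i + \sum_j e_j E_j$ (where $\widetilde{C}_i$ denotes the strict transform and the $E_j$ are exceptional), the irreducible components of $\tilde\phi^*(P)$ are precisely the $\widetilde{C}_i$ with multiplicity $m_i$, together with certain exceptional divisors with various multiplicities.

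Next I would check which of these components lie in $\Supp \pi^*D$. Because $\pi^{-1}(\Supp D)$ consists of the strict transforms of components of $D$ together with all of the exceptional divisors, $\widetilde{C}_i$ lies in $\Supp \pi^*D$ if and only if $C_i \subset \Supp D$, whereas every $E_j$ automatically lies in $\Supp \pi^*D$. Thus, the multiset of multiplicities appearing in $\tilde\phi^*(P)$ after discarding components contained in $\Supp \pi^*D$ equals the multiset $\{\, m_i : C_i \not\subset \Supp D \,\}$, which is exactly the multiset used to compute $m_{D,\Lambda}(C_P)$ and $m^-_{D,\Lambda}(C_P)$. Taking the infimum (resp.\ gcd) gives $m_{\pi^*D,\tilde\phi}(P) = m_{D,\Lambda}(C_P)$ and $m^-_{\pi^*D,\tilde\phi}(P) = m^-_{D,\Lambda}(C_P)$. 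Note that the convention about the empty-set infimum/gcd being $\infty$ is handled uniformly, since every remaining component on one side corresponds bijectively to one on the other.

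The main subtle point will be the identity $\pi^*C_P = \tilde\phi^*(P) + \dv(t)$ with $\dv(t)$ exceptional: this is a standard consequence of the construction of the resolution of indeterminacy of a rational map given by a linear system, but it requires care to keep track of which contributions to multiplicities come from the strict transform and which come from exceptional divisors. Once this bookkeeping is set up, the remainder of the proof is immediate, since summing the per-$P$ equalities $1 - 1/m_{\pi^*D,\tilde\phi}(P) = 1 - 1/m_{D,\Lambda}(C_P)$ over all $P \in \mathbb{P}^1(\bar k)$ (equivalently, all $C \in \Lambda$) yields the asserted equality of the Campana and gcd weights.
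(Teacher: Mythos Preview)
Your proposal is correct and follows essentially the same approach as the paper's proof: both argue that since the base locus of $\Lambda$ lies in $\Supp D$, every exceptional divisor of $\pi$ arising in the resolution lies in $\Supp \pi^*D$ and is therefore discarded in the computation of $m_{\pi^*D,\tilde\phi}(P)$, leaving exactly the multiset of multiplicities $\{m_i : C_i \not\subset \Supp D\}$ defining $m_{D,\Lambda}(C_P)$. The paper compresses this into a single sentence (``it's clear then that\ldots''), whereas you spell out the bookkeeping via the identity $\pi^*C_P = \tilde\phi^*(P) + \dv(t)$ with $\dv(t)$ exceptional; this added care is helpful but does not constitute a different argument.
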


\begin{proof}
Let $P\in \mathbb{P}^1(\bar{k})$.  Let $C_P\in \Lambda$ be the corresponding member of the pencil.  Since $D$ contains the base points of $\Lambda$, when computing $m_{\pi^*D,\tilde{\phi}}(P)$ we ignore any exceptional divisors of $V$ lying over base points of $\Lambda$.  It's clear then that $m_{\pi^*D,\tilde{\phi}}(P)=m_{D,\Lambda}(C_P)$ and $m^-_{\pi^*D,\tilde{\phi}}(P)=m^-_{D,\Lambda}(C_P)$
\end{proof}

%

Using Lemma \ref{pencil} and the results of the last section, we immediately obtain the following result which will be used repeatedly to analyze integral points on affine subsets of $\mathbb{P}^2$.

\begin{theorem}\label{pencildegen}
Let $\Lambda$ be a pencil of curves on $\mathbb{P}^2$ and let $D$ be an effective divisor on $\mathbb{P}^2$ containing the base points of $\Lambda$, with  $D$ and $\Lambda$ both defined over a number field $k$.  Let $S$ be a finite set of places of $k$ containing the archimedean places.
\begin{enumerate}
\item Assume the $abc$ conjecture.  If the Campana weight of $(D, \Lambda)$ is greater than $2$, then the $S$-integral points of $\mathbb{P}^2\setminus D$ are contained in the union of finitely many members of $\Lambda$.
\item If the gcd-weight of $(D, \Lambda)$ is greater than $2$, then the $S$-integral points of $\mathbb{P}^2\setminus D$ are contained in the union of finitely many members of $\Lambda$.
\item If the gcd-weight of $(D, \Lambda)$ is greater than $2$ and the support of $D$ contains the support of a member of $\Lambda$, then the $S$-integral points of $\mathbb{P}^2\setminus D$ are contained in the union of finitely many effectively computable members of $\Lambda$.\label{pencildegenc}
\end{enumerate}
\end{theorem}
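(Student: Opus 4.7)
The plan is to reduce each of parts (a), (b), (c) to the corresponding statement for morphisms (Theorems \ref{morphismabc}, \ref{theoremDG}, \ref{effectivetheorem}) via resolution of indeterminacy of the rational map $\phi_\Lambda:\mathbb{P}^2\dashrightarrow\mathbb{P}^1$. First, I would choose a birational morphism $\pi:V\to\mathbb{P}^2$ from a nonsingular projective variety $V$ such that $\tilde\phi:=\phi_\Lambda\circ\pi$ extends to a morphism $V\to\mathbb{P}^1$, exactly as in the setup of Lemma \ref{pencil}. That lemma then identifies the Campana (resp.\ gcd) weight of $(D,\Lambda)$ with the Campana (resp.\ gcd) weight of $(\pi^*D,\tilde\phi)$.

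Next I would verify that any set $R$ of $(D,S)$-integral points on $\mathbb{P}^2$ lifts to a set of $(\pi^*D,S)$-integral points on $V$. The hypothesis that the base points of $\Lambda$ are contained in $\Supp D$ ensures that the exceptional locus of $\pi$ lies inside $\pi^{-1}(\Supp D)=\Supp\pi^*D$; consequently $\pi$ restricts to an isomorphism $V\setminus\Supp\pi^*D\to\mathbb{P}^2\setminus\Supp D$, and each $P\in R$ has a unique preimage $\tilde P\in V(k)\setminus\Supp\pi^*D$. By functoriality of local heights, $\lambda_v(\pi^*D,\tilde P)=\lambda_v(D,P)+O_v(1)$, where the constants $O_v(1)$ form an $M_k$-constant (bounded and vanishing outside a finite set of places). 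Summing over $v\notin S$ shows that the lifts form a set of $(\pi^*D,S')$-integral points on $V$ for a finite set of places $S'\supseteq S$ depending only on $D$ and $\pi$, and enlarging $S$ by this finite set (which is harmless) produces a genuine set of $S$-integral points on $V\setminus\pi^*D$.

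With this reduction, (a) follows immediately by applying Theorem \ref{morphismabc} to $\tilde\phi$, and (b) by applying Theorem \ref{theoremDG}. For (c), the hypothesis that $\Supp D$ contains the support of some member $C\in\Lambda$ forces $m^-_{D,\Lambda}(C)=\infty$ by definition, so via Lemma \ref{pencil} there is a point $Q\in\mathbb{P}^1(\bar k)$ with $m^-_{\pi^*D,\tilde\phi}(Q)=\infty$; this supplies the infinite-multiplicity fiber required to invoke Theorem \ref{effectivetheorem}, yielding the effective conclusion. In each case, the finitely many fibers of $\tilde\phi$ produced by the chosen theorem are the strict transforms of finitely many members of $\Lambda$, and pushing forward by $\pi$ recovers the desired statement. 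The only step requiring genuine care is the lift-of-integral-points argument in the second paragraph — everything else is a mechanical application of Lemma \ref{pencil} and the results of Section 3.2 — and even that step is routine once one observes that base points of $\Lambda$, and hence the exceptional locus of any resolution, lie inside $\Supp D$.
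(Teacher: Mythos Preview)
Your proposal is correct and follows precisely the approach the paper takes: the paper states that the theorem is obtained ``immediately'' from Lemma \ref{pencil} together with Theorems \ref{morphismabc}, \ref{theoremDG}, and \ref{effectivetheorem}, and your argument is exactly this reduction, with the lift-of-integral-points step spelled out carefully (the paper leaves this implicit). Your observation that the exceptional locus of $\pi$ lies in $\Supp\pi^*D$ because the base points of $\Lambda$ lie in $\Supp D$ is the right justification for why the lifted set is again a set of integral points.
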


\begin{remark}
We note that Theorem \ref{pencildegen} holds regardless of the log Kodaira dimension of $\pp^2\setminus D$.  Further, we note that the pencil does not necessarily have to be the one coming from the structure theory of Kawamata \cite{kawamata} and Gurjar--Miyanishi \cite{gurjmiya}.  That is, as long as a pencil satisfies the hypotheses, it does not have to be the one created from the so-called `peeling' theory.
\end{remark}

\subsection{Density of integral points}

The following general lemma will be used several times to construct Zariski-dense sets of integral points.

\begin{lemma}\label{lem:twopts}
Let $D$ be an effective divisor on $\pp^2$ defined over $\overline\qq$.  Let $\Lambda$ be a pencil of plane curves with the property that for a general member $C\in \Lambda$, either $C\setminus D\cong \mathbb{A}^1$ or $C\setminus D\cong \mathbb{G}_m$ (over $\overline\qq$). Suppose that there is a plane curve $C_0$ defined over $\overline \qq$ such that $C_0$ is not a component of any member of $\Lambda$ and either $C_0\setminus D\cong \mathbb{A}^1$ or $C_0\setminus D\cong \mathbb{G}_m$ (over $\overline\qq$).  Then integral points on $\mathbb{P}^2\setminus D$ are potentially dense.
\end{lemma}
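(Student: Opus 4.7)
The strategy is, after enlarging the base field to a suitable number field $L\supset\qq$ with a sufficiently large set of places $S\subset M_L$, to produce a Zariski dense set of $(D,S)$-integral points on $\pp^2\setminus D$ by combining integral points on $C_0$ with integral points on the members of $\Lambda$ passing through them.

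First, since $C_0\setminus D\isom \mathbb{A}^1$ or $\mathbb{G}_m$ over $\overline{\qq}$, I choose $L$ large enough so that $D$, $\Lambda$, $C_0$ are defined over $L$ and the isomorphism descends to $L$. In the $\mathbb{A}^1$ case the descent is automatic because $H^1(k,\mathrm{Aut}(\mathbb{A}^1))=0$ in characteristic zero; in the $\mathbb{G}_m$ case it may require a quadratic extension to rationalize the two points at infinity. Enlarging $S$ so that $\O_{L,S}$, respectively $\O_{L,S}^*$, is infinite, I obtain an infinite sequence $\{P_i\}\subset C_0(L)$ of $(D,S)$-integral points on $\pp^2\setminus D$.

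Next, because $C_0$ is not a component of any member of $\Lambda$, the restriction $\phi_\Lambda|_{C_0}:C_0\dashrightarrow\pp^1$ of the rational map defined by the pencil is nonconstant, hence finite. Therefore the values $t_i:=\phi_\Lambda(P_i)\in\pp^1(L)$ are infinitely many distinct points, and each corresponding member $C_i:=\phi_\Lambda^{-1}(t_i)\in\Lambda$ is distinct and contains the integral point $P_i$. For $i$ outside a finite exceptional set, $C_i$ is a general member, so $C_i\setminus D\isom \mathbb{A}^1$ or $\mathbb{G}_m$ over $\overline\qq$, with an $L$-rational point $P_i$. I apply the same descent argument to each $C_i$: in the $\mathbb{A}^1$ case the isomorphism descends to $L$; in the $\mathbb{G}_m$ case I use a Hilbert-irreducibility type argument applied to the discriminant of the family of degree-two divisors at infinity of $\{C_t\setminus D\}_{t\in\pp^1}$ to pass to a single finite extension $L'/L$ trivializing the twist for infinitely many $i$, and enlarge $S$ to contain a place splitting in $L'/L$ (ensuring positive unit rank). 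For these infinitely many $i$, $C_i\setminus D$ carries infinitely many $L'$-rational $(D,S)$-integral points.

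Finally, the union over such $i$ yields an infinite set of $(D,S)$-integral points on $\pp^2\setminus D$ that contains infinitely many points on each of infinitely many distinct members of $\Lambda$. Any proper closed subvariety $E\subsetneq \pp^2$ would intersect each $C_i$ in at most $\deg E\cdot \deg C_i$ points unless $E\supset C_i$; since $E$ has only finitely many irreducible components, this forces a contradiction and the set is Zariski dense, proving potential density.

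\textbf{Main obstacle.} The delicate step is the uniform treatment of $\mathbb{G}_m$-twists across infinitely many fibers: a priori the trivializing quadratic extension $L_i/L$ depends on $i$, and covering all of them would require an infinite tower. The Hilbert-irreducibility argument (or, equivalently, a direct geometric analysis via the double cover $B\to\pp^1$ branched at the discriminant of the family) is the key input that ensures infinitely many $i$ share a common trivializing extension $L'$, making the uniform choice of $(L',S)$ possible.
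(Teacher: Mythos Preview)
Your overall architecture matches the paper's: enlarge the field, get infinitely many integral points on $C_0$, and then show each member $C_P\in\Lambda$ through such a point carries infinitely many integral points. You also correctly isolate the real difficulty, namely that for a given $P$ the curve $C_P\setminus D$ may only be a \emph{twist} of $\mathbb{G}_m$ over $L$, and the trivializing quadratic extension varies with $P$.

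The gap is in your proposed fix. Hilbert irreducibility runs the wrong way for what you need: applied to the double cover $B\to\pp^1$ (or to the discriminant $\delta(t)$), it tells you that for $t$ outside a thin set the fiber stays \emph{irreducible}, i.e.\ $\delta(t)$ is \emph{not} a square in $L$. It does not produce a single finite extension $L'$ in which infinitely many of the $\delta(t_i)$ become squares; indeed, the classes $\delta(t_i)\in L^*/(L^*)^2$ can land in infinitely many cosets, so no pigeonhole is available. Pulling the double cover back to $C_0$ does not help either: the resulting cover of $C_0$ may well have genus $\geq 1$ or $\geq 3$ points at infinity, so Siegel's theorem can kill its integral points. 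As written, the argument does not close.

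The paper avoids the twist bookkeeping entirely by quoting a theorem of Alvanos--Bilu--Poulakis \cite[Theorem~1.1]{alvbilpou}: if $C'$ is a geometrically rational affine curve over $k$ with at most two points at infinity, $k$ has a complex archimedean place, $|S|\geq 2$, and $C'$ possesses a single smooth $\O_{k,S}$-point, then $C'(\O_{k,S})$ is infinite. The paper therefore chooses $k$ at the outset to have a complex place and takes $|S|\geq 2$; then for each $P\in C_0(\O_{k,S})$ (away from finitely many bad points) the member $C_P$ is defined over $k$, the point $P$ itself is a smooth $\O_{k,S}$-point of $C_P\setminus D$, and the cited theorem immediately gives $|(C_P\setminus D)(\O_{k,S})|=\infty$ --- with no need to split the two points at infinity or pass to any further extension. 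This is the missing ingredient in your plan.
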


\begin{proof}
Let $k$ be a number field with the following properties:
\begin{itemize}
\item The divisor $D$ is defined over $k$.
\item Generators for the pencil $\Lambda$ are defined over $k$.
\item $C_0\setminus D$ is isomorphic over $k$ to either $\mathbb{A}^1$ or $\mathbb{G}_m$.
\item $k$ has at least one complex archimedean place.
\end{itemize}
Since $D$ is very ample, we may embed $\pp^2\setminus D$ over $k$ into some affine space $\mathbb{A}^N$.  By our assumption on $C_0$, for some finite set of places $S$ of $k$ containing the archimedean places, $(C_0\setminus D)(\O_{k,S})=(C_0\setminus D)(k) \cap \mathbb{A}^N(\O_{k,S})$ is infinite.  We may further assume that $|S|\geq 2$.  For every point $P\in C_0$, let $C_P$ denote a member of $\Lambda$ containing $P$.  For all but finitely many $P\in C_0(\O_{k,S})$, we have $C_P\setminus D\cong \mathbb{A}^1$ or $C_P\setminus D\cong \mathbb{G}_m$ (over $\overline k$). Let $P$ be such a point and assume further that $P$ is not a base point of $\Lambda$.  We first note that $C_P$ is defined over $k$.  Indeed, since $\Lambda$ has generators defined over $k$ and $C_P$ contains the $k$-rational point $P$, any conjugate of $C_P$ over $k$ would also be a member of $\Lambda$ containing $P$, and since $P$ is not a base point, must coincide with $C_P$.  Let $C_P'=C_P\setminus D$.  Since $P\in C_P'(\O_{k,S})$ is a nonsingular point of $C_P'$, $C_P'$ is rational and has at most two points at infinity, $k$ has at least one complex archimedean place, and $|S|\geq 2$, it follows from \cite[Theorem 1.1]{alvbilpou} that $C_P'(\O_{k,S})=C_P'(k) \cap \mathbb{A}^N(\O_{k,S})$ is infinite.  Since $C_0$ is not a component of any member of $\Lambda$, $C_P$ varies as $P$ varies, and it is clear that there exists a Zariski-dense set of $(D,S)$-integral points on $\pp^2$.
\end{proof}

Finally, we recall a result of Beukers \cite{beukers} on integral points on the complement of a nonsingular plane cubic.
\begin{theorem}[Beukers]\label{thm:beukers}
Let $C$ be a nonsingular projective cubic plane curve defined over a number field $k$.  Then integral points on $\mathbb{P}^2\setminus C$ are potentially dense.
\end{theorem}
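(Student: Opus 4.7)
The plan is to exploit the elliptic curve structure of $C$ together with an iterative secant-and-tangent construction in order to produce a Zariski-dense set of $S$-integral points on $\mathbb{P}^2 \setminus C$, in the spirit of Beukers's original argument.

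After a finite extension $L/k$, I arrange that $C$ has an $L$-rational flex $O$, so that $C = E$ becomes an elliptic curve with identity $O$; after a further extension, I may assume $E(L)$ has positive Mordell--Weil rank and fix a non-torsion point $Q \in E(L)$. Choose a cubic defining equation $F \in \mathcal{O}_{L,S}[x,y,z]$ for $C$, and enlarge the finite set $S$ of places of $L$ to contain all archimedean places, all primes of bad reduction of $E$, and all primes dividing the denominators of the coordinates of $Q$. Then $(\mathbb{P}^2 \setminus C)(\mathcal{O}_{L,S})$ corresponds, up to the usual scaling ambiguity, to primitive triples $(x,y,z) \in \mathcal{O}_{L,S}^3$ with $F(x,y,z) \in \mathcal{O}_{L,S}^*$.

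Starting from any initial such triple $P_0 = (x_0,y_0,z_0)$, the plan is to construct new $S$-integral triples by a secant construction on $E$: for each $P \in E(L)$, the line $\ell(P_0,P) \subset \mathbb{P}^2$ meets $C$ at $P$ together with two residual points $P',P'' \in E(\bar L)$ satisfying $P'+P'' = -P$ in the group law. Using the explicit addition formulas on $E$ and exploiting the fact that $Q$ has good reduction outside $S$, one obtains an algebraic iteration that, from one $S$-integral triple, produces a new $S$-integral triple lying on $\ell(P_0, nQ) \setminus C$ for each $n$. Because $Q$ is non-torsion, the multiples $\{nQ\}$ are Zariski-dense in $C$, so the lines $\ell(P_0,nQ)$ sweep out a Zariski-dense subset of the pencil of lines through $P_0$, and the accumulated triples form a Zariski-dense subset of $\mathbb{P}^2 \setminus C$.

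The principal obstacle is to ensure that this construction really produces infinitely many $S$-integral triples with $F$-values uniformly in $\mathcal{O}_{L,S}^*$. A naive application of Siegel's theorem to each line $\ell(P_0,nQ) \setminus C$ --- a rational affine curve with three points at infinity --- only yields finitely many $S$-integral points per line. Beukers sidesteps this by producing the new triples directly from the addition formula on $E$: after enlarging $S$ to absorb its denominators via N\'eron model theory, this addition formula sends $S$-integral inputs to $S$-integral outputs, so the iteration produces an infinite algebraic family of $S$-integral triples. The Zariski-density of the resulting set is then verified by a dynamical argument using the density of $\{nQ\}$ in $E$ and the fact that the closed orbit of the iteration cannot be contained in a proper Zariski-closed subset of $\mathbb{P}^2$ other than $C$ itself.
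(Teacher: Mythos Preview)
The paper does not prove this statement independently; it simply invokes Beukers's published theorem, whose hypotheses are that $C$ has a $k$-rational flex $O$ and that the tangent line to $C$ at $O$ is not a component of the reduction of $C$ modulo any prime outside $S$, and then observes that both conditions hold after enlarging $k$ and $S$.  Note in particular that no positive-rank assumption appears; the flex tangent, not a non-torsion point $Q$, is the engine of Beukers's construction.

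Your sketch, by contrast, has a genuine gap at its central step.  You assert that ``the addition formula on $E$ \ldots\ sends $S$-integral inputs to $S$-integral outputs'' and that iterating it yields, for each $n$, a new $S$-integral triple on $\ell(P_0,nQ)\setminus C$.  But the addition law on $E$ produces points \emph{on} $E=C$, not in $\mathbb{P}^2\setminus C$; it does not act on $S$-integral triples off $C$ at all.  You never specify what map of $\mathbb{P}^2\setminus C$ (or of the affine cone $\{F=\text{unit}\}\subset\mathbb{A}^3$) you are actually iterating, nor why its output avoids $C$.  As you yourself observe, each line $\ell(P_0,nQ)$ meets $C$ in three points, so $\ell(P_0,nQ)\setminus C$ has only finitely many $S$-integral points; the point $P_0$ is one of them for every $n$, and nothing in your argument produces a second one.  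Beukers's actual mechanism is an explicit polynomial recursion on $\mathcal{O}_{L,S}$-triples, built from the flex tangent (where $F$ restricts to a perfect cube of a linear form) and preserving the $F$-value up to units; the hypothesis on the reduction of the tangent line is exactly what makes this recursion integral outside $S$.  Your secant-through-$nQ$ description does not capture this, and as written the proof does not go through.
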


More precisely, Beukers \cite[Th.\ 3.3]{beukers} proves that $S$-integral points on $\mathbb{P}^2\setminus C$ are Zariski dense if $C$ has a $k$-rational flex $F$ and the tangent line to $C$ through $F$ is not a component of $C$ modulo any prime outside $S$.  Since this condition is clearly satisfied for large enough $k$ and $S$, we obtain the potential density of Theorem \ref{thm:beukers}.

\section{Structure theorems for affine subsets of $\mathbb{P}^2$}

In this section, we recall various results classifying affine subsets of $\mathbb{P}^2$ via the logarithmic Kodaira dimension.  Since we will use Lang-Vojta's conjecture (Conjecture \ref{VC}) to handle surfaces of log general type ($\bar{\kappa}=2$), we will only be interested in the remaining three possibilities for a surface: $\bar{\kappa}=-\infty,0,1$.  Throughout this section, we work over the complex numbers (or more generally, an algebraically closed field of characteristic $0$).

\subsection{$\bar{\kappa}=-\infty$}

The following structure theorem of Miyanishi and Sugie \cite[Theorem]{miya_sugie} (see also Kojima \cite[Theorem 1.1 (i)]{Koj2}) will be the key ingredient in analyzing integral points in this case.

\begin{theorem}[Miyanishi--Sugie, Kojima]\label{kojima_infty}
Let $V=\mathbb{P}^2\setminus D$, where $D$ is a reduced effective divisor on $\mathbb{P}^2$.  Suppose that $\bar{\kappa}(V)=-\infty$.  Then there exists a pencil $\Lambda$ on $\mathbb{P}^2$ such that:
\begin{enumerate}
\item Every member of $\Lambda$ is irreducible.
\item The pencil $\Lambda$ has at most two multiple members.  If it has two distinct multiple members $aF$ and $bG$, then $\gcd(a,b)=1$.
\item The divisor $D$ is a union of irreducible components of members of $\Lambda$ .
\item The pencil $\Lambda$ has a unique base point $P_0$, and for a general member $C\in \Lambda$, $C\setminus \{P_0\}\cong \mathbb{A}^1$.
\item Let $D'$ be the union of $D$ and the multiple members of $\Lambda$.  Let $r$ be the number of irreducible components of $D'$.  Then $$\mathbb{P}^2\setminus D'\cong \mathbb{P}^2\setminus \{\text{$r$ lines through a single point $P$}\}.$$
\end{enumerate}
\end{theorem}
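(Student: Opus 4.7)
The plan is to invoke the fundamental theorem of Miyanishi--Sugie on affine surfaces: a smooth affine surface $W$ has $\kbar(W) = -\infty$ if and only if $W$ admits an $\mathbb{A}^1$-fibration $f: W \to B$ over a smooth curve $B$. First I would apply this to $V = \pp^2 \setminus D$; since $V$ is a rational affine surface and the general fiber is $\mathbb{A}^1$, the base $B$ must be a smooth affine rational curve, so $B \cong \pp^1 \setminus \{q_0, q_1, \ldots, q_s\}$ for some $s \geq 0$.

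Next, I will compose $f$ with the inclusion $B \hookrightarrow \pp^1$ to obtain a rational map $\phi: \pp^2 \dashrightarrow \pp^1$, and take $\Lambda$ to be the corresponding linear pencil of plane curves. Since $\Pic(\pp^2) = \zz \cdot H$, all members of $\Lambda$ have the same degree $d$. The general member is the closure of a general $\mathbb{A}^1$-fiber of $f$, hence irreducible, and meets the boundary $\pp^2 \setminus V$ only at the unique ``end'' of that affine fiber; this produces a distinguished point $P_0$ at which any two general members intersect with total multiplicity $d^2$ by B\'ezout, so $P_0$ will be the unique base point of $\Lambda$, establishing (d).

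For (a) and (c), I would argue that every irreducible component of a member of $\Lambda$ meeting $V$ must itself be a fiber component of $f$ and is therefore either isomorphic (together with $P_0$) to $\mathbb{A}^1$ or is contained in $D$; the irreducibility of \emph{every} member and the condition that $D$ consist of fiber components would then follow from a case analysis of how a fiber of $f$ can degenerate, carried out on a resolution of indeterminacy $\sigma: X \to \pp^2$ of $\phi$ using the combinatorics of the exceptional chains on $X$. For (b), the bound of at most two multiple members with coprime multiplicities reflects a constraint on $\mathbb{A}^1$-fibrations whose total space is an open subset of $\pp^2$: I would analyze the divisor class group of $V$ and use the cyclicity of $\Pic(\pp^2)$, together with the rational equivalence relations imposed by distinct multiple fibers, to force the $\gcd$ of multiplicities to be $1$ and to cap the number of multiple fibers at two.

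Finally, (e) will follow by observing that after removing $D'$, the restriction $\phi: \pp^2 \setminus D' \to B'$ becomes an $\mathbb{A}^1$-fibration over $B' \cong \pp^1 \setminus \{r \text{ points}\}$ with no multiple or degenerate fibers; since $B'$ is affine, this $\mathbb{A}^1$-bundle trivializes, giving $\pp^2 \setminus D' \cong \mathbb{A}^1 \times (\pp^1 \setminus \{r \text{ points}\})$, which is visibly isomorphic to $\pp^2$ minus $r$ lines through a common point via projection from that point. The main obstacle will be the boundary analysis underlying (a) and (b): the structure of singular and multiple fibers must be read off the dual graph of a minimal SNC compactification of $V$, and constraining this graph sufficiently to rule out additional multiple fibers or reducible members is where most of the technical work lies.
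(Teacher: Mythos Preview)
The paper does not prove this theorem. It is quoted as a structure theorem from the literature, with the attribution ``Miyanishi--Sugie, Kojima'' and the citations \cite[Theorem]{miya_sugie} and \cite[Theorem 1.1 (i)]{Koj2}; no argument is given in the paper itself. So there is no ``paper's own proof'' to compare your proposal against.

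That said, your outline is broadly in the spirit of how the result is established in those references: one starts from the Miyanishi--Sugie characterization of $\kbar=-\infty$ via the existence of an $\mathbb{A}^1$-fibration, passes to a minimal SNC compactification, and reads off the pencil $\Lambda$, its base point, and the constraints on multiple members from the dual graph of the boundary. Your honest flagging of the main obstacle is accurate: the real content lies in the combinatorial analysis of the boundary and of the singular/multiple fibers on a resolution, and your sketch does not yet supply that analysis. In particular, the claim that \emph{every} member of $\Lambda$ is irreducible (not just the general one) and the bound of at most two multiple members with coprime multiplicities are not consequences of the soft arguments you give; they require the detailed peeling/contraction arguments carried out in \cite{miya_sugie} and refined in \cite{Koj2}. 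If you intend to include a proof rather than a citation, you would need to reproduce that part of the argument in full.
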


\subsection{$\bar{\kappa}=0$}

A structure theorem of Kojima is the key ingredient in this case.
\begin{theorem}[Kojima {\cite[Theorem 1.1 (ii)]{Koj2}}]\label{kojima_0}
Let $V=\mathbb{P}^2\setminus D$, where $D$ is a reduced effective divisor on $\mathbb{P}^2$. If $\bar{\kappa}(V)=0$, then either $D$ is a nonsingular cubic curve or $V$ contains an open subset isomorphic to $\mathbb{G}_m\times \mathbb{G}_m\cong \mathbb{P}^2\setminus \{\text{$3$ lines in general position}\}$.
\end{theorem}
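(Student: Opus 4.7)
The strategy is to apply the log minimal model program to a log resolution of $(\mathbb{P}^2,D)$ and then invoke the classification of smooth rational log Calabi--Yau surfaces. First, I would take a minimal log resolution $\pi:X\to\mathbb{P}^2$ so that $\bar D := \pi^{-1}(\Supp D)$ is simple normal crossings. Since the birational morphism $\pi$ is an isomorphism over $\mathbb{P}^2\setminus D$, we have $V \cong X\setminus \bar D$, and so by definition $\kappa(X, K_X+\bar D) = \bar\kappa(V) = 0$.

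Next, I would run the log MMP on the pair $(X, \bar D)$ by contracting $(K_X+\bar D)$-negative extremal rays---in dimension two these are precisely $(-1)$-curves $E$ with $E\cdot \bar D \le 1$---until arriving at a log minimal model $(X',D')$ on which $K_{X'}+D'$ is nef. Combined with $\bar\kappa = 0$, the log abundance theorem for surfaces forces $K_{X'}+D'\equiv_{\qq} 0$; that is, the output of the MMP is a log Calabi--Yau pair. The surface $X'$ is smooth and rational because $X\to X'$ is dominated by $X\to\mathbb{P}^2$.

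Then I would invoke the classification of smooth rational log Calabi--Yau pairs $(X',D')$ with $D'$ reduced: either $D'$ is a smooth irreducible anticanonical curve (necessarily of arithmetic genus $1$), or $D'$ is a cycle (or chain) of smooth rational curves whose components realize an anticanonical class. If we are in the first alternative and moreover no MMP contractions occurred, then $(X,\bar D) = (X',D') = (\pp^2, D)$ with $D$ a smooth plane cubic, which is the first conclusion. In the remaining cases $X'\setminus D'$ is a toric variety (or contains one densely) that has $\mathbb{G}_m\times\mathbb{G}_m$ as a Zariski open, as is standard for complements of anticanonical cycles on rational surfaces. Since the birational map $X\dashrightarrow X'$ and the resolution $\pi$ are both isomorphisms over the locus $V$ away from the contracted and exceptional divisors, this torus embedding pulls back to an open immersion $\mathbb{G}_m\times\mathbb{G}_m \hookrightarrow V$, giving the second alternative.

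The main obstacle is the third step: carrying out the classification of rational log Calabi--Yau pairs and carefully tracking which components of $\bar D$ were introduced by $\pi$ (and therefore contractible without affecting $V$) versus which descend from components of $D$ on $\pp^2$. In particular, one must verify that the sequence of MMP contractions does not inadvertently collapse pieces of the open set $V$, so that the toric open in $X'\setminus D'$ actually survives as an open subset of $V$ rather than mapping into $\Supp D$. A subsidiary technical point is showing that the smooth-anticanonical alternative on a higher Hirzebruch surface $\mathbb{F}_n$ cannot arise here without $D$ already being a smooth cubic on $\pp^2$, which amounts to analyzing the possible contraction sequences $X\to X'$ that start from a resolution of a plane curve.
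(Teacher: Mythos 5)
First, a point of comparison: the paper does not prove this statement at all --- it is quoted verbatim as Theorem~1.1(ii) of Kojima's paper, so there is no in-house proof to measure your attempt against. Your log-MMP strategy is at least in the same spirit as what is actually done in that literature (Kojima and Miyanishi work with ``peeling'' and almost minimal models of open surfaces, which is the open-surface incarnation of the log MMP you describe), so the blueprint is reasonable. However, as written the proposal has genuine gaps, and the two places where you defer the work are precisely where the content of the theorem lies.

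The most serious gap is your third step. The assertion that ``in the remaining cases $X'\setminus D'$ contains a dense $\mathbb{G}_m\times\mathbb{G}_m$, as is standard for complements of anticanonical cycles'' is not standard in the generality you invoke it: it is the toric-model theorem for Looijenga pairs (every smooth rational surface with a reduced anticanonical cycle acquires a toric model after corner blow-ups followed by blow-downs of interior exceptional curves, and interior blow-ups do not disturb the dense torus). To even be in a position to quote that, you must first show that the MMP output $D'$ really is an anticanonical \emph{cycle}: this requires checking that log canonicity is preserved along the contractions (so that $D'$ has at worst nodes --- a cuspidal anticanonical member, for instance, is not log canonical and must be excluded), that $D'\neq 0$, and that the alternative ``$D'$ smooth of genus one'' forces $D$ to have been a smooth plane cubic even when the contraction sequence $X\to X'$ is nontrivial; you explicitly leave this last point open. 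By contrast, the obstacle you flag about the MMP collapsing pieces of $V$ is resolvable and you should close it rather than list it: $V=\mathbb{P}^2\setminus D$ is affine, hence contains no complete curve, so a $(K_X+\bar D)$-negative $(-1)$-curve $E$ with $E\not\subset \bar D$ (which would have to satisfy $\bar D\cdot E=0$, not $\le 1$ as you wrote) cannot exist, every contracted curve is a component of $\bar D$, and connectedness of $\bar D=\pi^{-1}(\operatorname{Supp} D)$ guarantees that each such contraction leaves the complement unchanged. In short: a plausible outline whose ``main obstacle'' paragraph is not a technicality but the theorem itself.
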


\subsection{$\bar{\kappa}=1$}
\label{kbar1}

In this case, while we know from Kawamata \cite{kawamata} and Gurjar--Miyanishi \cite{gurjmiya} that there is a $\mathbb{G}_m$-fibration over $\pp^1$, there is not a complete classification of the affine surfaces $V\subset\mathbb{P}^2$ with $\kbar(V)=1$ that is sufficient for our purposes.  Instead, we give a variety of classification results under various hypotheses.  In each such case, in the next section we will prove results on integral points on the classified affine surfaces.

We begin by recalling a result of Wakabayashi which greatly restricts the possibilities of an irreducible plane curve $C$ with $\kbar(\mathbb{P}^2\setminus C)<2$.

\begin{theorem}[Wakabayashi \cite{waka}]\label{thm:waka}
Let $C$ be an irreducible curve in $\pp^2$ and suppose that $\kbar(\pp^2 \setminus C) < 2$.  Then one of the following is true:
\begin{enumerate}
\item $C$ is a nonsingular cubic curve.\label{cubic}
\item $C$ is a rational curve with at most two singular points.  If $C$ has two singular points, then both singularities are cuspidal.
\end{enumerate}
In particular, if $\kbar(\pp^2 \setminus C)=1$, then $C$ is a rational curve with exactly one or two singular points.  In the latter case, both singularities are cuspidal.
\end{theorem}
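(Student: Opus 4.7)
The plan is to analyze $\kbar(\pp^2 \setminus C) = \kappa(X, K_X + D)$ on a minimal embedded resolution $\pi \colon X \to \pp^2$ of $C$ and derive the classification from the positivity of $K_X + D$. Writing $\pi^*C = \tilde C + \sum m_j E_j$ and $K_X = \pi^*K_{\pp^2} + \sum a_j E_j$, the SNC reduced preimage $D = \tilde C + \sum E_j$ satisfies
$K_X + D \sim \pi^*((d-3)H) + N$, where $N = \sum(a_j + 1 - m_j) E_j$ is exceptionally supported. Two intersection numbers drive the argument: $(K_X + D)\cdot \pi^*H = d - 3$, and by adjunction on the smooth curve $\tilde C$, $(K_X + D) \cdot \tilde C = 2g - 2 + r$, where $g$ is the geometric genus of $C$ and $r = \sum_p r_p$ is the total analytic branch count at the singular points of $C$.

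For $d \leq 3$ the theorem is checked directly. If $d \leq 2$, irreducibility forces $C$ smooth and $K_X + D = (d-3)H$ gives $\kbar = -\infty$ (case (ii) with no singularities). If $d = 3$ and $C$ is smooth, $K_X + D = 0$ and $\kbar = 0$, which is case (i); if $d = 3$ and $C$ is singular, the genus-degree formula forces exactly one singular point with $\delta = 1$, so $C$ is rational with a single singularity, falling into case (ii) (and one verifies $\kbar \leq 0$ by inspection).

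For $d \geq 4$ the divisor $\pi^*((d-3)H)$ is big and nef, and the question reduces to whether $N$ can destroy bigness of $K_X + D$. Combining the two intersection formulas with the genus-degree identity $2g - 2 = d(d-3) - 2\sum_p \delta_p$, one first shows that $g \geq 1$ forces $K_X + D$ to be big; hence $C$ must be rational. For rational $C$, $(K_X + D)\cdot \tilde C = r - 2$, and a local analysis at each singularity bounds the contribution of $N$ in terms of $r_p$ and $\delta_p$. The bookkeeping yields $r \leq 2$; moreover, if there are two singular points, each must be unibranch (cuspidal), since a non-unibranch singularity gives too little negative contribution to $N$ locally (a simple node, for instance, contributes exactly $a + 1 - m = 1 + 1 - 2 = 0$) to prevent $K_X + D$ from being big when combined with any second singularity.

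The main obstacle is making this last step of local bookkeeping precise: at each singular point one must control $N|_p$ via the dual graph and multiplicities of the minimal embedded resolution and then combine the local bounds with the global intersection constraints to rule out non-cuspidal configurations. Concretely, this involves tracing $a_j + 1 - m_j$ along the resolution chain for each plane curve singularity type and showing that the negative contribution per branch is maximized at cusps. Once these inequalities are in place, the three conclusions follow from the numerical data. The final ``in particular'' statement is then immediate: $\kbar = 1$ rules out case (i) (which gives $\kbar = 0$) as well as smooth $C$ (smooth curves give $\kbar \in \{-\infty, 0, 2\}$), leaving $C$ rational with exactly one or two singularities, both cuspidal in the two-point case.
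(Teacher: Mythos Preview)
The paper does not give its own proof of this statement; it is quoted as a result of Wakabayashi \cite{waka} and used as input. So there is no in-paper argument to compare against.

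As for your outline itself: the framework is correct and is essentially the route Wakabayashi takes. The identity $K_X + D \sim \pi^*((d-3)H) + \sum(a_j+1-m_j)E_j$, the intersection numbers $(K_X+D)\cdot\pi^*H = d-3$ and $(K_X+D)\cdot\tilde C = 2g-2+r$, and the disposal of $d\le 3$ are all accurate. But what you have written is a plan, not a proof. You explicitly flag the decisive step as undone: the local analysis controlling the coefficients $a_j+1-m_j$ along the resolution tree of each singularity, and the comparison showing that cusps extremize the negative contribution per branch. That is precisely where the content of Wakabayashi's argument lies; without it, the assertions ``$g\ge 1$ forces $K_X+D$ big'' and ``$r\le 2$, with both points unibranch if $r=2$'' are unsupported. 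For instance, your parenthetical about a node ($a+1-m=0$) is correct but does not by itself rule out a node paired with another singularity, since the other singularity may contribute negatively; you need a uniform local inequality, not an example.

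If you want to complete this, one clean way is to run Zariski decomposition (or the peeling/bark construction of Fujita--Miyanishi--Tsunoda) on $K_X+D$ and show that the negative part absorbs at most what is permitted by the branch count; alternatively, follow Wakabayashi directly and bound the discrepancy-type coefficients inductively along the blow-up sequence at each singular point.
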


It is elementary that in case \ref{cubic}, $\kbar(\mathbb{P}^2\setminus C)=0$, and if $C$ is a nonsingular rational curve, then $\kbar(\mathbb{P}^2\setminus C)=-\infty$.  Tsunoda \cite{tsunoda} showed that if $C$ has two cuspidal singularities then $\kbar(\mathbb{P}^2\setminus C)=1$ or $2$ and that $\kbar(\mathbb{P}^2\setminus C)\neq 0$ if $C$ is a rational curve with a single cuspidal point.

We now describe in more detail classifications of $V=\mathbb{P}^2\setminus D$ with $\kbar(V)=1$ in the following cases:
\begin{enumerate}
\item $D$ is a rational curve with two cusps and no other singularities.
\item $D$ is a rational curve with a single cusp and no other singularities.
\item $D$ is an irreducible curve with $\deg D\leq 5$ with exactly one singularity.
\item $D$ is a union of a line and an irreducible curve.
\end{enumerate}

\begin{remark}\label{rem:strnotdone}
From Theorem \ref{thm:waka}, we see that the cases not treated by (a)--(d) above are when $D$ is irreducible of degree at least $6$ having exactly one singularity which is not a cusp, or when $D$ is reducible and either the number of components is at least $3$ or none of the components is a line.
\end{remark}

\subsubsection{Complement of a rational bicuspidal curve}
\label{bicuspidal}
In this case, Tono \cite[Theorem 4.1.2]{tonothesis} has a complete classification up to projective equivalence.

\begin{theorem}[Tono]\label{tonobicusp}
Let $D$ be a rational bicuspidal curve such that $\kbar(\pp^2\setminus D) = 1$.  Given $\vec v = (v_1,\ldots, v_{n+1}) \in \mathbb{C}^{n+1}$, denote by $J_{\vec v}(x,y,z)$ the polynomial $x^n z + \sum_{j=1}^{n+1} v_j x^{n+1-j} y^j$.  Then there exists a sequence $D_0, \ldots, D_s$ of curves satisfying:
\begin{itemize}
\item[\emph{(i)}] $D_0$ is projectively equivalent to $D$.
\item[\emph{(ii)}] $D_s$ is defined by $F_1 + F_2 = 0$, where $F_1$ and $F_2$ are one of the following three possibilities:
\begin{align}
&F_1 = y^{\alpha_1}, \qquad  F_2 = x^{\alpha_1-\alpha_0} ( z+ a y)^{\alpha_0} && \label{eq:bicusp1}\\
&F_1 = (J_{\vec a}(x,y,z))^{\alpha_0}, \qquad F_2 = x^{(n+1)\alpha_0 - \alpha_1} y^{\alpha_1}, && \alpha_1<(n+1)\alpha_0\label{eq:bicusp2}\\
&F_1 = y^{\alpha_1}, \qquad F_2 =  x^{\alpha_1 - (n+1)\alpha_0} (J_{\vec a}(x,y,z))^{\alpha_0}, && (n+1)\alpha_0<\alpha_1\label{eq:bicusp3}
\end{align}
where $1<\alpha_0<\alpha_1$ with $\mathrm{gcd}(\alpha_0,\alpha_1) = 1$, $a\in \mathbb{C}$, and $\vec a \in \mathbb{C}^n \times \mathbb{C}^*$ with $n\ge 1$.
\item[\emph{(iii)}] For each $i=1,\ldots, s$, $D_{i-1} = \tau_{\vec{a_i}}^{-1}(D_i)$, where $\tau_{\vec{a_i}}$ is a De Jonqui\`eres transformation
    $(x,y,z) \mapsto (x^{m_i+1}, J_{\vec{a_i}}(x,y,z), x^{m_i} y)$ for $\vec{a_i}\in \mathbb{C}^{m_i} \times \mathbb{C}^*$ with $m_i\ge 1$.
\end{itemize}
Conversely, any curve $D$ satisfying the above conditions defines a rational bicuspidal curve with $\kbar(\mathbb{P}^2\setminus D)=1$.
\end{theorem}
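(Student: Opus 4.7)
My plan is to exhibit the sequence $D_0, \ldots, D_s$ by an inductive descent, at each step applying a De Jonqui\`eres transformation adapted to a canonical pencil, and to identify the three normal forms \eqref{eq:bicusp1}--\eqref{eq:bicusp3} as the exhaustive list of terminal shapes. The converse direction will then follow from a direct log Kodaira dimension computation on the minimal resolution of each of the three families.

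First, I would extract the pencil $\Lambda$. Take the minimal embedded resolution $\pi : W \to \mathbb{P}^2$ of $(\mathbb{P}^2, D)$ and let $\widetilde D = \pi^{-1}(D)_{\mathrm{red}}$; then $\kbar(\mathbb{P}^2 \setminus D) = \kappa(K_W + \widetilde D) = 1$. By Kawamata together with Gurjar--Miyanishi, the Iitaka fibration of $K_W + \widetilde D$ is a $\mathbb{C}^{\ast}$-fibration $f: W \to \mathbb{P}^1$, which descends via $\pi$ to a pencil $\Lambda$ of rational curves on $\mathbb{P}^2$ whose generic member meets $D$ in at most two points. Irreducibility and bicuspidality of $D$ force $\Lambda$ to have a unique base point, located at one of the two cusps of $D$. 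After a projective change of coordinates I place this cusp at $[0:0:1]$, the other at $[0:1:0]$, and the relevant tangent line along $x = 0$, which is exactly the setup compatible with De Jonqui\`eres maps of the form $\tau_{\vec a}(x,y,z) = (x^{m+1}, J_{\vec a}(x,y,z), x^m y)$.

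The heart of the argument is the inductive descent. I would show that unless $D$ is already in one of the three normal forms, the weighted dual graph of the embedded resolution produces a singular member of $\Lambda$ whose local behaviour at $[0:0:1]$ matches the ramification of some $\tau_{\vec a}$. Setting $D_{i-1} = \tau_{\vec{a_i}}^{-1}(D_i)$, the preimage is again bicuspidal with a pencil of the same type, but with a strictly smaller complexity invariant---for instance the total length of the Hamburger--Noether expansions at the two cusps, equivalently the number of blow-ups in the minimal embedded resolution. The descent therefore terminates in finitely many steps at some $D_s$, and a case analysis of the possible terminal configurations---split according to whether both cusps are ``monomial'' in the chosen coordinates or one is captured by a single form $J_{\vec a}^{\alpha_0}$---produces exactly \eqref{eq:bicusp1}, \eqref{eq:bicusp2}, or \eqref{eq:bicusp3}, with $\gcd(\alpha_0,\alpha_1) = 1$ forced by the irreducibility of $D$.

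The main obstacle is the inductive step, specifically proving both the existence of a simplifying $\tau_{\vec a}$ outside the three normal forms and the monotonicity of a chosen complexity invariant. This requires a fine-grained correspondence between the weighted dual graphs of the two cusps, the multiplicities of the degenerate fibers of $\Lambda$, and the parameters $(\vec a, m)$ of the De Jonqui\`eres map, and is where the bicuspidal hypothesis is most essential: admitting any further singularity of $D$ would destroy the rigid combinatorial match on which the descent depends.
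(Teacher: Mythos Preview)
The paper does not prove this theorem at all: it is quoted verbatim as a classification result of Tono, with the citation ``\cite[Theorem 4.1.2]{tonothesis}'' and no accompanying argument. The paper uses Tono's classification as a black box in the proof of Theorem~\ref{thm:bicuspidal}, but makes no attempt to reprove or even sketch it. So there is no ``paper's own proof'' against which to compare your proposal.

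As for the proposal itself, the high-level architecture you describe---extract the $\mathbb{C}^*$-fibration from the Iitaka map, locate the pencil's base point at a cusp, and descend via De Jonqui\`eres transformations with a decreasing complexity invariant---is indeed the shape of Tono's original argument. However, what you have written is a plan rather than a proof: the entire technical content lies in the step you flag as ``the main obstacle,'' namely matching the dual-graph data of the resolution to the parameters $(\vec a, m)$ of a specific $\tau_{\vec a}$ and verifying that the invariant strictly drops. That step occupies the bulk of Tono's thesis chapter and requires a detailed case analysis of the Puiseux/Hamburger--Noether data at both cusps; nothing in your sketch indicates how to carry it out. Your terminal case analysis is also only asserted, not performed. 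So the proposal correctly identifies the strategy but does not constitute a proof, and in any case there is nothing in the present paper to compare it to.
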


\subsubsection{Complement of a rational unicuspidal curve}

Here we quote a different result of Tono.  In \cite[Theorem 2]{tono2}, Tono classifies, into three different cases, unicuspidal plane curves whose complements have log Kodaira dimension $1$.

\begin{theorem}[Tono]
\label{Tonounicuspidal}
Let $C$ be a rational unicuspidal curve with $\kbar(\mathbb{P}^2\setminus C)=1$.  Then one of the following holds.
\begin{itemize}
\item[\emph{(i)}]  There exist $n, s\geq 2$ and $a_2,\ldots, a_s\in \mathbb{C}$ with $a_s\neq 0$ such that $C$ is projectively equivalent to the curve
\begin{align*}
\left(\left(f^{s-1}y+\sum_{i=2}^sa_if^{s-i}x^{(n+1)i-n}\right)^{\mu_A}-f^{\mu_G}\right) \,\, \Big/ \,\,x^n=0,
\end{align*}
where $f=x^nz+y^{n+1}$, $\mu_A=n+1$, and $\mu_G=(n+1)(s-1)+1$.\label{unia}
\item[\emph{(ii)}] There exists $n\geq 2$ such that the curve $C$ is projectively equivalent to the curve
\begin{align*}
\big((g^ny+x^{2n+1})^{\mu_A}-(g^{2n}z+2x^{2n}yg^n+x^{4n+1})^{\mu_G}\big) \,\,/\,\, g^n=0,
\end{align*}
where $g=xz-y^2$, $\mu_A=4n+1$, and $\mu_G=2n+1$.
\item[\emph{(iii)}] There exist a positive integer $n\geq 2$, a positive integer $s$, and $a_1,\ldots, a_s\in \mathbb{C}$ with $a_s\neq 0$ such that $C$ is projectively equivalent to the curve
\begin{align*}
\left(\left(h^{2s-1}(g^ny+x^{2n+1})+\sum_{i=1}^sa_ih^{2(s-i)}g^{mi-n}\right)^{\mu_A}-h^{\mu_G}\right)\,\, \Big/ \,\,g^n=0,
\end{align*}
where $m = \mu_A=4n+1$, $g=xz-y^2$, $h=g^{2n}z+2x^{2n}yg^n+x^m$, and $\mu_G=2((4n+1)s-n)$.\label{unic}
\end{itemize}
Conversely, any curve $C$ defined in 
\emph{(i)}--\emph{(iii)} is a rational unicuspidal curve satisfying $\kbar(\mathbb{P}^2\setminus C)=1$.
\end{theorem}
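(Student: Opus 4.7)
The plan is to follow the strategy that underlies Tono's classification of log surfaces $(X,D)$ with $\kbar = 1$ arising from cuspidal plane curves. First I would pass to the minimal embedded resolution $\pi\colon X \to \pp^2$ of $C$ and set $D = \pi^{-1}(C)_{\mathrm{red}}$, so that $\kbar(\pp^2\setminus C) = \kappa(X, K_X + D) = 1$. By Kawamata's structure theorem for log surfaces of log Kodaira dimension $1$, $(X,D)$ admits a $\pp^1$-fibration $f\colon X \to \pp^1$ whose restriction to $\pp^2\setminus C$ is a $\mathbb{G}_m$-fibration; equivalently, a general fiber of $f$ meets $D$ transversally in exactly two points, and the arithmetic of the multiple and degenerate fibers is highly constrained.

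Next I would analyze the weighted dual graph of $D$. Since $C$ is rational and unicuspidal, $D$ is a tree of smooth rational curves consisting of the strict transform $\widetilde C$ together with the exceptional chain resolving the unique cusp, whose shape is dictated by the Puiseux pairs of the singularity. Applying the peeling procedure of Fujita--Miyanishi--Sugie, I would strip off superfluous $(-1)$-tips and twigs to pass to a minimal boundary $D^\sharp$ without changing $\kappa(K_X + D)$, and then combine this with the existence of $f$ to obtain a short combinatorial list of admissible dual graphs. Here the genus formula for $\widetilde C$, the intersection numbers on $X$, and the constraint that $f$ has at most two multiple fibers force the cusp's multiplicity sequence into one of three shapes, distinguished by how the exceptional chain of the cusp meets a section of $f$ contained in $D$.

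Once the three combinatorial types are isolated, I would reconstruct $C$ explicitly by reversing the birational geometry. Starting from a standard model with an evident $\mathbb{G}_m$-fibration (two distinguished sections meeting a chain of fiber components), one performs a sequence of de Jonqui\`eres transformations $(x,y,z) \mapsto (x^{m+1}, J_{\vec a}(x,y,z), x^m y)$ recording the data of the dual graph; in each of the three cases the successive transformations produce the polynomials $f = x^n z + y^{n+1}$, $g = xz - y^2$, and $h = g^{2n} z + 2 x^{2n} y g^n + x^{4n+1}$, and the parameters $n,s,a_i$ encode the remaining freedom. The explicit equations in \emph{(i)}--\emph{(iii)} then drop out as the images of the original $C$ under these reconstructions. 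The converse direction is a direct check: for each curve listed, resolve its cusp explicitly, compute the boundary divisor, verify that the dual graph is of the predicted shape, and compute $\kbar$ from $\kappa(K_X + D)$ by locating the induced $\mathbb{G}_m$-fibration.

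The main obstacle is the combinatorial classification step: one must rule out every dual graph compatible with a rational unicuspidal $C$ and a $\mathbb{G}_m$-fibration on $\pp^2 \setminus C$ other than the three listed shapes. This requires delicate intersection-theoretic bookkeeping on $X$ to control how the cusp's exceptional chain interacts with the multiple fibers of $f$ and with peeling, and it is where potential Bogomolov--Miyaoka--Yau-type inequalities (or the explicit invariants used by Tono) enter to eliminate borderline configurations. Once this enumeration is complete, the explicit parametric families and the converse verification are essentially mechanical.
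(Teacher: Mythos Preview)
The paper does not prove this theorem; it is stated without proof as a quoted result from Tono \cite[Theorem 2]{tono2}. So there is no ``paper's own proof'' to compare your proposal against. Your outline is a reasonable high-level sketch of the strategy underlying Tono's original classification (embedded resolution, Kawamata's structure theorem, peeling to a minimal boundary, combinatorial analysis of dual graphs, and reconstruction via de Jonqui\`eres transformations), but since the present paper treats this as a black box, any detailed comparison would have to be made against \cite{tono2} rather than against this paper.
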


\subsubsection{Complement of a rational curve of degree $\leq 5$ with a unique singularity}

From Wakabayashi's theorem (Theorem \ref{thm:waka}), when $D$ is irreducible and $\kbar = 1$, $D$ has to be a rational curve having two cusps or having one singular point.  The former case was already treated in Section \ref{bicuspidal}.  The latter case with $\deg D \le 5$ has been classified by Yoshihara \cite{yoshi}.

\begin{theorem}\label{yoshihara}
Let $C$ be a rational plane curve with a unique singularity, $\deg C\leq 5$, and $\kbar(\mathbb{P}^2\setminus C)=1$.  Let $e$ be the multiplicity at the singular point and let $N$ be the number of points in the normalization above the singular point. Then $(e,N)\in \{(3,3), (4,3), (4,4)\}$.
\end{theorem}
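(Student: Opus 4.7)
The plan is a case analysis on the triple $(d, e, N)$, combining genus and multiplicity constraints with direct $\kbar$ computations on explicit resolutions.

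First I would apply the genus-degree formula: since $C$ is rational of degree $d$, the unique singular point $P$ has delta-invariant $\delta_P = \binom{d-1}{2}$. Noether's decomposition $\delta_P = \sum_Q \binom{e_Q}{2}$ (summed over $P$ and its infinitely near singular points), together with $N \le e$ and $e \le d-1$, produces a finite list of candidates. The case $d = 2$ forces $\delta = 0$, a contradiction. For $d = 3$, $\delta = 1$ forces $(e, N) \in \{(2, 1), (2, 2)\}$; for $d = 4$, $\delta = 3$ forces $e \in \{2, 3\}$; for $d = 5$, $\delta = 6$ forces $e \in \{2, 3, 4\}$.

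For each candidate I would construct a minimal embedded resolution $\pi \colon X \to \mathbb{P}^2$ and compute $\kbar(\mathbb{P}^2 \setminus C) = \kappa(X, K_X + (\pi^{-1}C)_{\mathrm{red}})$ directly. In $d = 3$, both the nodal and cuspidal cubic complements admit a $\mathbb{G}_a$- or $\mathbb{G}_m$-fibration, giving $\kbar = -\infty$, so this degree is excluded. In $d = 4$, the configurations with $e = 2$ (topologically $A_5$ and $A_6$) yield $\kbar = 2$ by a Zariski-decomposition computation, while $(e, N) = (3, 1)$ and $(3, 2)$ admit a pencil of lines through $P$ whose restriction is a $\mathbb{G}_a$-fibration on the complement, hence $\kbar = -\infty$ by Theorem \ref{kojima_infty}; only the ordinary triple point $(3, 3)$ survives, with $\kbar = 1$ verified via the pencil of lines through $P$, which becomes a $\mathbb{G}_m$-fibration on the complement. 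In $d = 5$ the same strategy rules out every $(e, N)$ with $e \in \{2, 3\}$: each admissible Enriques diagram with $\delta_P = 6$ is resolved explicitly, and either an affine or quasi-affine fibration is exhibited (giving $\kbar = -\infty$) or positivity of $K_X + D$ is verified on the resolution (giving $\kbar = 2$). When $e = 4$, the equality $\binom{4}{2} = 6 = \delta_P$ forces the absence of infinitely near singular points, so the topological type is determined by $N \in \{1, 2, 3, 4\}$; the cases $N = 1, 2$ admit a linear pencil through $P$ yielding a $\mathbb{G}_a$-fibration on the complement and hence $\kbar = -\infty$, while $N = 3, 4$ are precisely the remaining cases, and direct resolution and computation of the Zariski decomposition of $K_X + D$ confirm $\kbar = 1$.

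The main obstacle is the bookkeeping for $d = 5$ with $e \in \{2, 3\}$: the list of admissible Enriques diagrams with $\delta_P = 6$ is substantial, including, for $e = 2$, the $A_{11}$ and $A_{12}$ cusps, and for $e = 3$ a variety of singularities with infinitely near double and triple points encoded in the proximity data. For each one must perform the embedded resolution, identify the dual graph of the boundary divisor on $X$, and compute the Zariski decomposition of $K_X + D$ to pin down $\kbar$ exactly rather than merely bound it. Once $e = 4$ is reached the analysis collapses, since the absence of infinitely near singularities reduces the classification to the branching index $N$ alone, and the three surviving cases $\{(3, 3), (4, 3), (4, 4)\}$ fall out cleanly.
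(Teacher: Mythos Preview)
The paper does not prove this theorem; it is quoted as a classification result of Yoshihara \cite{yoshi}. So there is no ``paper's own proof'' to compare against, only the citation.

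Your outline has the right shape (enumerate $(d,e,N)$ via the genus formula and compute $\kbar$ on an explicit resolution), but there is a recurring error that undermines several of your exclusions. You repeatedly assert that the pencil of lines through the singular point $P$ restricts to a $\mathbb{G}_a$-fibration on the complement when $N$ is small. This is false: for a curve of degree $d$ with a point $P$ of multiplicity $e=d-1$, a general line through $P$ meets $C$ in exactly one further point, so the general fiber is $\mathbb{P}^1\setminus\{P,Q\}\cong\mathbb{G}_m$, \emph{regardless} of $N$. A $\mathbb{G}_m$-fibration only gives $\kbar\le 1$, not $\kbar=-\infty$, so your exclusion of $(e,N)=(3,1),(3,2)$ for quartics and $(e,N)=(4,1),(4,2)$ for quintics does not follow from the stated argument. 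The conclusions happen to be correct (for the unibranch cases one can instead invoke Tono's classification in Theorem~\ref{Tonounicuspidal}, which shows no unicuspidal curve of degree $\le 5$ has $\kbar=1$; the two-branch cases need a separate argument), but the mechanism you give is wrong. Separately, you concede that the $d=5$, $e\in\{2,3\}$ analysis is left as unperformed bookkeeping; that is where most of the actual content of Yoshihara's argument lies, so as written this is a plan rather than a proof.
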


\subsubsection{Complement of a line and a curve}

Suppose now that $D$ is the union of a line and an irreducible curve.  After an automorphism of $\pp^2$, we may assume that the line is given by $Z=0$.  Let $f(x,y)$ be the (dehomogenized) irreducible polynomial defining the component of $D$ that is not the line $(Z=0)$.  In this case, Aoki \cite[Theorem 3.7, Lemmas 3.8--3.12]{aoki} has determined $f(x,y)$ for which $\kbar(\mathbb{A}^2\setminus (f(x,y)=0)) =1$, up to changes of coordinates in $x, y$.

\begin{theorem}[Aoki]
\label{Aoki}
Let $D$ be the union of the line $L$ defined by $Z=0$ and an irreducible curve $C$ defined by a homogeneous polynomial $\tilde{f}(X,Y,Z)$.  Let $f(x,y)=\tilde f(x,y,1)$.  Suppose that $\kbar(\mathbb{P}^2\setminus D)=1$.  Then after a suitable change of coordinates on $\mathbb{A}^2=\mathbb{P}^2\setminus L$, $f(x,y)$ is one of the following:
\begin{itemize}
\item[\emph{(i)}] $f(x,y)=x^ay^b+1$, where $\gcd(a,b)=1$ and $a,b>1$.
\item[\emph{(ii)}] $f(x,y)=x^a(x^ly+p(x))^b+1$, where $a>0, b>1, l>0, \gcd(a,b)=1, \deg p(x)<l$, and $p(0)\neq 0$.
\item[\emph{(iii)}] $f(x,y)=a_0(x)y+a_1(x)$, where $a_0(x)$ and $a_1(x)$ have no common factors,  $\deg a_1<\deg a_0$, and $a_0(x)$ has at least two distinct roots over $\mathbb{C}$.
\item[\emph{(iv)}] $f(x,y)=x^a-y^b$, where $a,b>1$ and $\gcd(a,b)=1$.
\end{itemize}
Conversely, the complement in $\mathbb{A}^2$ of any of the curves defined in
\emph{(i)}--\emph{(iv)} above satisfies $\kbar=1$.
\end{theorem}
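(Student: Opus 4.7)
The plan is to exploit the $\mathbb{G}_m$-fibration structure available whenever $\kbar=1$ (Kawamata \cite{kawamata}, Gurjar--Miyanishi \cite{gurjmiya}) and to normalize the resulting pencil via the Jung--van der Kulk description of $\operatorname{Aut}(\mathbb{A}^2)$ together with the Abhyankar--Moh--Suzuki theorem. First I would fix a $\mathbb{G}_m$-fibration $\phi \colon V \dashrightarrow B$ on $V = \pp^2 \setminus D$; since $V$ is rational and is an open subset of $\pp^2$, the base $B$ is rational, and after removing finitely many points one has $B \isom \mathbb{A}^1$. This fibration is induced by a pencil $\Lambda$ on $\pp^2$ whose generic member $C_t$ satisfies $C_t \setminus D \isom \mathbb{G}_m$ over $\overline{\mathbb{Q}}$.

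Next I would classify the combinatorial possibilities for how a general $C_t$ sits in $\pp^2 = \mathbb{A}^2 \cup L$ relative to the irreducible curve $C = (f=0)$. Since $C_t$ is rational with two ``points at infinity'' (the two ends of $\mathbb{G}_m$), these ends distribute among $L$ and $C$ in essentially four ways: both on $L$ with $C_t \cap C = \emptyset$, so $C$ is a multiple member of $\Lambda$ (giving case (iv)); both on $L$ with a prescribed tangency pattern against $C$ (giving case (i)); one end on $L$ and one on $C$ (giving case (ii)); or the generic fiber already lies in $\mathbb{A}^2$ as an $\mathbb{A}^1$ meeting $C$ in a single point (giving case (iii)). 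In each configuration, applying Abhyankar--Moh--Suzuki to a distinguished member of $\Lambda$ moves it onto a coordinate axis, and triangular Jonqui\`eres transformations of $\mathbb{A}^2$ clean up the remaining parameters, producing the normal forms in (i)--(iv). The numerical restrictions (coprimality of $a,b$, the conditions $\deg p < l$ and $p(0) \neq 0$, etc.) come from irreducibility of $f$ together with the requirement that $\Lambda$ have exactly the multiple-fiber structure dictated by $\kbar=1$.

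The main obstacle is the converse direction: verifying that each family in (i)--(iv) actually produces $\kbar = 1$ rather than $-\infty$, $0$, or $2$. For each family I would construct an explicit log resolution $\pi\colon Y \to \pp^2$ making $\pi^{-1}(\Supp D)$ simple normal crossings and then evaluate $\kappa(Y,\, K_Y + \widetilde{\pi^{-1}(D)})$ by exhibiting an explicit pluri-log-canonical section while bounding $h^0$ from above. The inequalities $a,b>1$ and $\gcd(a,b)=1$ in (i) and (iv), the positivity of $a,b,l$ and $p(0)\neq 0$ in (ii), and the condition that $a_0$ have at least two distinct roots in (iii), are precisely those needed to push $V$ out of the $\kbar \le 0$ classes classified by Miyanishi--Sugie (Theorem \ref{kojima_infty}) and Kojima (Theorem \ref{kojima_0}); the absence of further components in $D$, together with the presence of multiple members in $\Lambda$, simultaneously keeps $V$ from becoming log general type. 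Assembling these pluri-canonical estimates case by case is the technical heart of the argument.
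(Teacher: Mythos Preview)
The paper does not actually prove this statement: Theorem~\ref{Aoki} is quoted verbatim from Aoki \cite[Theorem~3.7, Lemmas~3.8--3.12]{aoki} as part of the collection of structure theorems in Section~4, and no argument is supplied. So there is no ``paper's own proof'' to compare your proposal against; the authors simply rely on Aoki's published result.

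That said, your sketch is broadly in the spirit of how such classifications are obtained---one does start from the Kawamata/Gurjar--Miyanishi $\mathbb{G}_m$-fibration and normalize using automorphisms of $\mathbb{A}^2$---but the four-way combinatorial split you describe is too coarse to serve as an actual proof. For instance, your case ``both ends on $L$ with $C_t\cap C=\emptyset$'' does not obviously force the normal form $x^a-y^b$, and the distinction between cases (i) and (ii) is not captured by where the two ends of $\mathbb{G}_m$ land but rather by the structure of the multiple fibers and the base locus of $\Lambda$. Aoki's proof (spread over a theorem and five lemmas) involves a careful analysis of the singular fibers of the $\mathbb{G}_m$-fibration on a log-smooth model, the combinatorics of the boundary divisor after resolution, and Fujita's theory of open surfaces---substantially more than the Abhyankar--Moh--Suzuki normalization step you outline. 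If you want to reconstruct the argument rather than cite it, you would need to engage with that level of detail; as written, your proposal is a plausible outline but not a proof.
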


\section{Integral points on $\mathbb{P}^2$}

In this section we prove results on integral points corresponding to the geometric classification results of the last section.

\subsection{$\bar{\kappa}=-\infty$}

Using Theorem \ref{pencildegen} and Theorem \ref{kojima_infty}, we completely classify integral points in the case $\kbar(\mathbb{P}^2\setminus D)=-\infty$, proving Theorem \ref{kbar_infty} from the introduction.

\begin{proof}[Proof of Theorem \ref{kbar_infty}]
We may assume that the divisor $D$ is reduced.  Let $r$ and $D'$ be as in Theorem \ref{kojima_infty}.  When $r\le 2$, by Theorem \ref{kojima_infty}, $\pp^2\setminus D'$ is isomorphic to $\mathbb P^2\setminus \{\text{one line}\} \isom \mathbb A^2$ or $\mathbb P^2 \setminus \{\text{two lines}\} \isom \mathbb G_m \times \mathbb A^1$.  In both cases, it is evident that the $(D',S)$-integral points are potentially dense, hence so are $(D,S)$-integral points.

Now suppose that $r\ge 3$. We claim that the gcd weight of $(D,\Lambda)$ is greater than $2$.  Since every member of $\Lambda$ is irreducible, we have the following three possibilities:
\begin{enumerate}
\item $D$ has one irreducible component and there are two distinct multiple members $aF$ and $bG$ of $\Lambda$, with $F$ and $G$ distinct from $D$.\label{casea}
\item $D$ has two irreducible components and there is at least one multiple member $aF$ of $\Lambda$ such that $F\not\subset \Supp D$.\label{caseb}
\item $D$ has at least three irreducible components.\label{casec}
\end{enumerate}
Recall that in the first case, $\gcd(a,b)=1$ by Theorem \ref{kojima_infty}(b).

We note that given a defining homogeneous polynomial $f$ for $D$ (over some number field $k$), one can effectively determine the existence of a pencil $\Lambda$ satisfying Theorem \ref{kojima_infty} and condition \ref{casea} or \ref{caseb} above.  First, by factoring $f$ (over $\bar{k}$), we can determine the number of irreducible components of $D$.  If $D$ has one irreducible component and \ref{casea} holds, then $D$ must be a member of $\Lambda$.  If $a,b>1$ are coprime divisors of $d=\deg D$, then we look at the equation
\begin{align*}
f(x,y,z)=\left(\sum_{\substack{i,j,k\\i+j+k=\frac{d}{a}}} a_{i,j,k}x^iy^jz^k\right)^a+\left(\sum_{\substack{i,j,k\\i+j+k=\frac{d}{b}}} b_{i,j,k}x^iy^jz^k\right)^b
\end{align*}
in the indeterminates $a_{i,j,k}$ and $b_{i,j,k}$.  Equating monomial coefficients yields a system of equations in $a_{i,j,k}$ and $b_{i,j,k}$ and we can (in principle) determine if the system of equations has a solution in $\bar{k}$ (e.g., using Gr\"obner bases).  Running over all possible integers $a$ and $b$, we can thus determine if there exist a pencil $\Lambda$ and $F$ and $G$ as in \ref{casea} (and if they exist, defining polynomials).  If $D$ has two irreducible components and \ref{caseb} holds, then an irreducible component of $D$ of maximal degree must be a member of $\Lambda$ (and some multiple of the other component must also be a member of $\Lambda$, yielding generators for $\Lambda$).  By the same argument as for case \ref{casea}, we can effectively determine the existence of a multiple member $aF\in \Lambda$ with $F\not\subset \Supp D$ (and an equation for $F$ if it exists).  Thus, the geometric objects required in applying Theorem \ref{pencildegen}\ref{pencildegenc} are effectively computable.

Now we compute that the gcd weight of $(D,\Lambda)$ is at least
\begin{align*}
1+(1-1/2)+(1-1/3)>2
\end{align*}
in case \ref{casea}, at least
\begin{align*}
1+1+(1-1/2)>2
\end{align*}
in case \ref{caseb}, and at least
\begin{align*}
1+1+1>2
\end{align*}
in case \ref{casec}.  Thus, in all cases, the gcd weight of $(D,\Lambda)$ is greater than $2$.  Since $D$ is a union of irreducible components of members of $\Lambda$, by Theorem \ref{pencildegen}\ref{pencildegenc}, $(\mathbb{P}^2\setminus D)(\O_{k,S})$ is contained in the union of finitely many effectively computable curves (in fact, the union of finitely many effectively computable members of $\Lambda$).
\end{proof}

\subsection{$\bar{\kappa}=0$}

We now prove Theorem \ref{kbar_0}, showing potential density of integral points when $\kbar(\mathbb{P}^2\setminus D)=0$.

\begin{proof}[Proof of Theorem \ref{kbar_0}]
By Theorem \ref{kojima_0}, $D$ is a nonsingular cubic curve or $\pp^2\setminus D$ contains an open subset isomorphic to $\mathbb G_m \times \mathbb G_m$.  If $D$ is a nonsingular cubic, then the result follows from Theorem \ref{thm:beukers}.  Otherwise, potential density of $(D,S)$-integral points follows from potential density of integral points on $\mathbb G_m\times \mathbb G_m$.
\end{proof}

\subsection{$\bar{\kappa}=1$}\label{sec:kbar_1}


\renewcommand{\theenumi}{\roman{enumi}}
\renewcommand{\labelenumi}{(\roman{enumi})}

In this section, we analyze the arithmetic of the surfaces that were classified in Section \ref{kbar1}.  As mentioned previously, this does not cover all the possible surfaces $V\subset\mathbb{P}^2$ of interest with $\kbar(V)=1$.  In contrast to previous sections, we are also only able to give a partial analysis of the arithmetic of the surfaces in Section \ref{kbar1}.  We give several examples in Section \ref{sec:examples} which go beyond the results of this section.

The general strategy of our analysis is as follows.  Using the explicit equations of Section \ref{kbar1},  we construct a corresponding pencil $\Lambda$ of curves on $\pp^2$.  Most of the time, the Campana weight of $(D,\Lambda)$ is greater than $2$, and so Theorem \ref{pencildegen} gives us Zariski-non-density of integral points. For the rest, we attempt to explicitly construct a Zariski-dense set of integral points.  Since there is always a $\mathbb{G}_m$-fibration to $\pp^1$ for such surfaces, this amounts to constructing a horizontal curve $C_0$ as in Lemma \ref{lem:twopts}.

\subsubsection{Complement of a rational bicuspidal curve}

We begin by analyzing integral points on the complement of a rational bicuspidal curve.

\begin{theorem}\label{thm:bicuspidal}
Suppose that $D$ is a rational curve defined over $\overline{\qq}$ having exactly two cusps and no other singularities and that $V=\mathbb{P}^2\setminus D$ satisfies $\bar{\kappa}(V)=1$.
\begin{enumerate}
\item   Assume the $abc$ conjecture.  If integral points on $V$ are potentially dense then $D$ is projectively equivalent to one of the following:
\begin{align*}
&\left[X^{2n+1} Y + \sum_{j=1}^{n+1} b_j X^{2(n+1-j)} (X Z + c Y^2)^j\right]^2 + X^2 (XZ + c Y^2)^{2n+1}= 0\\
&Y^{\alpha+1} + X ( Z+ a Y)^\alpha =0\\
&X Y^{(n+1)\alpha -1} + \left(X^n Z +  \sum_{j=1}^{n+1} a_j X^{n+1-j} Y^j\right)^\alpha = 0\\
&Y^{(n+1)\alpha +1} + X \left(X^n Z +  \sum_{j=1}^{n+1} a_j X^{n+1-j} Y^j\right)^\alpha = 0
\end{align*}
where $n\ge 1$, $\alpha \ge 2$, $a, a_j, b_j\in \overline \qq$, and $a_{n+1}, b_{n+1}, c\in(\overline \qq)^*$.\label{bicuspidali}
\item Suppose that $D$ is defined by one of the following:
\begin{align*}
& Y^{\alpha+1} + X ( Z+ a Y)^\alpha =0\\
& X Y^{(n+1)\alpha -1} + \left(X^n Z +  a_1 X^{n} Y + a_{n+1} Y^{n+1}\right)^\alpha = 0\\
& Y^{(n+1)\alpha +1} + X \left(X^n Z +  a_1 X^{n} Y + a_{n+1} Y^{n+1}\right)^\alpha = 0
\end{align*}
where $n\ge 1$, $\alpha \ge 2$, $a, a_1\in \overline\qq$, and $a_{n+1}\in (\overline \qq)^*$.  Then integral points on $V$ are potentially dense.\label{bicuspidalii}
\end{enumerate}
\end{theorem}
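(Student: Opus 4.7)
My plan for part (i) is to combine Tono's classification (Theorem \ref{tonobicusp}) with the Campana weight criterion (Theorem \ref{pencildegen}(a)). By Tono, $D$ is projectively equivalent to $\tau^{-1}(D_s)$, where $D_s:F_1+F_2=0$ is a terminal bicuspidal curve in one of the forms \eqref{eq:bicusp1}--\eqref{eq:bicusp3} and $\tau=\tau_{\vec{a_s}}\circ\cdots\circ\tau_{\vec{a_1}}$ is a composition of De Jonqui\`eres transformations. The tautological pencil $\Lambda_s=\{\mu F_1+\nu F_2\}$ pulls back under $\tau$ to a pencil $\Lambda$ on $\mathbb{P}^2$ containing $D$ as a member, and its base points (preimages of the cusps of $D_s$) lie on $D$, so Theorem \ref{pencildegen}(a) applies. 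The Campana weight $w$ of $(D,\Lambda)$ is a sum of a contribution $1$ from $D$ itself (its unique component being excluded) and contributions from the two special members $\tau^*F_1=0$ and $\tau^*F_2=0$, since all other members are generically irreducible and reduced. Whenever $w>2$, Theorem \ref{pencildegen}(a) and the $abc$ conjecture confine integral points to finitely many pencil members, precluding potential density.

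For $s=0$, direct computation of $w$ in each of the three terminal forms yields $w\le 2$ precisely when $\alpha_1-\alpha_0=1$ in \eqref{eq:bicusp1}, $(n+1)\alpha_0-\alpha_1=1$ in \eqref{eq:bicusp2}, or $\alpha_1-(n+1)\alpha_0=1$ in \eqref{eq:bicusp3}, which recover exactly forms (2), (3), (4). The hypotheses $\gcd(\alpha_0,\alpha_1)=1$ and $\alpha_0\ge 2$ rule out near-miss configurations such as $(\alpha_0,(n+1)\alpha_0-\alpha_1)=(2,2)$ in case \eqref{eq:bicusp2}.

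For $s\ge 1$, I compute the pullback pencil explicitly. A single $\tau_{\vec a}$ with parameter $m$ multiplies the $x$-exponent of $\tau^*F_2$ by $m+1$ and replaces $J_{\vec b}$ in $\tau^*F_1$ by $\tau^*J_{\vec b}$, generically still irreducible. A case analysis, again using $\gcd(\alpha_0,\alpha_1)=1$ and $\alpha_0\ge 2$, shows $w>2$ in every case except when the terminal data is \eqref{eq:bicusp2} with $\alpha_0=2$, $\alpha_1=2n+1$ and exactly one De Jonqui\`eres with $m=1$, $\vec{a}=(0,c)$, $c\ne 0$, is applied; the explicit pullback is then precisely form (1). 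Iterating further De Jonqui\`eres multiplies $x$-exponents by additional factors $\ge 2$, pushing $w$ above $2$, so no further forms survive.

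For part (ii), I apply Lemma \ref{lem:twopts} to the same pencil $\Lambda$. By the structure theorem for $\bar\kappa=1$ surfaces, the generic member $C\in\Lambda$ satisfies $C\setminus D\cong\mathbb{G}_m$ (or $\mathbb{A}^1$). As a horizontal curve $C_0$ I take a suitably chosen line: for form (2), $C_0=\{Z+bY=0\}$ with $b\ne a$; for forms (3) and (4), $C_0=\{Z+a_1Y=0\}$. Using the restricted shape of the defining polynomial (with $a_2=\cdots=a_n=0$), direct substitution shows $D|_{C_0}$ factors as $Y^k$ or $Y^k(X+\lambda Y)$ for some $\lambda\in\overline{\qq}^*$, so $C_0\setminus D\cong\mathbb{A}^1$ or $\mathbb{G}_m$; and $C_0$, being a line of degree $1$, is easily checked not to be a component of any pencil member (in each case the relevant restriction is not identically zero). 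Lemma \ref{lem:twopts} then gives potential density. The main obstacle is the $s\ge 1$ analysis in part (i): one must verify that no exotic factorization of $\tau^*J_{\vec b}$ or $\tau^*(Z+aY)$, and no unanticipated cancellation between $\tau^*F_1$ and $\tau^*F_2$, can bring $w$ back down to $\le 2$ outside the single configuration producing form (1).
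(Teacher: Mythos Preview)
Your approach matches the paper's proof essentially step for step: Tono's classification supplies the terminal data and the De Jonqui\`eres chain, the pencil $\Lambda$ is the pullback of $\{\mu F_1+\nu F_2\}$, and the Campana weight computation (using $\gcd(\alpha_0,\alpha_1)=1$, $\alpha_0\ge 2$, and the fact that $\tau$ raises $x$-exponents by a factor $\ge 2$) isolates exactly the four surviving forms; for part (ii) the paper uses the same horizontal lines $Z=bY$ (with $b\ne -a$) and $Z=-a_1Y$ in Lemma~\ref{lem:twopts}. One small correction: in the $s=1$ exceptional case the Campana weight is already $\le 2$ for \emph{any} $\vec a=(a_1,a_2)$ with $a_2\ne 0$, not just $\vec a=(0,c)$; the paper then absorbs $a_1$ by the substitution $Z+a_1Y\mapsto Z$ to reach form~(1), so your conclusion is right but the condition ``$\vec a=(0,c)$'' should be stated as a normalization up to projective equivalence rather than a constraint coming from the weight.
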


\begin{proof}
We first prove part \eqref{bicuspidali}.  Using the notation of Theorem \ref{tonobicusp}, let $\tau = \tau_{\vec {a_s}} \circ \cdots \circ \tau_{\vec{a_1}}$ when $s\ge 1$ and $\tau = \id$ when $s=0$.  Let $\Lambda$ be the pencil generated by $F_1(\tau(x,y,z))$ and $F_2(\tau(x,y,z))$, where $F_i$ are chosen to be one of the three possibilities stated in Theorem \ref{tonobicusp} (ii).  By construction, the base points of $\Lambda$ are on $D$.  Suppose at first that $s\ge 1$, i.e. some De Jonqui\'eres transformation is necessary.  Since the $x$-coordinate of $\tau(x,y,z)$ is a pure power of $x$ with power at least $2$ and since $\alpha_1 > \alpha_0 > 1$, straightforward consideration of cases gives that the Campana weight of $(D,\Lambda)$ is at least $1 + (1-\frac 12) + (1 - \frac 13) >2$, unless $(s,\alpha_0,\alpha_1,n,m_1) = (1,2,2n+1,n,1)$ and $D_1$ is defined by \eqref{eq:bicusp2}.  So Theorem \ref{pencildegen} shows Zariski-non-density of integral points except for this case.  In this special case, the Campana weight is exactly $2$.  Let $\tau = (x^2, x z + a_1 x y + a_2 y^2, x y)$ and let $D_1$ be defined by $(x^n z + \sum_{j=1}^{n+1} b_j x^{n+1-j} y^j)^2 + x y^{2n+1}$, where $a_2, b_{n+1} \in (\overline \qq)^*$.  Then $D$ is defined by
\[
\left[x^{2n+1} y + \sum_{j=1}^{n+1} b_j x^{2(n+1-j)} (x z + a_1 x y + a_2 y^2)^j\right]^2 + x^2 (x z + a_1 x y + a_2 y^2)^{2n+1} = 0.
\]
By replacing $z+ a_1 y$ by $z$, the equation simplifies to
\[
\left[x^{2n+1} y + \sum_{j=1}^{n+1} b_j x^{2(n+1-j)} (x z + a_2 y^2)^j\right]^2 + x^2 (x z + a_2 y^2)^{2n+1},
\]
yielding the first equation in \eqref{bicuspidali}.

We are now left with the case $s =  0$. For \eqref{eq:bicusp1} and \eqref{eq:bicusp3}, when the power of $x$ in $F_2$ is at least $2$, as $\alpha_1 \ge 3$, the Campana weight is greater than 2 and we may apply Theorem \ref{pencildegen}.  For \eqref{eq:bicusp2}, when $\alpha_0 = 2$, the gcd condition forces the power of $x$ in $F_2$ to be odd, so the minimum power in $F_2$ cannot be $2$.  Therefore, in the case \eqref{eq:bicusp2}, the Campana weight is greater than $2$ unless, again, the power of $x$ in $F_2$ is exactly equal to $1$.  Thus, the cases where the exponent of $x$ in $F_2$ is equal to $1$ yield the other three possibilities in \eqref{bicuspidali}. Note that in each of these cases the Campana weight of $(D,\Lambda)$ is less than $2$.

We now prove \eqref{bicuspidalii}.  For the first case, we can use Lemma \ref{lem:twopts} with the line defined by $z = b y$ for some $b\neq -a$.  For the last two cases, we can use Lemma \ref{lem:twopts} with the line defined by $z = - a_1 y$.
\end{proof}

\begin{remark}
Note that we need to use the Campana weight, rather than the gcd weight, to eliminate many cases in the proof of Theorem \ref{thm:bicuspidal} \eqref{bicuspidali}.  Since the member of $\Lambda$ corresponding to $F_2$ in \eqref{eq:bicusp1}--\eqref{eq:bicusp3} has at least two components, the gcd multiplicity of this member may be significantly smaller than the Campana multiplicity, resulting in many cases where the the Campana weight is greater than two, but the gcd weight is not.
\end{remark}

\subsubsection{Complement of a rational unicuspidal curve}

In contrast to the previous case, we prove an unconditional result for rational unicuspidal curves.

\begin{theorem}\label{thm:unicuspidal}
Suppose that $D$ is a rational curve defined over $\overline{\qq}$ having exactly one cusp and no other singularities and that $V=\mathbb{P}^2\setminus D$ satisfies $\bar{\kappa}(V)=1$.  If integral points on $V$ are potentially dense then $D$ is projectively equivalent to
 \[
 \frac{((X^2 Z + Y^3) Y + a X^4)^3 \,\,\,-\,\,\, (X^2 Z + Y^3)^4}{X^2}=0
 \]
for some $a\in \overline{\qq}^*$.
\end{theorem}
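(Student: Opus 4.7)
The plan is to closely parallel the proof of Theorem~\ref{thm:bicuspidal}\eqref{bicuspidali}: invoke Tono's classification (Theorem~\ref{Tonounicuspidal}) to reduce $D$ to one of three explicit families, attach to each a natural pencil $\Lambda$, and verify that the gcd-weight of $(D,\Lambda)$ exceeds $2$ in every case save the one displayed in the statement. Because each of the relevant pencil members will carry at most one component off $D$, the gcd-weight here coincides with the Campana weight, so the conclusion becomes unconditional via the gcd-weight part of Theorem~\ref{pencildegen}.

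In every one of Tono's cases (i)--(iii), the defining polynomial of $D$ has the shape $(F_1^{\mu_A} - F_2^{\mu_G})/E^n$, where $E$ is an auxiliary irreducible polynomial ($E = x$ in case (i), $E = g = xz - y^2$ in cases (ii) and (iii)) and $F_1, F_2$ are the explicit polynomials displayed in Theorem~\ref{Tonounicuspidal}. I would let $\Lambda$ be the pencil generated by $F_1^{\mu_A}$ and $F_2^{\mu_G}$; the $[1{:}{-}1]$ member is then the divisor $D + n\{E=0\}$, while the two generators are multiple members of multiplicities $\mu_A$ and $\mu_G$. A short direct check---computing $F_1$ and $F_2$ modulo $E$ and observing that they reduce to pure powers of a single remaining coordinate with equal exponents---confirms that the only base point of $\Lambda$ is the cusp of $D$. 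Moreover $\deg D$ strictly exceeds $\deg F_1$, $\deg F_2$, and $\deg E$, so since $D$ is irreducible none of these auxiliary polynomials can share a component with $D$; consequently, each of the three identified pencil members contributes at least $1 - 1/\mu_A$, $1 - 1/\mu_G$, or $1 - 1/n$ to the gcd-weight of $(D,\Lambda)$, regardless of how $F_1$ or $F_2$ may themselves factor.

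The remainder is numerical. The gcd-weight is thus at least $3 - 1/\mu_A - 1/\mu_G - 1/n$, and reading the parameters off Tono's classification yields
\begin{align*}
\text{(i)} &: (\mu_A, \mu_G, n) = (n+1,\; (n+1)(s-1)+1,\; n), \quad n,s \ge 2, \\
\text{(ii)} &: (\mu_A, \mu_G, n) = (4n+1,\; 2n+1,\; n), \quad n \ge 2, \\
\text{(iii)} &: (\mu_A, \mu_G, n) = (4n+1,\; 2((4n+1)s - n),\; n), \quad n \ge 2,\, s \ge 1.
\end{align*}
An elementary estimation shows $1/\mu_A + 1/\mu_G + 1/n < 1$ in every instance except Tono (i) with $(n,s) = (2,2)$, where the sum equals $1/2 + 1/3 + 1/4 = 13/12$. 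The unconditional gcd-weight statement of Theorem~\ref{pencildegen} then forces the $S$-integral points of $V$ to lie in finitely many members of $\Lambda$ in every remaining case, which precludes potential density.

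The surviving exceptional case reproduces exactly the curve in the statement: with $n = s = 2$ in Tono (i), one has $f = x^2 z + y^3$, $F_1 = fy + ax^4$ (with $a = a_2 \ne 0$), $\mu_A = 3$, and $\mu_G = 4$, so $D$ is defined by $((x^2 z + y^3)y + ax^4)^3 - (x^2 z + y^3)^4 = x^2 \cdot D_{\mathrm{poly}}$, matching the displayed equation after dividing out $x^2$. The main delicacy is the small-parameter case analysis inside Tono (i): the reciprocal sum $1/n + 1/(n+1) + 1/\mu_G$ hovers near $1$ for small $n$ and $s$, and one must verify by hand that $(n,s) = (2,2)$ is the unique pair for which $1/\mu_G \ge 1 - 1/n - 1/(n+1)$. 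All the other inequalities reduce to monotone estimates in the parameters $n$ and $s$.
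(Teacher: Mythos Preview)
Your proposal is correct and follows essentially the same line as the paper's proof: invoke Tono's classification, form the pencil generated by $F_1^{\mu_A}$ and $F_2^{\mu_G}$, observe that the $[1{:}{-1}]$ member is $D$ together with $n$ copies of $E$, and then check numerically that $1/\mu_A + 1/\mu_G + 1/n < 1$ in all cases except $(n,s)=(2,2)$ of Tono~(i), whence the gcd-weight criterion of Theorem~\ref{pencildegen}(b) applies unconditionally. One small caveat: your opening sentence asserting that ``each of the relevant pencil members will carry at most one component off $D$'' (so that gcd-weight equals Campana weight) is not obviously true for $F_1$ in general and is never verified, but this claim is purely motivational---your subsequent argument explicitly handles arbitrary factorizations of $F_1,F_2$ and uses only that every multiplicity in $F_i^{\mu_i}$ is divisible by $\mu_i$, which is all that is needed.
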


\begin{proof}
We use Theorem \ref{Tonounicuspidal}.  We begin with the first case of that theorem.  Let $D$ be defined by
\begin{align*}
\left(\left(f^{s-1}y+\sum_{i=2}^sa_if^{s-i}x^{(n+1)i-n}\right)^{\mu_A}-f^{\mu_G}\right)/x^n=0,
\end{align*}
where $f=x^nz+y^{n+1}$, $\mu_A=n+1$, $\mu_G=(n+1)(s-1)+1$, and $n, s\geq 2$.  Let $\Lambda$ be the pencil generated by $\left(f^{s-1}y+\sum_{i=2}^sa_if^{s-i}x^{(n+1)i-n}\right)^{\mu_A}$ and $f^{\mu_G}$.  Since the divisor $D$ occurs in the same member of $\Lambda$ as $x^n=0$, this member contributes a gcd multiplicity of $n$.  So the gcd weight of $(D,\Lambda)$ is at least $(1-\frac 1{\mu_A}) + (1-\frac 1{\mu_G}) + (1-\frac 1n)$.  When $n\ge 3$, we have $\mu_A \ge 4$ and $\mu_G \ge 5$, so the gcd weight of $(D,\Lambda)$ is at least $\frac 34 + \frac 45 + \frac 23 > 2$.  In addition, if $n=2$ and $s\ge 3$, then $\mu_A = 3$ and $\mu_G \ge 7$, so the gcd weight of $(D,\Lambda)$ is at least $\frac 23 + \frac 67 + \frac 12 = \frac{85}{42}>2$.  Therefore, the only situation where $(D,\Lambda)$ possibly has gcd weight $\le 2$ is when $n=s=2$.  In this situation, the divisor is projectively equivalent to
\[
\frac{\left((x^2 z + y^3) y + a x^4\right)^3 - (x^2 z + y^3)^4}{x^2} = 0
\]
for some $a\in (\overline \qq)^*$.  Since every member other than the two multiple fibers and the reducible fiber containing $D$ is irreducible and reduced \cite[Theorem 1]{tono2}, the gcd-weight (as well as the Campana weight) of this $(D,\Lambda)$ is $\frac 12 + \frac 23 + \frac 34 = \frac{23}{12} < 2$.

We now work with the second and third cases of Theorem \ref{Tonounicuspidal}.  For the second case, let $D$ be defined by
\begin{align*}
((g^ny+x^{2n+1})^{\mu_A}-(g^{2n}z+2x^{2n}yg^n+x^{4n+1})^{\mu_G})/g^n=0,
\end{align*}
where $g=xz-y^2$, $\mu_A = 4n+1 \ge 9$, and $\mu_G = 2n +1 \ge 5$.  Let $\Lambda$ be the pencil generated by $(g^ny+x^{2n+1})^{\mu_A}$ and $(g^{2n}z+2x^{2n}yg^n+x^{4n+1})^{\mu_G}$.  Since the divisor $D$ occurs in the same member of $\Lambda$ as $g^n = 0$, this member has gcd multiplicity $n$.  Therefore, the gcd weight of $(D,\Lambda)$ is at least $\frac 89 + \frac 45 + \frac 12 > 2$.  In the third case, $\mu_A = 4n+1 \ge 9$ and $\mu_G = 2((4n+1)s - n) \ge 2(3n+1) \ge 14$ as $s\ge 1$.  By the same argument (with the natural choice of $\Lambda$), the gcd weight of $(D,\Lambda)$ is at least $\frac 89 + \frac {13}{14} + \frac 12 > 2$.  Applying Theorem \ref{pencildegen} (b) finishes the proof.
\end{proof}


\subsubsection{Complement of a rational curve of degree $\leq 5$ with a unique singularity}

Under certain conditions on the singularities, we prove potential density of integral points on the complements of singular quartic and quintic curves.

\begin{theorem}\label{thm:mult3}
Suppose that $D$ is a quartic plane curve defined over $\overline \qq$ with 
a triple point, i.e. a singularity of multiplicity three.  Then integral points on $\mathbb{P}^2\setminus D$ are potentially dense.
\end{theorem}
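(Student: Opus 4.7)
The plan is to apply Lemma~\ref{lem:twopts}. Under the standing subsubsection hypothesis that $D$ is irreducible with a unique singularity and $\bar{\kappa}(\mathbb{P}^2 \setminus D) = 1$, Wakabayashi's Theorem~\ref{thm:waka} ensures $D$ is rational, and Yoshihara's Theorem~\ref{yoshihara} identifies the triple point $P$ as case $(e,N) = (3,3)$, i.e., having three branches. Since $D$ has arithmetic genus $3$ and the $\delta$-invariant of any multiplicity-three point is at least $\binom{3}{2}=3$, rationality forces $\delta = 3$; with three branches, this is possible only if the three tangent lines at $P$ are pairwise distinct (a tangent pair of branches would push $\delta$ above $3$). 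In other words, $P$ is an ordinary triple point.

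Next, take $\Lambda$ to be the pencil of lines through $P$. For a general $L \in \Lambda$, Bezout gives $L \cdot D = 4$ with local multiplicity $3$ at $P$, so $L \cap D = \{P, Q_L\}$ set-theoretically and $L \setminus D \cong \mathbb{G}_m$. For the auxiliary curve $C_0$ required by Lemma~\ref{lem:twopts}, take $\ell$ to be any bitangent, flex tangent, or hyperflex tangent of $D$; then $|\ell \cap D| \leq 2$, giving $\ell \setminus D \cong \mathbb{A}^1$ or $\mathbb{G}_m$. The key observation is that no such $\ell$ passes through $P$: if $\ell \ni P$ were tangent to $D$ at a smooth point $Q$ with local multiplicity at least $2$, Bezout would give $\ell \cdot D \geq 3 + 2 = 5 > 4$, a contradiction. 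Hence $\ell \notin \Lambda$ and gives a valid $C_0$.

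The existence of such an $\ell$ follows from Pl\"ucker's formulas: the class of $D$ is $d^* = d(d-1) - 2\delta = 12 - 6 = 6$, and biduality gives the relation $2\delta^* + 3\kappa^* = d^*(d^*-1) - d = 26$, where $\delta^*$ and $\kappa^*$ count the bitangents and flexes of $D$ respectively. In particular $\delta^* + \kappa^* \geq 9 > 0$, so at least one bitangent or flex of $D$ exists. Applying Lemma~\ref{lem:twopts} with this pencil $\Lambda$ and $C_0 = \ell$ then yields the claimed potential density.

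The main subtlety lies in the Pl\"ucker count: the formula $2\delta^* + 3\kappa^* = 26$ is the clean version when the dual curve $D^*$ has only ordinary nodes and cusps as singularities, whereas more degenerate dual singularities (reflecting, for instance, hyperflexes of $D$) require generalized formulas with appropriate multiplicities. Nevertheless, the essential fact that $D^*$ has at least one singular point—so that $D$ admits at least one bitangent, flex, or hyperflex—is robust, and the Bezout-based exclusion of special lines through $P$ is unconditional, so the approach goes through.
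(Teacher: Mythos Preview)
Your argument is correct and follows essentially the same route as the paper: both take $\Lambda$ to be the pencil of lines through the triple point and invoke Lemma~\ref{lem:twopts} with $C_0$ a bitangent of $D$. The only difference is that the paper obtains the bitangent by citing a result of Harris \cite[Th.~3.5]{Harris} (which gives four bitangents directly), whereas you supply a self-contained Pl\"ucker/duality argument and also allow a flex tangent to serve as $C_0$; your explicit B\'ezout check that such a line avoids $P$ is a detail the paper leaves implicit.
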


\begin{proof}
Let $\Lambda$ be the pencil of lines in $\mathbb{P}^2$ passing through the singular point of $D$.  Then for a general member $C\in \Lambda$, $C\setminus D\cong \mathbb{G}_m$ over $\bar{\qq}$.  By \cite[Th.~3.5]{Harris}, $D$ has four bitangent lines, whose eight points of contact lie on a conic.  Then taking $C_0$ to be one of the bitangents and applying Lemma \ref{lem:twopts} to $\Lambda$ and $C_0$, we conclude that integral points on $\mathbb{P}^2\setminus D$ are potentially dense.
\end{proof}

\begin{theorem}\label{thm:mult4} Suppose that $D$ is a quintic rational plane curve defined over $\overline \qq$ with a unique singular point, which has multiplicity $4$ with $3$ points above it in the normalization.  Then integral points on $\mathbb{P}^2\setminus D$ are potentially dense.
\end{theorem}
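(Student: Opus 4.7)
The plan is to apply Lemma \ref{lem:twopts}. Let $P$ denote the unique singular point of $D$, which is necessarily defined over $\overline\qq$. I would take $\Lambda$ to be the pencil of lines in $\mathbb{P}^2$ through $P$: because $P$ has multiplicity $4$ on the quintic $D$, a generic line $L\in\Lambda$ meets $D$ at $P$ with intersection multiplicity $4$ and at exactly one further smooth point, so $L\setminus D \cong \mathbb{G}_m$ over $\overline\qq$. The remaining task is to exhibit a plane curve $C_0$, not a member of $\Lambda$, with $C_0\setminus D \cong \mathbb{A}^1$ or $\mathbb{G}_m$.

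Next I would analyze the local structure of $D$ at $P$. The three branches $B_1, B_2, B_3$ have multiplicities $(1,1,2)$, and since $D$ is a rational quintic the genus formula forces $\delta_P = \binom{5-1}{2} - 0 = 6$. Combined with the expression $\delta_P = \sum_i \delta(B_i) + \sum_{i<j} I_P(B_i,B_j)$ and the lower bounds $I_P(B_i,B_j) \geq m(B_i)\, m(B_j)$, this pins down the singularity: the three tangent directions at $P$ are pairwise distinct, and $B_3$ is an ordinary cusp (so $\delta(B_3) = 1$). After a $PGL_3(\overline\qq)$ change of coordinates I may assume $P = [0:0:1]$ and that the cuspidal tangent of $B_3$ is $y = 0$, so $B_3$ admits a Puiseux parametrization $s \mapsto (s^2,\, s^3 + \psi_1 s^5 + \psi_2 s^7 + \cdots)$ with $\psi_j \in \overline\qq$.

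I would then take $C_0$ to be the irreducible cuspidal cubic with affine equation $y^2(1 - 2\psi_1 x) = x^3$. It has an ordinary cusp at $P$ with cuspidal tangent $y=0$, so it is not a line and hence not a member of $\Lambda$. The key calculation is to substitute the Puiseux parametrization of $B_3$ into the defining polynomial of $C_0$: the term $-2\psi_1 xy^2$ is chosen precisely to cancel the $s^8$-contribution in $y^2 - x^3$, and since $x=s^2$ is even in $s$ and the $y$-series is odd, the intermediate $s^7$ and $s^9$ coefficients vanish automatically. This yields $I_P(C_0, B_3) \geq 10$. The simple branches $B_1, B_2$, having tangent directions different from $y=0$, meet $C_0$ transversely at $P$ with $I_P(C_0, B_i) = m(C_0, P)\cdot m(B_i, P) = 2$. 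Hence $I_P(C_0, D) \geq 14$, and since $C_0\cdot D = 15$ by B\'ezout, the remaining intersection outside $P$ has total multiplicity at most $1$, forcing $C_0 \cap D$ to be either $\{P\}$ or $\{P, Q\}$ for a single transverse smooth point $Q$. The normalization $\mathbb{P}^1 \to C_0$ has a unique preimage above the cusp, so removing $C_0\cap D$ yields either $\mathbb{A}^1$ or $\mathbb{G}_m$, and Lemma \ref{lem:twopts} completes the proof.

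The main obstacle will be the explicit Puiseux expansion computation establishing $I_P(C_0, B_3) \geq 10$. The vanishing of the $s^6$ and $s^8$ coefficients is enforced by the form of $C_0$ (the $s^8$-cancellation being the whole point of the coefficient $-2\psi_1$), but one also needs to check that the $s^{10}$ coefficient, which works out to $2\psi_2 - 3\psi_1^2$, cannot vanish for any legitimate $D$: if it did, one would obtain $I_P(C_0, B_3) \geq 12$ and hence $I_P(C_0, D) \geq 16$, contradicting B\'ezout. Thus the bound $I_P(C_0, D) = 14$ (or occasionally $15$) is both achievable and sharp for any such $D$.
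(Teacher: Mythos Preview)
Your overall strategy and your branch analysis (multiplicities $(1,1,2)$, pairwise distinct tangents, $B_3$ an ordinary cusp, all forced by $\delta_P=6$) are correct. The gap is in the Puiseux step: merely placing $P=[0:0:1]$ with cuspidal tangent $y=0$ does \emph{not} make the $y$-series of $B_3$ odd in $s$. Writing the affine equation of $D$ as $F_5(x,y)+L_1L_2\,y^2=0$ with $L_i=a_ix+b_iy$ and substituting $x=s^2$, $y=s^3+a_4s^4+\cdots$ (after normalizing the leading coefficient), the $s^{11}$-relation gives
\[
a_4=-\frac{c_1+(a_1b_2+a_2b_1)}{2a_1a_2},
\]
where $c_1$ is the $x^4y$-coefficient of $F_5$; this is generically nonzero. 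With $a_4\neq 0$ the $s^7$-coefficient of $y^2(1-2\psi_1 x)-x^3$ is $2a_4$, so your bound collapses to $I_P(C_0,B_3)=7$ and $I_P(C_0,D)=11$, leaving up to four residual intersections. The repair is easy: either spend two more $PGL_3$ parameters (the shears $X\mapsto X+\beta Y$ and $Z\mapsto Z+eY$) to arrange $a_4=a_6=0$, which is precisely what is needed for the $s^7$ and $s^9$ terms to vanish; or take a general cuspidal cubic $y^2+\epsilon_1x^2y+\epsilon_2xy^2+\epsilon_3y^3-x^3=0$ and solve for $\epsilon_1,\epsilon_2,\epsilon_3$ to kill $s^7,s^8,s^9$. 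Either way $I_P(C_0,B_3)\ge 10$ and the argument concludes. Your final paragraph is unnecessary (and its B\'ezout contradiction relies on the same unjustified odd-series claim): you only need $I_P(C_0,D)\ge 14$, and a larger value only helps.

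The paper's construction of $C_0$ is different and more algebraic. Writing $D$ affinely as $F=F_5(x,y)+L_1L_2L_3^2=0$ and normalizing $L_1=x$, $L_2=y$, it picks $a,b$ so that $xy\mid F_5-(ax+by)^5$ and sets $G=(F_5-(ax+by)^5)/(xy)+L_3^2$. The identity $F=xy\cdot G+(ax+by)^5$ then shows that on the cuspidal cubic $C_0=\{G=0\}$ one has $F=(ax+by)^5$, so $D\cap C_0$ lies on the single line $ax+by=0$ through $P$; hence $|D\cap C_0|\le 2$ and Lemma~\ref{lem:twopts} applies. This bypasses all Puiseux and intersection-multiplicity bookkeeping, at the cost of relying on the explicit normal form of the equation.
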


\begin{proof}
We may assume that the singular point is $[0:0:1]$.  By the genus--multiplicity formula (cf. \cite[Example V.3.9.2]{hartsAG}), the singularity of $D$ is resolved after one blowup.  Since the preimage of the singular point in the normalization consists of $3$ points, the defining equation of $D$ is \[ F(X,Y,Z) = F_5(X,Y)  + L_1(X,Y) L_2(X,Y)
L_3(X,Y)^2 Z, \] where $F_5$ is a homogeneous polynomial of degree $5$ in two variables and $L_1, L_2, L_3$ are distinct linear polynomials.
Without loss of generality, we may assume that $L_1 = X$ and $L_2 = Y $.  Then for a suitable choice of $a$ and $b$ in $\overline{\qq}$, $F_5(X,Y) - (aX+ bY)^5$ is divisible by $XY$.  We let $G$ be the cubic polynomial \[ G(X,Y,Z) =
\frac{F(X,Y,Z) - (aX+bY)^5}{XY} = \frac{F_5(X,Y) - (aX+ bY)^5}{XY} + L_3
(X,Y)^2 Z. \] The curve $C$ defined by $G=0$ has a cusp, and $D \cap C = \{aX + bY = 0\} \cap C$, so it follows that $C\setminus D$ is isomorphic to $\mathbb{G}_m$.  We also note that a general line through $[0:0:1]$ meets $D$ in two points, and so we conclude from Lemma \ref {lem:twopts} that integral points on $\pp^2\setminus D$ are potentially dense.
\end{proof}

\subsubsection{Complement of a line and a curve}

Under the $abc$ conjecture, we partially classify integral points on the complement of a line and a plane curve when the complement has logarithmic Kodaira dimension one.

\begin{theorem}\label{thm:linecurve}
Suppose that $D$ is a union of a line and an irreducible curve defined over $\overline \qq$ and let $V=\mathbb{P}^2\setminus D$.  Assume the $abc$ conjecture.  If integral points on $V$ are potentially dense and $\kbar(\mathbb{P}^2\setminus D)=1$ then $V\cong \mathbb{P}^2\setminus D'$, where $D'$ is defined by one of the following two equations:
\begin{enumerate}
\item $Z( Z^{1 + b(l+1)}  + X (X^l Y + p(X,Z) Z^{l+1 - \deg p})^b) = 0$, where
$b>1$, $l>0$, $\deg p <l$, and $p(0,1)\neq 0$.
\item
$Z(a_0(X,Z) Y  + a_1(X,Z) Z^{1+\deg a_0 - \deg a_1}) = 0$,
where $a_0$ and $a_1$ do not have a common factor, $\deg a_1<\deg a_0$, and $a_0$ has at least two distinct factors.
\end{enumerate}
\end{theorem}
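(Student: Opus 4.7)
The plan is to combine Aoki's classification (Theorem \ref{Aoki}) with Theorem \ref{pencildegen}(i). After an affine change of coordinates on $\mathbb{A}^2 = \mathbb{P}^2\setminus L$, Theorem \ref{Aoki} lets me assume that the dehomogenized equation of $C$ is one of Aoki's shapes (i)--(iv). For each, I will exhibit a natural pencil $\Lambda$ on $\mathbb{P}^2$ whose base locus lies in $D = L\cup C$ and compute its Campana weight. When the weight exceeds $2$, Theorem \ref{pencildegen}(i) together with the $abc$ conjecture forces the $S$-integral points of $V$ into finitely many members of $\Lambda$, hence Zariski-non-dense; otherwise the family is precisely one of the two shapes in the conclusion.

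The pencils I use are (with $L = \{Z=0\}$ and $P(X,Z)$ denoting the homogenization of $p(x)$): in (i), $\Lambda$ generated by $X^aY^b$ and $Z^{a+b}$; in (ii), by $X^a(X^l Y + P\,Z^{l+1-\deg p})^b$ and $Z^{a+b(l+1)}$; in (iii), by $\tilde f$ and $Z^{1+\deg a_0}$; and in (iv) (with $a<b$ by symmetry), by $X^a Z^{b-a}$ and $Y^b$. A direct check verifies that the base points of $\Lambda$ lie on $D$ in each case, so Theorem \ref{pencildegen} applies.

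The core step is the Campana-weight count. In cases (i), (ii), and (iii), two distinguished members of $\Lambda$ are a pure power of $Z$ (entirely inside $D$, contribution $1$) and the member equal to $\tilde f$ (which is $C$, entirely inside $D$, contribution $1$, using irreducibility of $\tilde f$, which in (ii) follows from $p(0)\neq 0$ and in (iii) from $\gcd(a_0,a_1)=1$). The third multiple member is $X^aY^b$ in (i), $X^a(X^l Y + P\,Z^{l+1-\deg p})^b$ in (ii), and, in (iii), each degenerate affine fiber $a_0(x)y+a_1(x)=t$. In (i) the Campana multiplicity of the third member is $\min(a,b)\geq 2$; in (ii) it is $\min(a,b)$, which is $\geq 2$ exactly when $a\geq 2$; and in (iii) every degenerate fiber contains the vertical line $\{x=\alpha\}$ with multiplicity one (coming from $a_0(\alpha)=0$), so the Campana multiplicity is $1$ and each such fiber contributes $0$. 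Hence the Campana weight is at least $5/2$ in (i) and in (ii) with $a\geq 2$, but equals exactly $2$ in (ii) with $a=1$ and in (iii). In (iv), the three distinguished members contribute $1$ (for the $C$-member), $1-1/a$ (for $X^aZ^{b-a}$, after dropping the $\{Z=0\}$ component which lies in $D$), and $1-1/b$ (for $Y^b$); with $a,b>1$, $\gcd(a,b)=1$, and $a<b$, this totals at least $1 + 1/2 + 2/3 = 13/6 > 2$.

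Thus cases (i), (iv), and (ii) with $a\geq 2$ are excluded by Theorem \ref{pencildegen}(i), and what survives, after rewriting projectively, is exactly the two shapes $D'$ in the conclusion. The main obstacle is the careful bookkeeping of Campana multiplicities and the accompanying irreducibility checks—especially verifying that $X^l Y + P(X,Z)\,Z^{l+1-\deg p}$ is irreducible over $\overline{\mathbb{Q}}$ (using $p(0)\neq 0$) and that every degenerate fiber in (iii) contains a vertical line of multiplicity one—since without these the weight count could shift and one might wrongly exclude a case that in fact admits potentially dense integral points.
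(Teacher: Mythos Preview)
Your proof is correct and follows essentially the same route as the paper: reduce via Aoki's classification to the four affine shapes, attach to each the same natural pencil, and eliminate (i), (iv), and (ii) with $a\ge 2$ by showing the Campana weight exceeds $2$, leaving exactly the two projective shapes in the statement. Your extra bookkeeping---the explicit irreducibility check for $X^lY+P(X,Z)Z^{l+1-\deg p}$ and the verification that the Campana weight in case (iii) is exactly $2$---goes a bit beyond what the paper records (it simply passes case (iii) through untouched), but is consistent with it.
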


\begin{proof}
Let $L$ be the line defined by $Z=0$ and identify $\mathbb{A}^2=\mathbb{P}^2\setminus L$.  If $D$ is a sum of $L$ and a curve $C$ and $\phi:\mathbb{A}^2\to \mathbb{A}^2$ is an automorphism, then integral points on $\mathbb{P}^2\setminus D=\mathbb{A}^2\setminus C$ will be transformed via $\phi$ to integral points on $\mathbb{A}^2\setminus \phi(C\cap \mathbb{A}^2)$.   Thus, we may reduce to analyzing the four cases classified by Aoki in Theorem \ref{Aoki}. Note that since $Z=0$ is a part of the divisor, any multiplicity of $Z$ is ignored in computing the gcd/Campana weights.

Suppose first that $f(x,y)=x^a y^b + 1$, where $\mathrm{gcd}(a,b) = 1$ and $a>1$ and $b>1$.  Letting $\Lambda$ be the pencil defined by $X^a Y^b$ and $Z^{a+b}$, the Campana weight of $(D,\Lambda)$ is at least $\left(1-\frac 1{\min(a,b)}\right) + 1 + 1>2$.  So according to Theorem \ref{pencildegen} (assuming the $abc$ conjecture), integral points are always Zariski-non-dense.

Suppose now that $f(x,y)=x^a ( x^l y + p(x))^b + 1$, where $a>0$, $b>1$, $l>0$, $\mathrm{gcd}(a,b) = 1$, $\deg p(x) < l$ and $p(0) \neq 0$.  Letting $\Lambda$ be the pencil generated by $Z^{a+b(l+1)}$ and the homogenization of $x^a ( x^l y + p(x))^b$, the Campana weight of $(D,\Lambda)$ is at least $\left(1-\frac 1{\min(a,b)}\right) + 1 + 1$.  This is greater than $2$ if and only if $a>1$.  Then by Theorem \ref{pencildegen}, integral points are Zariski-non-dense unless $a=1$.  The case $a=1$ yields the first possibility in the theorem.

The second possibility in the theorem is the third case in Theorem \ref{Aoki}, to which we have nothing to add.

Finally, suppose that $f(x,y)=x^a - y^b$ with $a>1$, $b>1$, and $\mathrm{gcd}(a,b) = 1$.  Without loss of generality, let us assume that $a< b$, and let $\Lambda$ be the pencil generated by $X^a Z^{b-a}$ and $Y^b$.  Then the gcd/Campana weight of $(D,\Lambda)$ is at least
\begin{align*}
\left(1-\frac 1{a}\right) + \left(1-\frac 1b\right)+1\geq \frac{1}{2}+\frac{2}{3}+1>2,
\end{align*}
since $\gcd(a,b)=1$ and $a,b>1$. Then by Theorem \ref{pencildegen}, integral points are Zariski-non-dense (without assuming the $abc$ conjecture).
\end{proof}

\begin{remark}\label{rem:intnotdone}
Here we list the cases of $\kbar(\mathbb{P}^2\setminus D) = 1$ for which we have been unable to classify integral points (even under the $abc$-conjecture). As in Remark \ref{rem:strnotdone}, a part of the problem is the lack of a classification theory in certain cases.  As we will see in Lemma \ref{Vlemma}, integral points on $\pp^2\setminus D$ are never Zariski-dense if $D$ has at least four components.  Thus, the unresolved cases due to a lack of an appropriate classification are when $D$ is irreducible of degree at least $6$ having exactly one singularity which is not a cusp, or when $D$ is reducible and either the number of components is exactly $3$ or none of the components is a line.

For the rest of the unresolved cases, we have been unable to construct a Zariski-dense set of integral points (or prove non-density) even though at least some classification result is known:
\begin{enumerate}
\item the bicuspidal case which is projectively equivalent to one of
\begin{align*}
&\left[X^{2n+1} Y + \sum_{j=1}^{n+1} b_j X^{2(n+1-j)} (X Z + c Y^2)^j\right]^2 + X^2 (XZ + c Y^2)^{2n+1}= 0,\\
& X Y^{(n+1)\alpha -1} + \left(X^n Z +  \sum_{j=1}^{n+1} a_j X^{n+1-j} Y^j\right)^\alpha = 0, \\
& Y^{(n+1)\alpha +1} + X \left(X^n Z +  \sum_{j=1}^{n+1} a_j X^{n+1-j} Y^j\right)^\alpha = 0,
\end{align*}
where $n\ge 1$, $\alpha \ge 2$, $a_j, b_j\in \overline\qq$, at least one of $a_2, \ldots, a_{n}$ is nonzero, and $a_{n+1}. b_{n+1}, c\in (\overline \qq)^*$.
\item the unicuspidal case which is projectively equivalent to
\[
\frac{((X^2 Z + Y^3) Y + a X^4)^3 \,\,\,-\,\,\, (X^2 Z + Y^3)^4}{X^2}=0
\]
for some $a\in \overline{\qq}^*$.
\item  a rational plane curve of degree $5$ with a unique singularity which has multiplicity $e$ and $e$ points above it in the normalization, $e=3,4$.
\item  a line and a geometrically irreducible curve which is projectively equivalent to either
\begin{align*}
& Z( Z^{1 + b(l+1)} + X (X^l Y + p(X,Z) Z^{l+1 - \deg p})^b) = 0, \\
& Z(a_0(X,Z) Y + a_1(X,Z) Z^{1+\deg a_0 - \deg a_1}) = 0,
\end{align*}
where $b>1$, $l>0$, $\deg p <l$, and $p(0,1)\neq 0$ for the first type, and where $a_0$ and $a_1$ do not have a common factor, $\deg a_1<\deg a_0$, and $a_0$ has at least two distinct factors for the second type.
\end{enumerate}
The examples in Section \ref{sec:examples} will show that there are some special examples of the cases listed above for which we can still conclude potential density of integral points.
\end{remark}

\section{Integral points in orbits}
To prove Theorem \ref{thm:orbit}, we will actually prove the following refinement, which describes in more detail the situation when integral points in an orbit are Zariski-dense in $\pp^2$:

\renewcommand{\theenumi}{\alph{enumi}}
\renewcommand{\labelenumi}{(\alph{enumi})}

\begin{theorem}
\label{mtheorem}
Let $\phi:\mathbb{P}^2\to \mathbb{P}^2$ be a morphism of degree $d>1$ defined over a number field $k$.  Let $D$ be a nontrivial effective divisor on $\mathbb{P}^2$ defined over $k$.  Let $S$ be a finite set of places of $k$ containing the archimedean places.  Let
\begin{align*}
r=\max\{\#\text{ of distinct irreducible components of }\Supp (\phi^n)^*D \text{ over }\bar{k} \mid n\in\mathbb{N}\}.
\end{align*}
\begin{itemize}
\item[(i)] Suppose that there exists $P\in\mathbb{P}^2(k)$ such that $\O_{\phi}(P)$ contains a Zariski dense set of $(D,S)$-integral points on $\mathbb{P}^2$.   Then $r\leq 3$.
\item[(ii)]
In the situation of (i), fix $n$ such that $\Supp (\phi^n)^*D$ has $r$ irreducible components over $\bar{k}$, and let $C$ be an irreducible component of $\Supp (\phi^n)^*D$ over $\bar{k}$.
Then for $i\in\mathbb{N}$, $C_i=\phi^{-i}(C)$ is a geometrically irreducible curve, and assuming Lang--Vojta's conjecture (Conjecture \ref{VC}) for $\mathbb{P}^2$, one of the following two conditions is satisfied:
\begin{enumerate}
\item  $C$ is a line and $\phi^{-m}(C)=C$ (as a set) for some positive integer $m$. \label{mtheoremC}
\item  each $C_i$ is a rational curve, and there exist a positive integer $i_0$ and a sequence of points $P_{i_0},P_{i_0+1},P_{i_0+2}, \ldots$ such that for all $i\ge i_0$, $\deg C_{i+1} = d \deg C_i$, $P_i$ is a singular point of $C_i$, and $\phi^{-1}(P_i)=\{P_{i+1}\}$ (as a set).  Further, the set $\{P_i : i\ge i_0\}$ is a finite set. \label{mtheoremP}
\end{enumerate}


\end{itemize}
\end{theorem}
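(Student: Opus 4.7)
The starting point is the functoriality of local heights: for all $j \geq m$ and all $v \in M_k$, $\lambda_v(D,\phi^j(P)) = \lambda_v((\phi^m)^*D, \phi^{j-m}(P)) + O(1)$. This identity shows that if $\O_\phi(P)$ contains a Zariski-dense set of $(D,S)$-integral points on $\pp^2$, then the tail $\{\phi^j(P) : j \geq m\}$ is a Zariski-dense set of $((\phi^m)^*D, S_m)$-integral points for some finite $S_m \supset S$. Applying Lemma \ref{Vlemma} to $\pp^2 \setminus (\phi^m)^*D$ bounds the number of geometric components of $((\phi^m)^*D)_{\mathrm{red}}$ by three, so $r \leq 3$, establishing (i).

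For (ii), fix $n$ with $(\phi^n)^*D$ attaining the maximum component count $r$. Since $(\phi^{n+i})^*D = (\phi^i)^*(\phi^n)^*D$, and pre-images of distinct irreducible divisors under $\phi^i$ are set-theoretically disjoint outside a finite set (using that $\phi$ is a finite surjective morphism), pulling back by $\phi^i$ cannot decrease the geometric component count; the bound $r$ therefore forces each component of $(\phi^n)^*D$ to remain geometrically irreducible under $(\phi^i)^*$. Thus $C_i = \phi^{-i}(C)_{\mathrm{red}}$ is geometrically irreducible, and $\phi^*C_i = m_i C_{i+1}$ for some multiplicity $m_i \in \mathbb{Z}_{\geq 1}$ satisfying $m_i \deg C_{i+1} = d \deg C_i$.

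Next I dichotomize on the sequence $\{C_i\}_{i \geq 0}$. If $C_j = C_k$ as sets for some $0 \leq j < k$, a short set-theoretic computation yields $\phi^{-(k-j)}(C) = C$, and the cyclic union $E = \bigcup_{i=0}^{(k-j)-1} C_i$ is completely invariant, $\phi(E) = \phi^{-1}(E) = E$. Classical results on invariant algebraic curves under endomorphisms of $\pp^2$ of degree $>1$ (in the spirit of Fornaess--Sibony and Dinh--Sibony) assert that any completely invariant algebraic curve is a union of lines; hence $C$ is a line, giving case (a) with $m = k-j$.

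Assume instead that all $C_i$ are distinct; I claim we are in case (b). Because $\pp^2 \setminus C_i$ has Zariski-dense integral points inherited from $\pp^2 \setminus (\phi^{n+i})^*D$, Lang--Vojta forces $\kbar(\pp^2 \setminus C_i) \leq 1$, so Wakabayashi (Theorem \ref{thm:waka}) makes each $C_i$ either a nonsingular cubic or a rational curve with at most two cusps. The smooth-cubic option cannot persist: $\deg (\phi^{n+i})^*D = d^i \deg D_n \to \infty$ while there are at most $r \leq 3$ components, so some component has unbounded degree and is not a cubic; rationality then propagates along the dominant maps $\phi\colon C_{i+1} \to C_i$, so each $C_i$ is rational for $i \geq i_0$. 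A degree comparison under the growth constraint similarly forces $m_i = 1$ for $i \geq i_0$, giving $\deg C_{i+1} = d \deg C_i$. The distinguished singular points $P_i$ are then extracted from the ramification of $\phi\colon C_{i+1} \to C_i$: a point of $C_i$ whose $\phi$-fiber on $C_{i+1}$ consists of a single highly ramified point $P_{i+1}$ must be singular on $C_i$, producing the desired sequence. The hard part, and main obstacle, is verifying the finiteness of $\{P_i : i \geq i_0\}$; this reduces to showing that the locus of points of $\pp^2$ admitting a single $\phi$-pre-image on the relevant $C_{i+1}$ is a proper Zariski-closed subset, whose iterated $\phi^{-1}$-orbit within itself must stabilize.
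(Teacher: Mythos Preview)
Your argument for part (i) is essentially the paper's, and your dichotomy into ``some $C_j=C_k$'' versus ``all $C_i$ distinct'' is a valid reorganization of the paper's case split (the paper instead asks whether $\deg C_{i+1}=d\deg C_i$ fails infinitely often; both routes arrive at a completely invariant curve and invoke Forn\ae ss--Sibony/Cerveau--Lins Neto). Your justification that $m_i=1$ eventually is stated as a ``degree comparison,'' but the actual reason is that $m_i>1$ forces $C_{i+1}$ to lie in the ramification divisor of $\phi$, which has only finitely many components; with the $C_i$ distinct this can only happen finitely often.

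There are two genuine gaps in case (b). First, the construction of the singular points $P_i$ is backwards. You assert that a point of $C_i$ whose $\phi$-fiber on $C_{i+1}$ is a single ramified point ``must be singular on $C_i$,'' but this is false in general (total ramification over a smooth point is common). The correct mechanism is the chain rule (Lemma \ref{singlemma}): if $P_i$ is singular on $C_i$, then \emph{every} point of $\phi^{-1}(P_i)$ is singular on $C_{i+1}$. Since each $C_i$ is a rational plane curve of degree eventually exceeding $3$, Wakabayashi guarantees $1$ or $2$ singular points. Choosing $i_0$ to maximize this number and picking any singular $P_{i_0}$, the preimage $\phi^{-1}(P_{i_0})$ consists entirely of singular points of $C_{i_0+1}$; the maximality of the singular count at $i_0$ then forces $\phi^{-1}(P_{i_0})$ to be a single point $P_{i_0+1}$, and one iterates. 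You have not supplied this pigeonhole step.

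Second, you correctly identify the finiteness of $\{P_i:i\ge i_0\}$ as the hard point, but your proposed reduction (``proper Zariski-closed locus whose iterated $\phi^{-1}$-orbit stabilizes'') is not a proof. The paper closes this gap by citing a theorem of Amerik--Campana: for an endomorphism of $\mathbb{P}^2$ of degree $>1$, there are at most nine points whose $\phi$-preimage is a singleton. Since each $P_i$ has this property, the set $\{P_i\}$ is finite. Without this external input (or an equivalent), the argument is incomplete.
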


We note that as an immediate corollary, if $r$ as above is at least $4$, a set of $(D,S)$-integral points in any orbit is Zariski-non-dense in $\pp^2$.  If case \eqref{mtheoremC} of (ii) holds for every irreducible component of $\Supp (\phi^n)^*D$, then after replacing $k$ by a finite extension, $D$ consists of $r$ distinct lines $L_1,\ldots, L_r$, $1\leq r\leq 3$, over $k$, and there is an integer $m$ such that $\phi^{-m}(L_i)=L_i$ for $i=1,\ldots, r$.  In fact, since $\cup_{i=1}^r\cup_{j=1}^m \phi^{-j}(L_i)$ is a completely invariant set, it is clear that we can take $m\in \{1,2,3\}$ by Lemma \ref{invlemma} below.  If $r=3$, then the lines $L_1, L_2, L_3$ must be in general position (otherwise, one can project from $\mathbb{P}^2\setminus D$ to $\mathbb{P}^1\setminus \{\text{three points}\}$ and apply Siegel's theorem to deduce that integral points on $\mathbb{P}^2\setminus D$ are not Zariski-dense).  Then after possibly replacing $k$ by a finite extension again and up to an automorphism of $\mathbb{P}^2$ over $k$, $\phi^6$ and $D$ have one of the following forms:
\begin{enumerate}[label=(\arabic*)]
\item \label{eq:oneline} $D$ is the line $z=0$ and
\begin{align*}
\phi^6=[F(X,Y,Z): G(X,Y,Z): Z^{d^6}]
\end{align*}
for some homogeneous polynomials $F(X,Y,Z), G(X,Y,Z)\in k[X,Y,Z]$ of degree $d^6$.
\item \label{eq:twolines} $D$ is defined by $yz=0$ and
\begin{align*}
\phi^6=[F(X,Y,Z): Y^{d^6}: Z^{d^6}]
\end{align*}
for some homogeneous polynomial $F(X,Y,Z)\in k[X,Y,Z]$ of degree $d^6$.
\item \label{eq:threelines} $D$ is defined by $xyz=0$ and
\begin{align*}
\phi^6=[X^{d^6}: Y^{d^6}: Z^{d^6}].
\end{align*}
\end{enumerate}
In case \ref{eq:threelines}, each point in the orbit of $[3:2:1]$ under $\phi^6$ is $(D,S)$-integral for $S = \{\infty, 2,3\}$, and it is easy to see from the $2$-adic and the $3$-adic valuations that an algebraic curve can only contain finitely many points in this orbit.  For cases \ref{eq:oneline} and \ref{eq:twolines}, it follows from a recent work of Xie \cite{xie} that there exists an algebraic point whose orbit under $\phi^6$ is Zariski-dense (if there exists $\ell\ge 1$ such that the two eigenvalues of the tangent map at a fixed point of $\phi^{6\ell}$ are multiplicatively independent, we can invoke \cite[Corollary 2.7]{abr} instead).  By enlarging $k$ and $S$ so that all coordinates of this point are $S$-units and that all the coefficients of $F$ and $G$ lie in $\O_{k,S}$, we see that each point in this orbit is $(D,S)$-integral.
Therefore, if case \eqref{mtheoremP} of Theorem \ref{mtheorem} never occurs, we have proved, under Lang--Vojta's conjecture, a full generalization of Silverman's theorem to $\mathbb{P}^2$, completely characterizing endomorphisms of $\mathbb{P}^2$ and divisors $D$ for which there exists an orbit of $\phi$ with a Zariski-dense set of $(D,S)$-integral points.

We also note that a general endomorphism does not have any totally ramified points.  Even for an endomorphism with a totally ramified point, a general divisor $D$ will not satisfy \eqref{mtheoremC} or \eqref{mtheoremP}.

We will first discuss several lemmata which will be used in the proof.  We begin by recalling a special case of a result of Vojta \cite[Corollary 2.4.3]{vojta}.


\begin{lemma}
\label{Vlemma}
Let $S$ be a finite set of places of a number field $k$.  Let $D$ be an effective divisor on $\mathbb{P}^n$ defined over $k$.  If $D$ has at least $n+2$ irreducible components (over $\overline k$) then any set of $(D,S)$-integral points is not Zariski dense in $\mathbb{P}^n$.
\end{lemma}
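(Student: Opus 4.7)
The plan is to invoke Schmidt's Subspace Theorem (in Vojta's formulation \cite[Corollary 2.4.3]{vojta}) via a Veronese embedding. First, I would reduce to the case where $D = D_1 + \cdots + D_{n+2}$ is a sum of exactly $n+2$ distinct irreducible hypersurfaces, with defining homogeneous polynomials $f_i$ of degrees $d_i$. Setting $d$ to be a common multiple of $d_1, \ldots, d_{n+2}$, the sections $g_i := f_i^{d/d_i}$ all have the same degree $d$. Under the Veronese embedding $v_d: \mathbb{P}^n \hookrightarrow \mathbb{P}^N$ (with $N = \binom{n+d}{n}-1$), each $g_i$ corresponds to a hyperplane $H_i \subset \mathbb{P}^N$ whose restriction to $X := v_d(\mathbb{P}^n)$ equals $(d/d_i) D_i$ viewed inside $X$.

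By functoriality of heights, a set of $(D,S)$-integral points on $\mathbb{P}^n$ corresponds, via $v_d$, to a set of $(H,S)$-integral points on $X$, where $H = H_1 + \cdots + H_{n+2}$. I would then apply Schmidt's Subspace Theorem in its generalized form for subvarieties (due to Ru--Vojta and Evertse--Ferretti): for a smooth projective subvariety $X \subset \mathbb{P}^N$ of dimension $n$ and $q \geq n+2$ hyperplanes in general position with respect to $X$, the set of $(H_1+\cdots+H_q, S)$-integral points on $X$ is not Zariski-dense in $X$. Since $\dim X = n$ and we have exactly $n+2$ hyperplanes $H_i$, pulling back via $v_d$ would yield Zariski non-density of the $(D, S)$-integral points on $\mathbb{P}^n$.

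The main obstacle lies in verifying the hypothesis that the $H_i$ be in general position with respect to $X$: this translates to a condition on the intersection configuration of the $D_i$ on $\mathbb{P}^n$, essentially requiring that no $n+1$ of the $D_i$ share an essential common intersection on $X$. When the $D_i$ fail to be in general position (for instance, when many of them pass through a common point), I would proceed by induction on $n$. The base case $n=1$ is Siegel's theorem applied to $\mathbb{P}^1 \setminus \{3 \text{ points}\}$. For $n \geq 2$, one blows up the degenerate locus and analyzes the pullback of $D$ on the blowup, which retains enough irreducible components (together with exceptional components) that the inductive hypothesis applies on a suitable lower-dimensional subvariety; composing with the birational projection back to $\mathbb{P}^n$ yields Zariski non-density of the original integral points.
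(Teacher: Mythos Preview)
The paper does not prove this lemma at all: it simply quotes Vojta's \cite[Corollary 2.4.3]{vojta} as a known result, noting afterwards that Vojta's underlying argument (his Theorem~2.4.1) is a \emph{reduction to $S$-unit equations}, not the Subspace Theorem. So your citation of ``Schmidt's Subspace Theorem (in Vojta's formulation \cite[Corollary 2.4.3]{vojta})'' is a misreading --- that corollary \emph{is} the lemma you are trying to prove, and its proof in Vojta's book goes through unit equations.

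Your actual strategy (Veronese embedding, then Evertse--Ferretti/Ru--Vojta) is a genuinely different route and would work cleanly \emph{if} the $D_i$ were in general position. The gap is exactly where you put it: when the $D_i$ are not in general position, your inductive sketch (``blow up the degenerate locus \ldots\ the inductive hypothesis applies on a suitable lower-dimensional subvariety'') is not a proof. After blowing up, you are no longer on $\mathbb{P}^n$, so the inductive statement as phrased does not apply; and it is not clear which lower-dimensional subvariety you mean, why integral points would be forced onto it, or why it carries $n+1$ independent boundary components. Making this precise is essentially as hard as the original problem.

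By contrast, Vojta's unit-equation argument bypasses general position entirely. Since $\Pic(\mathbb{P}^n)\cong\mathbb{Z}$, any two of the $D_i$ are linearly dependent in $\Pic$, so one obtains rational functions with divisor supported on $D$; evaluated at integral points these become $S$-units, and with $n+2$ components one gets enough multiplicative relations to feed into the $S$-unit equation theorem (Evertse, van der Poorten--Schlickewei) and force the points into a proper subvariety. This is both shorter and requires no positional hypotheses on the components. If you want to keep your Subspace-Theorem approach, you would need either to prove a version of Evertse--Ferretti without general position (hard), or to give a genuine reduction argument --- but at that point the unit-equation route is simpler.
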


More generally, using a reduction to unit equations, Vojta \cite[Theorem~ 2.4.1]{vojta} proved a result for an arbitrary nonsingular projective variety $X$, replacing the quantity $n+2$ in the lemma by $\dim X+\rho +r+1$, where $\rho$ is the Picard number of $X$ and $r$ is the rank of $\Pic^0(X)(k)$.  Using results on integral points on closed subvarieties of semiabelian varieties, this quantity was subsequently improved by Vojta \cite[Corollary 0.3]{vojta_semiabel1} to $\dim X - h^1(X,\mathscr O_X) + \rho + 1$.


The next lemma is an immediate corollary of known results for the logarithmic Kodaira dimension of the complement of an irreducible plane curve.

\begin{lemma}\label{intlemma}
Assume Lang--\color{black}Vojta's Conjecture for $\mathbb{P}^2$.  Let $D=C$ be a geometrically irreducible curve in $\mathbb{P}^2$ defined over a number field $k$, and let $S$ be a finite set of places of $k$ containing the archimedean places.  If there exists a Zariski dense set of $(D,S)$-integral points, then either $C$ is a nonsingular cubic curve or $C$ is a rational curve with at most two singular points.
\end{lemma}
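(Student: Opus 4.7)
My plan is to argue by contrapositive, using Wakabayashi's theorem (Theorem \ref{thm:waka}) together with Lang--Vojta's conjecture. Specifically, I would assume that $C$ is a geometrically irreducible plane curve which is neither a nonsingular cubic nor a rational curve with at most two singular points, and show that, under Lang--Vojta, any set of $(D,S)$-integral points on $\mathbb{P}^2\setminus C$ is forced to be Zariski non-dense.

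The first step is to pass to $\overline{k}$: since $C$ is geometrically irreducible, the base change $C_{\bar k}$ is still irreducible, and the log Kodaira dimension $\kbar(\mathbb{P}^2\setminus C)$ is invariant under base change to an algebraically closed extension. This places us in the setting of Theorem \ref{thm:waka}. The contrapositive of that theorem says that if $C$ is not a nonsingular cubic and not a rational curve with at most two singular points, then $\kbar(\mathbb{P}^2\setminus C) \ge 2$, and since $\kbar$ is bounded above by the dimension, we in fact get $\kbar(\mathbb{P}^2\setminus C) = 2$. Thus $\mathbb{P}^2\setminus C$ is a surface of log general type defined over $k$.

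The second step is to invoke Lang--Vojta's conjecture (Conjecture \ref{VC}) for the log general type variety $\mathbb{P}^2\setminus C$: every set of $S$-integral points on $\mathbb{P}^2\setminus C$ must be Zariski non-dense. This contradicts the hypothesis of the lemma, completing the contrapositive. I do not anticipate any real obstacle; the lemma is essentially a direct repackaging of Wakabayashi's geometric classification combined with the Diophantine prediction supplied by Lang--Vojta, with only a routine base change argument needed to bridge the two.
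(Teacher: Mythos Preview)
Your proposal is correct and follows essentially the same approach as the paper's proof: combine Wakabayashi's classification (Theorem \ref{thm:waka}) with the Lang--Vojta conjecture to rule out $\kbar(\mathbb{P}^2\setminus C)=2$. The only difference is that you phrase the argument contrapositively and make the base change step explicit, which the paper leaves implicit.
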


\begin{proof}
By the Lang--Vojta conjecture, integral points are never Zariski-dense when $\kbar(\mathbb{P}^2\setminus C) = 2$.  Therefore, the lemma follows from the result of Wakabayashi (Theorem \ref{thm:waka}).
\end{proof}

\begin{lemma}
\label{pblemma}
Let $\phi$ be an endomorphism of a projective variety $X$, both defined over a number field $k$.  Let $D$ be a nontrivial effective divisor on $X$ defined over $k$.  Let $S$ be a finite set of places of $k$ containing the archimedean places.  Suppose that $P\in X(k)$ and $\O_{\phi}(P)$ contains a Zariski dense set of $(D,S)$-integral points on $X$.  Then for any positive integer $n$, $\O_{\phi}(P)$ contains a Zariski dense set of $((\phi^n)^*D,S)$-integral points on $X$.
\end{lemma}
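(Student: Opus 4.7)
The plan is to transfer a Zariski-dense set of $(D,S)$-integral points inside $\O_\phi(P)$ into a Zariski-dense set of $((\phi^n)^*D,S)$-integral points in the same orbit, by pairing two ingredients: (i) the orbit is $\phi^n$-invariant (indeed $\phi^n$ just shifts it by $n$), and (ii) integrality with respect to $(\phi^n)^*D$ is equivalent, via functoriality of local heights, to integrality of $\phi^n(Q)$ with respect to $D$.

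First I would note that since $D$ is nontrivial we have $\dim X \geq 1$, so any Zariski-dense subset of $\O_\phi(P)$ must be infinite. This forces the iterates $\phi^i(P)$, $i \geq 0$, to be pairwise distinct, for otherwise the orbit would be eventually periodic and finite, hence not Zariski-dense. In particular, the map $i\mapsto \phi^i(P)$ is a bijection $\mathbb{N}\to \O_\phi(P)$.

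Next, given a Zariski-dense set $R \subseteq \O_\phi(P)$ of $(D,S)$-integral points, I would set
\[
R' = \{Q \in \O_\phi(P) : \phi^n(Q) \in R\}.
\]
Under the index bijection above, $R'$ corresponds to shifting the index set of $R$ down by $n$, so the symmetric difference $R\triangle R'$ is finite (of size at most $2n$). Consequently, $R'$ is still Zariski-dense in $X$.

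Finally I would apply functoriality of local heights for the morphism $\phi^n$: for every $Q \in R'$, since $\phi^n(Q) \in R$ lies outside $\Supp D$, we have $Q \notin \Supp((\phi^n)^*D)$, and
\[
\lambda_v((\phi^n)^*D, Q) = \lambda_v(D, \phi^n(Q)) + O_v(1),
\]
with $O_v(1) = 0$ for all but finitely many $v$. Summing over $v\in M_k\setminus S$ and using that $\phi^n(Q)\in R$ is $(D,S)$-integral shows that the partial height sum is uniformly bounded on $R'$, so $R'$ is $((\phi^n)^*D,S)$-integral. The main obstacle is essentially nonexistent; the argument is bookkeeping between functoriality and the combinatorial fact that an infinite subset of a Zariski-dense orbit remains Zariski-dense after removing finitely many points.
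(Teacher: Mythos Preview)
Your overall strategy matches the paper's: define $R'=\{Q\in\O_\phi(P):\phi^n(Q)\in R\}$ and use functoriality of heights to see that $R'$ is a set of $((\phi^n)^*D,S)$-integral points. Your handling of the integrality via $\lambda_v((\phi^n)^*D,Q)=\lambda_v(D,\phi^n(Q))+O_v(1)$ is fine and essentially equivalent to the paper's route through the global height.

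However, your argument for Zariski-density of $R'$ contains a genuine error. You assert that ``the symmetric difference $R\triangle R'$ is finite (of size at most $2n$).'' This is false in general: if $I\subset\mathbb{N}$ is the index set of $R$, then the index set of $R'$ is $(I-n)\cap\mathbb{N}$, and $I\triangle\bigl((I-n)\cap\mathbb{N}\bigr)$ need not be finite. For instance, with $n=1$ and $I=2\mathbb{N}$ one gets $I'=2\mathbb{N}+1$, so $I\triangle I'=\mathbb{N}$. The correct observation is that $\phi^n(R')=R\setminus\{P,\phi(P),\ldots,\phi^{n-1}(P)\}$, which differs from $R$ by at most $n$ points and is therefore still Zariski-dense. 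From this you conclude that $R'$ is Zariski-dense: if $\overline{R'}=Z\subsetneq X$, then since $\phi^n$ is a morphism of projective varieties it is proper, so $\phi^n(Z)$ is closed, and $\dim\phi^n(Z)\le\dim Z<\dim X$; but $\phi^n(R')\subset\phi^n(Z)$ is dense in $X$, a contradiction. (The paper itself is terse here and simply writes ``the result follows,'' but this is the substance behind that phrase.)
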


\begin{proof}
By functoriality of local and global height functions with respect to pullbacks by morphisms, we have
\begin{align*}
\sum_{v\in M_k\setminus S} \lambda_v((\phi^n)^*D, Q) &= h((\phi^n)^*D, Q) - \sum_{v\in S} \lambda_v((\phi^n)^*D, Q)+O(1)\\
&= h(D, \phi^n(Q)) - \sum_{v\in S} \lambda_v(D, \phi^n(Q))+O(1)\\
& = \sum_{v\in M_k \setminus S} \lambda_v(D, \phi^n(Q))+O(1)
\end{align*}
for all $Q\in X(k)$. Let $R$ be a set of $(D,S)$-integral points in $\O_{\phi}(P)$.  Then by the above calculation, the set
\begin{align*}
R_n=\{Q\in \O_{\phi}(P)\mid \phi^n(Q)\in R\}
\end{align*}
is a set of $((\phi^n)^*D,S)$-integral points in $\O_{\phi}(P)$, and the result follows.
\end{proof}

The following is a theorem in holomorphic dynamics on $\mathbb C \pp^2$:

\begin{lemma}
\label{invlemma}
Let $\phi:\mathbb{P}^2\to \mathbb{P}^2$ be a morphism (defined over $\mathbb{C}$).  If $C\subset\mathbb{P}^2$ is a finite union of algebraic curves such that $\phi^{-1}(C)=C$ as sets, then $C$ is a union of three or fewer lines.
\end{lemma}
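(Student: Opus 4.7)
The approach is to combine a ramification computation on $\pp^2$ with classical holomorphic-dynamics input. Throughout we may assume the polynomial degree $d$ of $\phi$ is at least $2$ (otherwise $\phi$ is an automorphism and the statement is trivial after noting that $\phi^{-1}(C)=C$ already forces $C$ to be very small). The plan is first to bound $\deg C$ by three via Riemann--Hurwitz, and then to force each component to be a line.

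First I would reduce to the case that each irreducible component of $C$ is individually $\phi$-invariant. Since $\phi$ is finite and $\phi^{-1}(C)=C$ as sets, $\phi$ permutes the components of $C$; replacing $\phi$ by a suitable iterate (which satisfies the same hypothesis) we may assume $\phi(D_j)=D_j$ for every irreducible component $D_j$ of $C$. Then $\phi^{-1}(D_j)=D_j$ as a set, so $\phi^* D_j = e_j D_j$ as Weil divisors. Comparing $\phi^* D_j \equiv d\cdot(\deg D_j)H$ with $e_j D_j \equiv e_j(\deg D_j)H$ forces $e_j = d$. The ramification divisor $R$, determined by $K_{\pp^2}=\phi^* K_{\pp^2}+R$, is linearly equivalent to $3(d-1)H$ and contains $(e_j-1)D_j = (d-1)D_j$ for each $j$. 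Summing and taking degrees,
\[
(d-1)\sum_j \deg D_j \,\le\, \deg R \,=\, 3(d-1),
\]
so $\deg C=\sum_j \deg D_j \le 3$.

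It remains to prove that each component $D_j$ is actually a line. Applying Riemann--Hurwitz to the induced degree-$d$ self-map on the normalization $\widetilde{D}_j$ gives $(d-1)(2g(\widetilde{D}_j)-2)\le 0$, hence $g(\widetilde{D}_j)\le 1$. If $g(\widetilde{D}_j)=1$, then the arithmetic-genus formula combined with $\deg D_j \le 3$ forces $D_j$ to be a smooth plane cubic; in that case the ramification inequality is an equality $R=(d-1)D_j$, so $\phi$ restricts to a finite connected étale self-cover of $\pp^2\setminus D_j$ of degree $d^2\ge 4$, contradicting Zariski's calculation $\pi_1(\pp^2\setminus D_j)\cong \zz/3$ (whose connected finite étale covers have degree at most $3$).

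The real obstacle is excluding the remaining rational cases---a smooth conic, or a nodal or cuspidal rational cubic---since the residual ramification outside $C$ blocks a direct fundamental-group argument of the type used for the smooth cubic. Here I would appeal to the classical theorem of Forn\ae ss--Sibony (with other proofs by Cerveau--Lins Neto, and a generalization to $\pp^n$ by Dinh) asserting that every totally invariant algebraic hypersurface of a holomorphic endomorphism of $\pp^n$ of degree $\ge 2$ is a union of hyperplanes. This holomorphic-dynamics input is the main difficulty in the proof; the rest is routine intersection theory and Hurwitz.
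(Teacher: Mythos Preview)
Your proposal is correct, and in the end it rests on the same citations the paper uses: the paper's proof is simply to invoke Forn\ae ss--Sibony \cite{FS1} (which gives $\deg C\le 3$ and rules out every non-linear configuration except a single smooth conic) and then Cerveau--Lins Neto \cite{CLN} (which rules out the conic). Your preliminary self-contained arguments---the ramification-divisor bound $\deg C\le 3$, the genus bound via Riemann--Hurwitz on the normalization, and the $\pi_1$ argument eliminating the smooth cubic---are all correct, but they become redundant once you invoke, at the final step, the full Forn\ae ss--Sibony/Cerveau--Lins Neto/Dinh theorem that \emph{every} totally invariant hypersurface is a union of hyperplanes: that theorem is exactly the lemma you are trying to prove, so citing it renders the earlier work unnecessary. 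If you want the preliminary work to genuinely lighten the citation load, you should instead cite only the pieces you still need after your own reductions (e.g., the conic and singular-cubic cases), as the paper does; otherwise it is cleanest to cite the full result in one line and omit the preliminaries.
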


\begin{proof}
Forn\ae ss and Sibony \cite[Proposition 4.2]{FS1} show that such a curve must have degree $\le 3$, and then remove all non-linear possibilities, except for a conic.  Cerveau and Lins Neto \cite[Th\'eor\`em 2]{CLN} remove the conic possibility.
\end{proof}

The final lemma we will use is a simple consequence of the chain rule:

\begin{lemma}
\label{singlemma}
Let $\phi:\mathbb{P}^2\to \mathbb{P}^2$ be a morphism.  If $C\subset\mathbb{P}^2$ is a curve and $P$ is a singular point of $C$, then $\phi^{-1}(C)$ is singular at every point in $\phi^{-1}(P)$.
\end{lemma}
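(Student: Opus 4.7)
The proof is really just a local computation via the chain rule, as the statement suggests. Here is the plan.

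Fix $Q\in\phi^{-1}(P)$ and work in affine charts around $P$ and $Q$. Choose a local defining equation $f\in\mathcal O_{\mathbb P^2,P}$ of $C$ at $P$. Since $\phi$ is a finite morphism (it has degree $>1$ in our setting, and in any case a nonconstant morphism of $\mathbb P^2$ to itself is surjective), the image of $\phi$ is not contained in $C$, so $\phi^*f=f\circ\phi$ is a nonzero element of $\mathcal O_{\mathbb P^2,Q}$ and is a local defining equation of $\phi^{-1}(C)$ at $Q$.

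Recall that a point $P$ is singular on the divisor $C$ precisely when its local defining equation lies in $\mathfrak m_P^2$, equivalently, when all first-order partial derivatives of $f$ vanish at $P$. Since $P$ is assumed singular on $C$, we have $(\partial f/\partial x_i)(P)=0$ for $i=1,2$. The chain rule then gives
\[
\frac{\partial(\phi^*f)}{\partial y_j}(Q)=\sum_{i=1}^{2}\frac{\partial f}{\partial x_i}(P)\cdot\frac{\partial \phi_i}{\partial y_j}(Q)=0
\]
for each local coordinate $y_j$ at $Q$, where $\phi=(\phi_1,\phi_2)$ in the chosen charts. Thus $\phi^*f\in\mathfrak m_Q^2$, so $Q$ is a singular point of $\phi^{-1}(C)$.

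There is no real obstacle; the only thing to note is that we should verify $\phi^*f$ genuinely serves as a local equation at $Q$ (i.e., $\phi$ does not collapse a neighborhood of $Q$ into $C$), which is automatic once we know $\phi$ is surjective. After that, the chain rule does all the work, and the argument proceeds uniformly for every preimage $Q\in\phi^{-1}(P)$.
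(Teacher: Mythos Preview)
Your argument is correct and is precisely the chain-rule computation the paper alludes to; the paper itself gives no further details beyond calling the lemma ``a simple consequence of the chain rule,'' and you have supplied exactly those details. The only point worth noting is that your proof (like the paper's intended one) shows the \emph{scheme-theoretic} preimage $\phi^{-1}(C)$ is singular at $Q$, which is the correct reading of the lemma and suffices for the application in Theorem~\ref{mtheorem}.
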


\begin{proof}[Proof of Theorem \ref{mtheorem}]
We first note that the bound $r\leq 3$ follows immediately from combining Lemma \ref{Vlemma} and Lemma \ref{pblemma}.  From the definition of $r$, it also follows immediately that each $C_i$ is geometrically irreducible.  
If $\deg C_{i+1}\neq d\deg C_i$, then it must be that $C_{i+1}$ is in the ramification locus of $\phi$.  If this happens for infinitely many $i$, then since the ramification locus contains only finitely many curves, we must have $\phi^{-m}(C_j)=C_j$ for some $j$ and some $m$ (by taking multiples, we may assume that $m>j$).   It follows that $C = \phi^j(C_j) \subseteq \phi^{-(m-j)}(C_j)$, and since $\phi^{-(m-j)}(C_j)$ is irreducible, in fact we have equality.  Therefore, $\phi^{-m}(C) = C$, and $C$ is a line by Lemma \ref{invlemma}. So we are in case \eqref{mtheoremC}.  Hence we may now assume that $\deg C_{i+1}=d \deg C_i$ for all sufficiently large $i$.  By Lemma \ref{intlemma} and Lemma \ref{pblemma}, each such $C_{i}$ must be rational and have $1$ or $2$ singular points.  Choose $i_0$ so that $C_{i_0}$ has the maximum number of singular points in the family of curves $C_i$.  Let $P_{i_0}$ be a singular point of $C_{i_0}$.  From the definition of $C_{i_0}$ and Lemma \ref{singlemma}, $\phi^{-1}(P_{i_0})$ must consist of a single point $P_{i_0+1}$, which is a singular point of $C_{i_0+1}$.  Continuing inductively, we define points $P_i\in C_i$, $i\geq i_0$, and we are in case \eqref{mtheoremP}.  Finally, \cite[Theorem 1]{amercamp} shows that there are at most $9$ points in $\mathbb{P}^2$ over which the preimage set is a singleton. So $\{P_i: i\ge i_0\}$ must be a finite set.
\end{proof}


\begin{proof}[Proof of Theorem \ref{thm:orbit}]
If \eqref{mtheoremC} of Theorem \ref{mtheorem} occurs, then $C$ is a completely invariant proper Zariski-closed subset of $\mathbb{P}^2$, while if \eqref{mtheoremP} occurs, then the finite set $\{P_{i_0}, P_{i_0+1}, \ldots \}$ is a completely invariant proper Zariski-closed subset of $\mathbb{P}^2$.
\end{proof}

\section{Examples}\label{sec:examples}

Here we collect together a series of examples, some of which demonstrate our theorems, while others extend our results in certain special cases.

We first discuss an example where $\kbar(\mathbb{P}^2\setminus D) = -\infty$.

\begin{example}
Let $D$ be Yoshihara's quintic: the zero locus of $F(X,Y,Z) = (YZ - X^2)(YZ^2 - X^2 Z - 2 XY^2) + Y^5$.  Then $\kbar(\mathbb{P}^2\setminus D) = -\infty$ and, as in \cite{miya_sugie}, the associated pencil $\Lambda$ of Theorem \ref{kbar_infty} has two multiple fibers: $2D$ and $5E$, where $E$ is the zero locus of $G(X,Y,Z) = YZ-X^2$.  Then in the notation of Theorem \ref{kbar_infty}, $D' = (FG=0)$, $r=2$, and integral points on $\pp^2\setminus D$ are potentially dense.  On the other hand, if $C$ is another member of the pencil, say defined by $F^2 + G^5$ (which is irreducible over $\overline \qq$), then the corresponding divisor $D'$ is given by $(FG(F^2+G^5)=0)$.  So $r=3$ and by Theorem \ref{kbar_infty}, any set of integral points on $\pp^2\setminus C$ is contained in an effectively computable (possibly reducible) curve.  This last statement is presumably not obvious from a purely Diophantine analysis approach, working na\"ively from equations involving $F^2 + G^5$.
\end{example}

Next, we provide an example of Theorem \ref{pencildegen} (c).
\begin{example}
Let $D$ be defined by $Z(Y^2 Z - X^3) = 0$.  As proved in \cite{iitaka_jpn}, $\kbar(\pp^2 \setminus D) = 1$, and so we are in the situation of Theorem \ref{thm:linecurve} (but we won't need the $abc$ conjecture).  The pencil $\Lambda$ generated by $Y^2 Z$ and $X^3$ has two base points, and for a general member $C$, $C\setminus D$ is isomorphic to $\mathbb{G}_m$.  Since $Z=0$ is contained in $D$, the gcd/Campana weight of $(D,\Lambda)$ is at least $1+ \frac 23 + \frac 12 > 2$. Therefore, Theorem \ref{pencildegen} (c) shows unconditionally that the set of $S$-integral points on $\pp^2\setminus D$ is Zariski-non-dense and effectively computable.  In this particular case, we can also prove this directly. An $S$-integral point in this case is asking for $[x:y:1]$ with $x,y\in\O_{k,S}$ such that $y^2 - x^3 \in \O_{k,S}^*$.  Choosing representatives $u_1,\ldots, u_\ell$ of $\O_{k,S}^*/(\O_{k,S}^*)^6$, we see that such a point induces an integral point on one of the finitely many elliptic curves $Y^2 = X^3 + u_j$.  Therefore, any such pair $(x,y)$ lies in one of finitely many effectively-computable curves.
\end{example}

An example that sits somewhere between Theorem \ref{pencildegen} (a) and Theorem \ref{pencildegen} (b) is the following.

\begin{example}
Let $D$ be the divisor defined by $Y^2 Z^3 - X^5 = 0$.  This is a bicuspidal curve with $\kbar(\mathbb{P}^2\setminus D) = 1$, so this also serves as an example of Theorem \ref{thm:bicuspidal} \eqref{bicuspidali} (note that this is listed as \eqref{eq:bicusp1} in Theorem \ref{tonobicusp} but it is not one of the curves listed in Theorem \ref{thm:bicuspidal} \eqref{bicuspidali}).  The Campana weight of the pencil generated by $Y^2 Z^3$ and $X^5$ is $(1-\frac 1{\min(2,3)}) + \frac 45 + 1 >2$, while the gcd-weight is $(1-\frac 1{\mathrm{gcd}(2,3)}) + \frac 45 + 1 < 2$.  Therefore, upon assuming the $abc$ conjecture, Theorem \ref{pencildegen} (a) implies the arithmetically interesting statement that for any finite set of primes $S$ in $\mathbb{Z}$, the set
\[
\{(a,b)\in \zz^2: a \text{ a powerful number, } b \text{ a fifth power, }  a-b \text{ not divisible by } p\notin S\}
\]
lies in a finite union of curves (in fact, in a finite union of lines passing through the origin). This example illustrates clearly the role of the $abc$ conjecture in analyzing integral points.
\end{example}

We now give an example of Theorem \ref {thm:bicuspidal} \eqref{bicuspidalii}.

\begin{example}
Let $D$ be defined by $F(X,Y,Z) = Y^{2\alpha + 1} + X(X Z + Y^2)^\alpha = 0$.  Given a unit $u\in \O_ {k,S}^*$ and a natural number $m$, we have \[ \left(\frac{u-1}{u^ {\alpha m}},\,\, 1,\,\, \frac{u^m-1}{u-1} u^{\alpha m}\right)\in \O_ {k,S}^3, \,\, \text{ and } \,\,  F\left(\frac{u-1}{u^{\alpha m}},\,\, 1, \,\, \frac{u^m-1}{u-1} u^{\alpha m}\right) = u. \]  Assuming that $\O_{k,S}^*$ is infinite, as $u$ and $m$ vary this yields a Zariski-dense set of $S$-integral points on the complement of $D$.  Note that $D$ belongs to the $n=1$ case of the third type listed in Theorem \ref{thm:bicuspidal} \eqref{bicuspidalii}.  When $n\ge 2$, the explicit description of integral points becomes more complicated, since our proof relies on Lemma \ref {lem:twopts}, which in turn uses the results of \cite{alvbilpou}.
\end{example}

We now discuss an example of a bicuspidal curve for which potential density of integral points can be shown unconditionally, but which is not covered in Theorem \ref{thm:bicuspidal} \eqref{bicuspidalii}.

\begin{example}\label{ex:cong}
Let  $D$ be defined by the polynomial $F(X,Y,Z) = Y^{3b+1} + X(X^2 Z + a X Y^2 + Y^3)^b$.  This belongs to the fourth type in Theorem \ref{thm:bicuspidal} \eqref{bicuspidali}.  The pencil $\Lambda$ generated by $Y^{3b+1}$ and $X(X^2 Z + a X Y^2 + Y^3)^b$ induces a $\mathbb{G}_m$-fibration on $\mathbb{P}^2\setminus D$ and $(D,\Lambda)$ has gcd/Campana weight $1 + \frac {3b}{3b+1} + (1-\frac 11) < 2$.  Now, assume that $a$ is a natural number.  Then for each $u\in \O_{k,S}^*$, there exists $t\in \O_{k,S}^*$ such that
\begin{equation}\label{eq:congruence}
t^{b+1} - t^b - a (u-1) \equiv 0 \pmod{(u-1)^2},
\end{equation}
namely $t = u^a = ((u-1) + 1)^a$.  Therefore, all the coordinates of the point
\[
P_u = \left(\frac{u-1}{t^b}, 1, \frac{t^b(t^{b+1} - t^b - a (u-1))}{(u-1)^2} \right)
\]
are in $\O_{k,S}$ and $F(P_u) = u$.  It follows that $P_u\in (\mathbb{P}^2\setminus D)(\O_{k,S})$, viewing $P_u$ in $\pp^2(k)$, and similar to the proof of Lemma \ref{lem:twopts},  Theorem 1.1 of \cite{alvbilpou} tells us that for all but finitely many $u$, the member of the pencil containing $P_u$  contains infinitely many integral points with respect to $D$. Assuming that $\O_{k,S}^*$ is infinite, by varying $u$, we conclude that $(\mathbb{P}^2\setminus D)(\O_{k,S})$ is Zariski-dense.
\end{example}

Similarly, a congruence relation can help us with some cases of the union of a line and an irreducible curve (cf. Theorem \ref{thm:linecurve}).

\begin{example}\label{ex:redcong}
Let $l = 2$ and $p(X,Z) = a X+  Z$ with $a\in \mathbb{N}$ in the first equation of Theorem \ref{thm:linecurve}, so that the divisor $D$ is defined by
\[
Z(Z^{1+3b} + X(X^2 Y + (aX+Z)Z^2)^b) = 0.
\]
For each $u\in \O_{k,S}^*$, we choose $t\in \O_{k,S}^*$ satisfying \eqref{eq:congruence}.  Then as before, the point $\left[\frac{u-1}{t^b}, \frac{t^b(t^{b+1} - t^b - a (u-1))}{(u-1)^2}, 1\right]$ is an $S$-integral point with respect to $D$.  Therefore, assuming that $\O_{k,S}^*$ is infinite, the same argument as in Example \ref{ex:cong} shows that $(\mathbb{P}^2\setminus D)(\O_{k,S})$ is Zariski-dense.
\end{example}

Another example of a union of a line and an irreducible curve for which potential density of integral points can be shown unconditionally is the following.

\begin{example}
Let $D$ be the union of a nodal cubic and a non-tangent line going through the singular point, say $D = (Y(Y^2 Z - X^3 - X^2 Z) = 0)$.  After one blowup, the boundary divisor becomes normal-crossings, and it is easy to see that $\kbar(\pp^2\setminus D) = 1$.   Interchanging the roles of $Y$ and $Z$, the affine equation on $\mathbb{A}^2$ is $(1-x^2) (y+x) - x = 0$, so this is case (iii) of Theorem \ref{Aoki} and the second possibility in Theorem \ref{thm:linecurve}.  Going back to the original notation, any line through the singular point $[0:0:1]$ other than $Y=0$ and $Y=\pm X$ meets $D$ at exactly one other point.  So these lines form a pencil $\Lambda$ such that for a general member $C\in \Lambda$, we have $C\setminus D\cong \mathbb{G}_m$ over $\overline{\mathbb{Q}}$.  We also have the line $X=-Z$ which does not pass through the singular point, but meets $D$ at only $[-1:0:1]$ and $[0:1:0]$.  Therefore, this line may serve as the $C_0$ of Lemma \ref{lem:twopts}.
\end{example}

We end with an example for which we can unconditionally conclude Zariski-density of integral points, but which is not a part of Theorems \ref{thm:bicuspidal}--\ref{thm:linecurve}.

\begin{example}\label{ex:reducible}
  Let $D$ be the sum of a conic and two non-tangent lines meeting at a common point $P$ on the conic. For example, we can take the divisor $D$  defined by $(X^2-Y^2)(YZ - X^2) = 0$.  By blowing up once at $P$, the boundary divisor $D$ becomes a normal-crossings divisor, and it is easy to see that $\kbar (\pp^2 \setminus D) = 1$.  Any line through $[0:0:1]$ meets $D$ in just two points, yielding a pencil as in Lemma \ref{lem:twopts}. Moreover, the tangent line to the conic at $[1:1:1]$ also meets $D$ in just two points and this serves as the $C_0$ of Lemma \ref{lem:twopts}.
\end{example}

\section{Further Questions}

For integral points on $\pp^2\setminus D$, the obvious question left to study is the remainder of the cases when $\kbar(\mathbb{P}^2\setminus D) = 1$.  For the cases listed in Remark \ref{rem:intnotdone}, we have not determined whether integral points are potentially dense or not.  We believe that in these cases, the Campana weight of the pencil constructed from the structure theory of affine surfaces will be less than or equal to $2$.
In some specific cases, such as the examples mentioned in Section \ref{sec:examples}, we can prove potential density of integral points, but it seems one needs a more thorough classification theory of affine surfaces and more involved arithmetic and Diophantine analyses to proceed further.

One particular avenue of interest involves extending and generalizing the congruence method of Examples \ref{ex:cong} and \ref{ex:redcong}.  In these examples, when $a$ is a natural number we were able to reduce the construction of a Zariski dense set of integral points to a congruence condition \eqref{eq:congruence}.  For a general $a\in \overline{\qq}$, it is possible that one may again be able to construct integral points using a congruence condition, rather than using a geometric method as in  Lemma \ref{lem:twopts}.  For example, assume that $k$ is a number field with a finite set of places $S$ such that there are infinitely many $u\in \O_{k,S}^*$ for which $(u-1)\O_{k,S}$ is a product of primes lying over distinct primes of $\mathbb{Z}_S$ that split completely in $\O_{k,S}$.  Then for each such $u$, there exists an $N$ such that $\O_{k,S}/ (u-1)\O_{k,S} \cong \mathbb{Z}/N\mathbb{Z}$.  Let $a\in k$.  Enlarging $S$, we can assume that $a\in \O_{k,S}$.  Then there exist a natural number $m$ and $\alpha \in\O_{k,S}$ such that $a = m+ \alpha (u-1)$.  Therefore,
\[
t^{b+1} - t^b - a (u-1) \equiv t^{b+1} - t^b - m(u-1) \equiv 0 \pmod{(u-1)^2},
\]
and we can argue as in Example \ref{ex:cong} to construct a Zariski-dense set of integral points.  As a case in point, if $p=2^n-1$, $n\geq 3$, is a (rational) prime, then $p$ splits completely in $\O_{\mathbb{Q}(\sqrt{2})}$ and $(\sqrt 2)^n - 1$ generates a prime ideal in $\O_{\mathbb{Q}(\sqrt{2})}$ lying over $p$.  So the assumption made above is satisfied for $k = \mathbb{Q}(\sqrt 2)$ if there are infinitely many Mersenne primes (taking $S$ to consist of the archimedean places and the place above the $2$-adic place).


As for integral points in orbits, we would like to further refine Question \ref{q:comp_inv}. From \cite[Proposition 15]{yasu_taiwan}, one can have an endomorphism of $\mathbb{P}^2$ with a completely invariant singleton set and yet integral points in every orbit are Zariski non-dense in $\pp^2$.  Therefore, it is natural to ask: for endomorphisms on $\pp^2$, can one conclude that there must be a one-dimensional completely invariant Zariski-closed subset instead of just a nonempty completely invariant proper Zariski-closed subset in Question \ref{q:comp_inv}?  This is related to determining whether \eqref{mtheoremP} of Theorem \ref{mtheorem} (ii)
can actually occur.  

\section*{Acknowledgments}

The authors would like to thank the Centre International de Rencontres Math\'ematiques (CIRM) in Luminy and the organizers of the conference ``Autour des conjectures de Lang et Vojta", which took place there, for providing the opportunity for stimulating discussions that formed the foundation of this work.

\bibliographystyle{amsplain}
\bibliography{P2Orbits2}

Aaron Levin
\smallskip

Department of Mathematics

Michigan State University

619 Red Cedar Road

East Lansing, MI 48824

USA

\medskip

\texttt{adlevin@math.msu.edu}

\bigskip

Yu Yasufuku
\smallskip

Department of Mathematics

College of Science and Technology

Nihon University

1-8-14 Kanda-Surugadai, Chiyoda-ku 

101-8308 Tokyo

Japan
\medskip

\texttt{yasufuku@math.cst.nihon-u.ac.jp}


\end{document}